\newtheorem{theorem}{Theorem}
\newtheorem{lemma}
           {Lemma}
\newtheorem{proposition}
           {Proposition}
           {Corollary}
           {Definition}
           {Exercise}
\newtheorem{example}
           {Example}
\newtheorem{remark}
           {Remark}
\newcommand{\e}{\mathbb{E}}
\newcommand{\IR}{{\mathbb R}}
\newcommand{\IN}{{\mathbb N}}
\newcommand{\IE}{{\mathbb E}}
\newcommand{\IO}{{\mathbb O}}
\newcommand{\norm}[1]{ \ensuremath { \| {#1} \| } }
\newcommand{\inner}[2]{ \ensuremath { \langle {#1},{#2} \rangle } }
\newenvironment{Proof of lemma}
               {\noindent{\bf Proof of Lemma}}{\hfill$\Box$\newline}
\newenvironment{Proof of theorem}{\noindent{\bf Proof of Theorem}}
               {\hfill\scriptsize{$\Box$}\newline}
\newenvironment{Proof of theorems}
               {\noindent{\bf Proof of Theorems}}{\hfill$\Box$\newline}
\newenvironment{Proof of proposition}
               {\noindent{\bf Proof of Proposition}}{\hfill$\Box$\newline}
\newenvironment{Proof of propositions}
               {\noindent{\bf Proof of Propositions}}{\hfill$\Box$\newline}
\newenvironment{Proof of exercise}
               {\noindent{\it Proof of Exercise:}}{\hfill$\Box$}
\newenvironment{Acknowledgements}
               {\noindent{\bf Acknowledgements.}}
\numberwithin{equation}{section}
\begin{document}

\title{Improved H\"older and reverse H\"{o}lder inequalities 
       for Gaussian random vectors.}

 \author[W.-K. Chen]{Wei-Kuo Chen}
\address{W.-K. Chen \\
         Department of Mathematics\\
         University of Chicago\\
         5734 S. University Avenue\\
         Chicago, IL 60637, USA}
\email{wkchen@math.uchicago.edu}

 \author[N. Dafnis]{Nikos Dafnis}
\address{N. Dafnis\\
         Department of Mathematics\\
         Texas A\&M University\\
         College Station, TX 77843, USA}
\email{nikdafnis@gmail.com} 
 
 \author[G. Paouris]{Grigoris Paouris}
\address{G. Paouris\\
         Department of Mathematics\\
         Texas A\&M University\\
         College Station, TX 77843, USA}
\email{grigorios.paouris@gmail.com}


\date{\today}
\begin{abstract}
We propose algebraic criteria that yield sharp H\"{o}lder types of inequalities for 
the product of functions of Gaussian random vectors with arbitrary covariance structure. 
While our lower inequality appears to be new, we prove that the upper inequality gives 
an equivalent formulation for the geometric Brascamp-Lieb inequality for Gaussian 
measures. As an application, we retrieve the Gaussian hypercontractivity as well as its reverse 
and we present a generalization of the sharp Young and reverse Young inequalities. 
From the latter, we recover several known inequalities in literatures including 
the Pr\'{e}kopa-Leindler and Barthe inequalities.
\end{abstract}
\maketitle

\section{Introduction and main results}

Let $(X_1,X_2)$ be a centered bivariate normal random vector and $f_1,f_2$ be any 
nonnegative measurable functions on $\mathbb{R}.$ What are the good upper and lower 
bounds for the expectation $\e f_1(X_1)f_2(X_2)$? Suppose that $p_1$ and $p_2$ are 
H\"{o}lder's conjugate exponents, 
\begin{align}
\label{eq0}
\frac{1}{p_1}+\frac{1}{p_2}=1.
\end{align}
The H\"{o}lder and reverse H\"{o}lder inequalities state that regardless the 
covariance between $X_1$ and $X_2,$ one always has
\begin{equation}
\label{eq-2}
\mathbb E f_{1}(X_{1})f_2(X_2)
\leq (\e f_1(X_1)^{p_1})^{\frac{1}{p_1}}(\e f_2(X_2)^{p_2})^{\frac{1}{p_2}}
\end{equation}
if $p_1,p_2\geq 1$ and 
\begin{equation}
\label{eq-1}
\mathbb E f_{1}(X_{1})f_2(X_2)
\geq (\e f_1(X_1)^{p_1})^{\frac{1}{p_1}}(\e f_2(X_2)^{p_2})^{\frac{1}{p_2}}
\end{equation}
if $0<p_1< 1$ and $p_2<0$. In this paper, we are interested in searching improved 
two-sided bounds that are related to the covariance of $(X_1,X_2)$ and can be 
easily used as H\"{o}lder's inequalities.

Our main result is stated as follows. Recall that a real symmetric $N\times N$ 
matrix $A$ is called positive definite (semi-definite) and denoted by $A>0$ 
$(\geq 0)$ if the usual inner product $\left<Ax,x\right>>0$ $(\geq 0)$ for all 
nonzero $x\in\mathbb{R}^N.$ For two real symmetric $N\times N$ matrices $A,B$, 
we say $B>A$ if $B-A>0$ and $B\geq A$ if $B-A\geq 0.$ 

\begin{theorem}\label{main}
Let $m,n_1,\ldots,n_m$ be positive integers and let $N=n_1+\cdots+n_m.$ Suppose 
that $X_i$ is a $n_i$-dimensional random vector for $1\leq i\leq m$ such that 
their joint law,
$$
{\bf X}:=(X_1,\ldots,X_m),
$$
forms a centered jointly $N$-dimensional Gaussian random vector with covariance 
matrix $T=(T_{ij})_{1\leq i,j\leq m}$, where $T_{ij}$ is the covariance matrix 
between $X_i$ and $X_j$ for $1\leq i,j\leq m$. Let $P$ be the block diagonal matrix,
$$
P={\rm diag}(p_1T_{11},\ldots,p_mT_{mm}).
$$
For any set of nonnegative measurable functions $f_i$ on $\mathbb{R}^{n_i}$ for 
$1\leq i\leq m$, the following statements hold.
 \begin{itemize}
  \item[$(i)$]  If $T\leq P$, then
              \begin{equation}\label{main1}
                \IE \prod_{i=1}^m f_i(X_i)\leq
                \prod_{i=1}^m \Big(\IE f_i(X_i)^{p_i}\Big)^{\frac{1}{p_i}}.
              \end{equation}
  \item[$(ii)$] If $T\geq P$, then
              \begin{equation}\label{main2}
                \IE \prod_{i=1}^m f_i(X_i)\geq
                \prod_{i=1}^m \Big(\IE f_i(X_i)^{p_i}\Big)^{\frac{1}{p_i}}.
              \end{equation}
 \end{itemize}
\end{theorem}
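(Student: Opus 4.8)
The plan is to prove $(i)$ and $(ii)$ by a single interpolation argument along the Ornstein--Uhlenbeck semigroup, after a change of variables that trades the block structure for one Gaussian integral on a large space. First I would make the routine reductions. A truncation and mollification lets me assume each $f_i$ is smooth and bounded away from $0$ and from $\infty$; this keeps every quantity below finite, which matters in $(ii)$ when some $p_i<0$. Inspecting the $i$-th diagonal block of $P-T$ (resp. $T-P$) shows that $T\le P$ forces $p_i\ge 1$ and $T\ge P$ forces $p_i\le 1$, and after discarding degenerate directions I may take each $T_{ii}$ invertible; replacing $X_i$ by $T_{ii}^{-1/2}X_i$ — a congruence, hence harmless to $T\le P$ and $T\ge P$ — I may then assume $T_{ii}=I_{n_i}$ and $P=\operatorname{diag}(p_1I_{n_1},\dots,p_mI_{n_m})$. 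Finally I write $X_i=B_iZ$, where $Z$ is standard Gaussian on $\IR^r$, $r=\operatorname{rank}T$, and $B_i\colon\IR^r\to\IR^{n_i}$: since the $i$-th marginal is standard, each $B_i$ is a coisometry, $B_iB_i^{*}=I_{n_i}$, while $B_iB_j^{*}=T_{ij}$.

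Next I would set up the interpolation. Let $(P_s)_{s\ge 0}$ be the Ornstein--Uhlenbeck semigroup on $\IR^r$ with generator $\mathcal L=\Delta-\langle z,\nabla\rangle$, so that $P_0=\mathrm{Id}$ and $P_\infty g=\int g\,d\gamma_r$ against the standard Gaussian $\gamma_r$. Put $h_i=f_i^{p_i}$, $H_i=h_i\circ B_i$, fix a smooth increasing $s\colon[0,1]\to[0,\infty]$ with $s(0)=0$, $s(1)=\infty$, and define
\[
\Phi(t)=\int_{\IR^r}\prod_{i=1}^m\bigl(P_{s(t)}H_i\bigr)(z)^{1/p_i}\,d\gamma_r(z).
\]
The basic remark is the intertwining $P_s(h_i\circ B_i)=(P^{(i)}_s h_i)\circ B_i$ between the semigroup on $\IR^r$ and the one on $\IR^{n_i}$, valid because $B_i$ pushes $\gamma_r$ forward to $\gamma_{n_i}$; this is what lets one semigroup on $\IR^r$ drive all $m$ blocks at once. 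Hence $\Phi(0)=\int\prod_iH_i^{1/p_i}\,d\gamma_r=\IE\prod_i f_i(X_i)$, the left side of \eqref{main1}--\eqref{main2}, while $P_\infty H_i=\int h_i\,d\gamma_{n_i}=\IE f_i(X_i)^{p_i}$ is constant in $z$, so $\Phi(1)=\prod_i\bigl(\IE f_i(X_i)^{p_i}\bigr)^{1/p_i}$, the right side. Thus $(i)$ reduces to $\Phi'\ge 0$ on $[0,1]$ and $(ii)$ to $\Phi'\le 0$.

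The heart of the argument is the differentiation. Writing $w_i=P_{s(t)}H_i>0$, $g_i=\nabla\log w_i$, $\Psi=\prod_i w_i^{1/p_i}>0$, differentiating under the integral and a short integration by parts against $\gamma_r$ (using $\partial_t w_i=s'(t)\mathcal L w_i$ and the selfadjointness of $\mathcal L$) should give, after the expected cancellations,
\[
\Phi'(t)=-\,s'(t)\int_{\IR^r}\Psi\cdot Q\,d\gamma_r,\qquad Q=\Bigl|\sum_{i=1}^m\tfrac1{p_i}g_i\Bigr|^2-\sum_{i=1}^m\tfrac1{p_i}|g_i|^2 .
\]
Now the intertwining forces $g_i=B_i^{*}\tilde g_i$ for some $\tilde g_i\in\IR^{n_i}$; then $B_iB_i^{*}=I_{n_i}$ gives $|g_i|^2=|\tilde g_i|^2$ and $B_iB_j^{*}=T_{ij}$ gives $\langle g_i,g_j\rangle=\langle T_{ij}\tilde g_j,\tilde g_i\rangle$, so with $\tilde{\mathbf g}=(\tilde g_1,\dots,\tilde g_m)$,
\[
Q=\bigl\langle(P^{-1}TP^{-1}-P^{-1})\,\tilde{\mathbf g},\,\tilde{\mathbf g}\bigr\rangle=\bigl\langle(T-P)\,P^{-1}\tilde{\mathbf g},\,P^{-1}\tilde{\mathbf g}\bigr\rangle .
\]
Therefore $T\le P$ makes $Q\le 0$ pointwise and $T\ge P$ makes $Q\ge 0$ pointwise; since $s'>0$ and $\Psi>0$ this yields $\Phi'\ge 0$ in case $(i)$ and $\Phi'\le 0$ in case $(ii)$, finishing the proof.

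Two closing remarks. Testing the inequalities on $f_i(x)=e^{\langle b_i,x\rangle}$ — where $\IE\prod_i f_i(X_i)=e^{\langle Tb,b\rangle/2}$ and $\prod_i\|f_i(X_i)\|_{p_i}=e^{\langle Pb,b\rangle/2}$ with $b=(b_1,\dots,b_m)$ — shows that $T\le P$ (resp. $T\ge P$) is also necessary, which explains why $Q$ had to be governed exactly by $T-P$; and unwinding the reduction, $(i)$ is precisely the assertion that the geometric Brascamp--Lieb inequality for the Gaussian datum $(B_i,1/p_i)$ holds with constant one, matching the equivalence announced in the abstract. The hard part, I expect, is not conceptual but technical and lives in case $(ii)$: with negative exponents one must justify the finiteness of $\Phi(t)$ and the legitimacy of differentiating under the integral across the whole interval $[0,1]$ (and deal with degenerate covariances), which is exactly what the initial reduction to $f_i$ bounded away from $0$ and $\infty$ is designed to secure; once that is in place, the computation above is uniform in the two cases.
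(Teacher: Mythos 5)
Your proposal is correct and essentially reproduces the paper's second proof (Section 2.2, Theorem~\ref{thm.O-U} via the Ornstein--Uhlenbeck semigroup): the reduction to $T_{ii}=I_{n_i}$, the coisometry factorization $T=UU^*$ with $U_iU_i^*=I_{n_i}$, the monotone functional $a(t)=\int\prod(P_tf_i(U_ix))^{d_i}d\gamma_n$, and the integration-by-parts identity whose sign is governed exactly by $T-P$ all appear there in the same form. The only differences are cosmetic: you reparametrize time to $[0,1]$, you push the semigroup through the lift $H_i=f_i^{p_i}\circ B_i$ on $\IR^r$ and invoke the intertwining $P_s(h\circ B_i)=(P_s^{(i)}h)\circ B_i$ rather than working blockwise, and you write the key quadratic form directly as $\bigl\langle(T-P)P^{-1}\tilde{\mathbf g},P^{-1}\tilde{\mathbf g}\bigr\rangle$, which is the same thing as the paper's condition $UU^*\lessgtr D^{-1}\Leftrightarrow\bigl|\sum d_iU_i^*\xi_i\bigr|^2\lessgtr\sum d_i|\xi_i|^2$. (The paper also gives a first, independent proof by Gaussian integration by parts and an iteration scheme; your route does not touch that one.)
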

Here the right-hand sides of \eqref{main1} and \eqref{main2} adapt the convention that $\infty\cdot 0=0$ whenever such situation occurs, which will remain in force throughout the rest of the paper.

\begin{remark}\label{rmrk.Multi-Chen1}\rm
 Suppose that $0<\e f_i(X_i)^{p_i}<\infty$ for $1\leq i\leq m$ and at least one of 
 $f_i$'s is not equal to a constant almost everywhere. Then we get strict inequalities 
 in \eqref{main1} if $T<P$ and in \eqref{main2} if $T>P$. To see this, take $T>P$ for 
 instance. This allows us to find $q_1,\ldots,q_m$ with $q_1>p_1,\ldots,q_m>p_m$ such 
 that $Q:={\rm diag}(q_1T_{11},\ldots,q_mT_{mm})$ satisfies $T>Q>P$. From Jensen's 
 inequality, $(\e f_i(X_i)^{p_i})^{1/p_i}\leq (\e f_i(X_i)^{q_i})^{1/q_i}$ and this 
 inequality is strict if $f_i$ is not a.s. a constant. So \eqref{main2} yields 
 $$
 \prod_{i=1}^m\IE f_i(X_i) \geq 
 \prod_{i=1}^m(\e f_i(X_i)^{q_i})^{1/q_i}>\prod_{i=1}^m(\e f_i(X_i)^{p_i})^{1/p_i}.
 $$
\end{remark}

\begin{remark}\label{rmrk2}\rm
 If inequality \eqref{main1} {\rm (}resp. \eqref{main2}{\rm )}
 holds for all nonnegative $f_1,\ldots,f_m$, then we get that $T\leq P$
 {\rm (}resp. $T\geq P${\rm )}. This can be seen by using the test functions
 $f_i(x_i)=e^{\inner{\alpha_i}{x_i}}$ for $\alpha_i\in\mathbb{R}^{n_i}.$ 
 A direct computation gives that for ${\alpha}=(\alpha_1,\ldots,\alpha_m),$
 \begin{align}\label{rmrkMulti:eq1}
   \IE \prod_{i=1}^mf_i(X_i)&=\exp \frac{1}{2}
   \inner{T\alpha}{\alpha} \quad {\rm and} \quad
   \prod_{i=1}^m (\e f_i(X_i)^{p_i})^{\frac{1}{p_i}}
  =\exp\frac{1}{2}\inner{P\alpha}{\alpha} .
 \end{align}
  Thus, if (\ref{main1}) holds for all nonnegative functions, then we get
 that $\left<T{\alpha},{\alpha}\right>\leq \left<P{\alpha},{\alpha}\right>$ for any
 ${\alpha}\in\mathbb{R}^N$ and so $T\leq P$. Similarly, if (\ref{main2}) holds
 true for all nonnegative functions, then
 $T\geq P$.
\end{remark}

\begin{remark}\label{rmk3}\rm
If ${\alpha}=(\alpha_1,\ldots,\alpha_m)\in\mbox{Ker}(T-P)$ for 
$\alpha_i\in\IR^{n_i}$, then equalities hold in \eqref{main1} 
and \eqref{main2} when $f_i(x_i)=e^{\inner{\alpha_i}{x_i}}$ for $1\leq i\leq m.$
\end{remark}

\smallskip 

Let us now illustrate how our theorem recovers H\"{o}lder and reverse H\"{o}lder inequalities. Let $(X_1,X_2)$ be a centered non-degenerate bivariate normal random vector with covariance matrix $T=(T_{ij})_{1\le i,j\leq 2}$. Suppose 
that $p_1,p_2\neq 1$ satisfy H\"{o}lder's condition \eqref{eq0}. Set $P={\rm diag}(p_1T_{11},p_2T_{22})$.
Note that \eqref{eq0} implies 
\begin{equation}\label{eq.p1p2}
 (p_1-1)(p_2-1)= 1 - (\,p_1+p_2-p_1p_2\,) = 1.
\end{equation}
Thus, for $p_1,p_2 >1$, we have that 
\begin{align*}
  P-T&=\left(
  \begin{array}{cc}
   T_{11}(p_1-1)&T_{12}\\
   T_{12}&T_{22}(p_2-1)
  \end{array}
  \right) \geq 0 
\end{align*}
since using \eqref{eq.p1p2} gives $\det(P-T)=\det(T)\geq 0 $.
This shows that Theorem \ref{main}(i) implies H\"{o}lder inequality. Similarly, if $p_1,p_2 <1$, 
\eqref{eq.p1p2} yields $T-P \geq 0$ and then Theorem \ref{main}(ii) implies the reverse H\"{o}lder inequality. 

To see why Theorem \ref{main} improves H\"{o}lder's bounds in general, assume $\det(T)>0$ and again $p_1,p_2\neq 1$ satisfy \eqref{eq0}. Let $f_1$ and $f_2$ be any two nonnegative measurable functions such that at least one of them is not equal to a
constant a.e. and $0<(\e f_1(X_1)^{p_1})^{1/p_1}(\e f_2(X_2)^{p_2})^{1/p_2}<\infty$. First we consider the case $p_1,p_2>1.$ Observe that for any $q_1\in [1,p_1)$ and $q_2\in [1,p_2)$, we have $Q-T\geq 0$ if and only if $\det(Q-T)\geq 0$, where $Q:={\rm diag}(q_1T_{11},q_2T_{22})$. Write
\begin{align*}
\det(Q-T)&= \det(T) - \varepsilon_Q T_{11}T_{22},
\end{align*}
where $\varepsilon_Q:= q_1+q_2-q_1q_2.$ Note that $\varepsilon_Q\rightarrow 0$ when $q_1\uparrow p_1$ and $q_2\uparrow p_2$. Since $\det(T) >0$, there exist
exponents $q_1\in[1,p_1)$ and $q_2\in[1,p_2)$ such that $T \leq Q < P$, which implies from Theorem 
\ref{main}(i) and then Jensen's inequality that
\begin{align*}
\e f_1(X_1)f_2(X_2)& \,\leq \,
\big(\e f_1(X_1)^{q_1}\big)^{\frac{1}{q_1}}\,
\big(\e f_2(X_1)^{q_2}\big)^{\frac{1}{q_2}}\,<\,
\big(\e f_1(X_1)^{p_1}\big)^{\frac{1}{p_1}}\,
\big(\e f_2(X_1)^{p_2}\big)^{\frac{1}{p_2}}.
\end{align*}
Similarly, if $p_1,p_2<1$, there exist $q_1\in(p_1,1]$ 
and $q_2\in(p_2,1]$ such that $T \geq Q > P$, which implies from Theorem \ref{main}(ii) and again Jensen's inequality that
\begin{align*}
\e f_1(X_1)f_2(X_2)& \,\geq \,
\big(\e f_1(X_1)^{q_1}\big)^{\frac{1}{q_1}}\,
\big(\e f_2(X_1)^{q_2}\big)^{\frac{1}{q_2}}\,>\,
\big(\e f_1(X_1)^{p_1}\big)^{\frac{1}{p_1}}\,
\big(\e f_2(X_1)^{p_2}\big)^{\frac{1}{p_2}}.
\end{align*}
In other words, the exponents $q_1,q_2$ in each case improve H\"{o}lder's bounds.

\smallskip 

\begin{example}\rm
Assume that $m=2$ and $X_1,X_2$ are standard Gaussian with $\e X_1X_2=t$ for $0\leq t\leq 1$. The simplest H\"{o}lder's types of bounds for $\e f_1(X_1)f_2(X_2)$ can be obtained as follows. Note that $ (1-t) I_{2} \leq T \leq (1+t)I_{2}.$ 
Theorem \ref{main} gives that for $q_t:=1-t$ and $p_t:=1+t,$
\begin{equation}\label{eq.PrEx}
\big(\mathbb E f_{1}(X_{1})^{q_t}\big)^{\frac{1}{q_t}}\,
\big(\mathbb E f_{2}(X_{2})^{q_t}\big)^{\frac{1}{q_t}}
\,\leq\, \mathbb E f_{1}(X_{1}) f_{2}(X_{2}) \,\leq\,
\big(\mathbb E f_{1}(X_{1})^{p_t}\big)^{\frac{1}{p_t}}\,
\big(\mathbb E f_{2}(X_{2})^{p_t}\big)^{\frac{1}{p_t}}
\end{equation} for any nonnegative measurable functions $f_1,f_2.$ In particular, if $t=0,$ then $X_1,X_2$ are independent and the three quantities in \eqref{eq.PrEx} are the same; if $t=1$, the left-hand side is the Jensen inequality and the right-hand side gives the Cauchy-Schwartz inequality. To see the sharpness of \eqref{eq.PrEx}, note that $(1,1)\in\mbox{Ker}(T-(1+t)I_2)$ and $(1,-1)\in\mbox{Ker}(T-(1-t)I_2).$ From Remark \ref{rmk3}, 
$f_{1}(x)=f_{2}(x)= e^{x}$ give the left-hand sided equality of \eqref{eq.PrEx}, while the functions $f_{1}(x)=e^{x}$ and $f_{2}(x)=e^{-x}$ give the equality for the other side.
\end{example}

Inequality \eqref{main1} is strongly related to the famous Brascamp-Lieb inequality,
firstly proved by Brascamp and Lieb in \cite{BL} and later fully generalized
by Lieb in \cite{Lieb1}. It says that if $m\geq n$, $p_{1}, \ldots, p_{m}\geq 1$ with 
$\sum_{i=1}^{m}{n_{i}}p_i^{-1} = n$ and ${U}_i$ 
is a surjective linear map from $\IR^n$ to $\IR^{n_i}$ for $1\leq i\leq m$, then for 
any set of nonnegative $f_{i} \in L_{p_{i}}(\mathbb R^{n_i})$ for $1\leq i\leq m$, the 
ratio
\begin{align}\label{BL} 
\frac{ \int_{ \mathbb R^{n}}
\prod_{i=1}^{m} f_{i}({U}_ix) d x }{ \prod_{i=1}^{m} \|f_{i}\|_{p_{i}} }
\end{align}
is maximized by centered Gaussian functions, i.e., functions of the form
$$f_{i}(x_i)= \exp(-\inner{A_ix_i}{x_i}),$$ where $A_i$
is a $n_i\times n_i$-dimensional real symmetric and positive definite matrix. 
Their approach was based on a tensorization argument and the Brascamp- Lieb-Luttinger  
rearrangement inequality \cite{BLL}. Several different proofs of this inequality 
have appeared later using different tools, see \cite{B,BC,BBC,BCCT,Le}. For more 
information about the Brascamp-Lieb inequalities as well as their generalizations in 
non-Euclidean settings, we refer to \cite{BCLM,BH,C,CC,CLL,Lieb1}. 

Among various formulations, Ball first put forward the geometric form of the 
Brascamp-Lieb inequality in \cite{Ball1} and used it to derive sharp inequalities for convex bodies in $\mathbb{R}^n$ (see \cite{Ball2}). Later it was generalized by Barthe \cite{B} and in the recent paper \cite{BCLM}, Bennett, Carbery, Christ and Tao showed that by 
a clever change of variables, one can retrieve the initial Brascamp-Lieb inequality 
by this geometric form. 

For the purpose of our discussion, we shall also consider an equivalent version of 
the geometric Brascamp-Lieb inequality (Theorem \ref{G-BL} in Section \ref{sec3}), where 
the underlying measures are Gaussian. We will show that Theorem \ref{main}(i) is 
indeed another formulation of this inequality. While the connection between the upper 
bound \eqref{main1} and the Brascamp-Lieb inequality can be completely clarified, the 
lower bound \eqref{main2} appears to be new to the authors as it is by no means clear 
which known equalities will imply \eqref{main2}.

\smallskip

In this paper, we will present two applications from Theorem \ref{main} with proper chosen 
covariance matrices and exponents. The first is Nelson's Gaussian hypercontractivity and its reverse 
form. The second is the Lebesgue version of Theorem \ref{main} that provides a generalization of the sharp Young and reverse Young inequalities, for which we now formulate.

\begin{theorem}\label{Gen-Rev-Young}
Let $n,m\in\IN$, $n_1,\ldots,n_m \leq n$ and $p_{1}, \ldots, p_{m}$ be real numbers such that
\begin{equation}\label{eq.homogen}
\sum_{i=1}^{m} \frac{n_{i}}{p_i} = n,
\end{equation}
Assume that  $U_{i}$ is a $n_{i}\times n$ matrix with rank $n_{i}$ for $1\leq i\leq m$. 
Set $N=\sum_{i=1}^m n_i$. Let $U$ be the $N\times n$ matrix with block rows 
$U_1,\ldots,U_m,$ i.e., $U^*=\left(U_1^*,\ldots,U_m^*\right)$. Let $B$ be a $n\times n$ 
real symmetric and positive definite matrix. Set
\begin{align*}
P&={\rm diag}\left(p_1I_{n_1},\ldots,p_mI_{n_m}\right),\\
D_{UBU^*}&={\rm diag}\left(U_1B U_1^*,\ldots,U_mBU_m^*\right).
\end{align*}
For nonnegative $f_{i} \in L_{p_{i}}(\IR^{n_i})$ for $i\leq m$, 
the following statements hold.
\begin{itemize}
 \item[(i)] If
            \begin{equation}\label{eq.A-R-Y-Up}
             UBU^* \leq P D_{UBU^*},
            \end{equation}
            then
            \begin{equation} \label{eq.R-Y-Up-intro}
             \int_{\mathbb R^{n}} \prod_{i=1}^{m} f_{i} (U_i\,x)\,dx
             \leq  \left(\frac{{\rm det}(B)}
             {\prod_{i=1}^m{\rm det}(U_{i}B{U}_i^*)^{\frac{1}{p_i}}}\right)^{\frac12}
             \,\prod_{i=1}^m \|f_{i} \|_{p_{i}}.
            \end{equation}
            The equality holds if $f_i(x_i)=
            \exp\left(-p_i^{-1}\left<(U_iBU_i^*)^{-1}x_i,x_i\right>\right)$ for $i\leq m$.
 \item[(ii)] If
            \begin{equation}\label{eq.A-R-Y-Down}
             UBU^* \geq P D_{UBU^*},
            \end{equation}
then
            \begin{equation} \label{eq.R-Y-Down-intro}
             \int_{\mathbb R^{n}} \prod_{i=1}^{m} f_{i} (U_i\,x)\,dx
             \geq  \left(\frac{{\rm det}(B)}
             {\prod_{i=1}^m{\rm det}(U_{i}BU_i^*)^{\frac{1}{p_i}}}\right)^{\frac12}
             \,\prod_{i=1}^m \|f_{i} \|_{p_{i}}.
            \end{equation}
\end{itemize}
\end{theorem}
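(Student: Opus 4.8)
The plan is to deduce Theorem~\ref{Gen-Rev-Young} from Theorem~\ref{main} by realizing the Lebesgue integral $\int_{\mathbb{R}^n}\prod_i f_i(U_ix)\,dx$ as a suitably renormalized limit of Gaussian expectations. Fix $\lambda>0$, let $X$ be a centered Gaussian vector on $\mathbb{R}^n$ with covariance $\lambda B$, and set $X_i:=U_iX$ for $1\le i\le m$. Then $\mathbf X:=(X_1,\dots,X_m)=UX$ is a centered $N$-dimensional Gaussian vector whose covariance is $\lambda\,UBU^*$ and whose $i$-th diagonal block $\lambda\,U_iBU_i^*$ is positive definite (since $U_i$ has rank $n_i$ and $B>0$). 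Hence the block-diagonal matrix attached to $\mathbf X$ in Theorem~\ref{main} is ${\rm diag}(p_1\lambda U_1BU_1^*,\dots,p_m\lambda U_mBU_m^*)=\lambda\,PD_{UBU^*}$, so, as $\lambda>0$, the hypotheses \eqref{eq.A-R-Y-Up} and \eqref{eq.A-R-Y-Down} are exactly the hypotheses of Theorem~\ref{main}(i) and (ii) for $\mathbf X$.

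Applying Theorem~\ref{main} to $f_1,\dots,f_m$ themselves and writing out the Gaussian densities gives
\[
\IE\prod_{i=1}^m f_i(X_i)=\frac{1}{(2\pi\lambda)^{n/2}(\det B)^{1/2}}\int_{\mathbb{R}^n}\prod_{i=1}^m f_i(U_ix)\,e^{-\frac{1}{2\lambda}\langle B^{-1}x,x\rangle}\,dx
\]
and $\IE f_i(X_i)^{p_i}=(2\pi\lambda)^{-n_i/2}\det(U_iBU_i^*)^{-1/2}\int_{\mathbb{R}^{n_i}}f_i^{p_i}e^{-\frac{1}{2\lambda}\langle (U_iBU_i^*)^{-1}y,y\rangle}\,dy$. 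Multiplying the conclusion of Theorem~\ref{main} by the positive constant $(2\pi\lambda)^{n/2}(\det B)^{1/2}$ and using the homogeneity relation \eqref{eq.homogen} — which is precisely what makes $\prod_i(2\pi\lambda)^{n_i/(2p_i)}=(2\pi\lambda)^{n/2}$, so that the $2\pi\lambda$ factors cancel and the prefactor collapses to $\big(\det B/\prod_i\det(U_iBU_i^*)^{1/p_i}\big)^{1/2}$ — yields, in case (i),
\begin{align*}
\int_{\mathbb{R}^n}\prod_{i=1}^m f_i(U_ix)\,e^{-\frac{1}{2\lambda}\langle B^{-1}x,x\rangle}\,dx
&\le\Big(\frac{\det B}{\prod_{i=1}^m\det(U_iBU_i^*)^{1/p_i}}\Big)^{1/2}\\
&\quad\times\prod_{i=1}^m\Big(\int_{\mathbb{R}^{n_i}}f_i^{p_i}\,e^{-\frac{1}{2\lambda}\langle (U_iBU_i^*)^{-1}y,y\rangle}\,dy\Big)^{1/p_i},
\end{align*}
and the reverse inequality in case (ii).

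Now I let $\lambda\to\infty$. The weights $e^{-\frac{1}{2\lambda}\langle\cdot,\cdot\rangle}$ increase pointwise to $1$, so by monotone convergence every integral above increases to the corresponding unweighted (Lebesgue) integral, and passing to the limit in the inequality gives exactly \eqref{eq.R-Y-Up-intro} and \eqref{eq.R-Y-Down-intro}. I expect the only genuinely delicate point to be this limit in case (ii): when some $p_i<0$, the individual factors $\IE f_i(X_i)^{p_i}$ need not be monotone in $\lambda$, so the argument must be run at the level of the products — using \eqref{eq.homogen} to see that the $\lambda$-powers combine to $(2\pi\lambda)^{n/2}$ and cancel, observing that each factor $\big(\int f_i^{p_i}e^{-\frac{1}{2\lambda}\langle\cdot,\cdot\rangle}\big)^{1/p_i}$ converges to $\|f_i\|_{p_i}$, and invoking the convention $\infty\cdot 0=0$ for the degenerate cases in which some $\|f_i\|_{p_i}$ is $0$ or $\infty$. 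The rest is routine bookkeeping with Gaussian integrals.

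For the equality assertion in (i), substitute the Gaussians $f_i(x_i)=\exp(-p_i^{-1}\langle (U_iBU_i^*)^{-1}x_i,x_i\rangle)$. Then $\prod_i f_i(U_ix)=\exp(-\langle Mx,x\rangle)$ with $M:=\sum_i p_i^{-1}U_i^*(U_iBU_i^*)^{-1}U_i$, so the left-hand side of \eqref{eq.R-Y-Up-intro} equals $\pi^{n/2}(\det M)^{-1/2}$, while \eqref{eq.homogen} makes its right-hand side equal $\pi^{n/2}(\det B)^{1/2}$. Since $B^{1/2}MB^{1/2}$ is positive semidefinite with trace $\sum_i p_i^{-1}\operatorname{tr}\big((U_iBU_i^*)^{-1}U_iBU_i^*\big)=\sum_i n_i/p_i=n$, the arithmetic--geometric mean inequality gives $\det(MB)\le 1$, hence $\pi^{n/2}(\det M)^{-1/2}\ge\pi^{n/2}(\det B)^{1/2}$; combining this with \eqref{eq.R-Y-Up-intro} applied to these functions forces equality.
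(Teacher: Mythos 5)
Your proposal is correct and follows essentially the same route as the paper: introduce the Gaussian vector $UX$ with covariance $\lambda\,UBU^*$, apply Theorem~\ref{main}, cancel the $(2\pi\lambda)$ factors via the homogeneity relation~\eqref{eq.homogen}, and let $\lambda\to\infty$ (the paper writes $\sigma$ for your $\lambda$, and performs the change of variables $y=\Sigma x$ with $B=\Sigma\Sigma^*$ before scaling, but the computation is the same). The one genuine departure is your equality case in~(i): the paper cites Lemma~\ref{prop1}, which shows that under \eqref{eq.homogen} and \eqref{eq.A-R-Y-Up} the matrix $M:=\sum_i p_i^{-1}U_i^*(U_iBU_i^*)^{-1}U_i$ equals $B^{-1}$ exactly, whence both sides of \eqref{eq.R-Y-Up-intro} evaluate to $\pi^{n/2}(\det B)^{1/2}$; your trace/AM--GM argument instead derives the reverse inequality $\pi^{n/2}(\det M)^{-1/2}\geq\pi^{n/2}(\det B)^{1/2}$ independently (using $p_i\geq 1>0$ in case~(i) to see $B^{1/2}MB^{1/2}\geq 0$) and sandwiches. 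That is self-contained and avoids Lemma~\ref{prop1} altogether. One small remark: the worry you flag about monotonicity in case~(ii) does not actually arise, because once the $(2\pi\lambda)$ normalizations have been stripped off, each weighted integral $\int f_i^{p_i}\,e^{-\frac{1}{2\lambda}\langle(U_iBU_i^*)^{-1}\cdot,\cdot\rangle}$ is monotone increasing in $\lambda$ regardless of the sign of $p_i$; each factor $\big(\int f_i^{p_i}\,e^{-\frac{1}{2\lambda}\langle\cdot,\cdot\rangle}\big)^{1/p_i}$ therefore converges by monotone convergence and continuity of $t\mapsto t^{1/p_i}$ on $(0,\infty]$, and the only delicacy is the $0\cdot\infty$ convention, exactly as you say.
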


As we will see in Sections \ref{sec6} and \ref{Sec6}, \eqref{eq.R-Y-Up-intro} is indeed the 
Brascamp-Lieb inequality, only now the original geometric condition in Ball's geometric 
Brascamp-Lieb inequality is equivalently replaced by the algebraic inequality 
\eqref{eq.A-R-Y-Up}. However, once again the authors do not know  which known inequalities 
in the literatures are equivalent to \eqref{eq.R-Y-Down-intro}.  Neither do they know the 
form of the $f_i$'s that yields the equality in \eqref{eq.R-Y-Down-intro} in general. 

\smallskip

There are several consequences that can be drawn from Theorem \ref{Gen-Rev-Young}. They 
all start from a generalization of Barthe's lemma. In \cite{BAYoung}, Barthe used 
measure transportation techniques to give a simple proof of the sharp Young and reverse 
Young inequalities. Later in \cite{BaArxiv}, he generalized the argument and derived a 
reverse form of the Brascamp-Lieb inequality \eqref{BL}, known as Barthe's inequality. 
The core of \cite{BAYoung,BaArxiv} was played by a moment inequality stated in Lemma 1 \cite{BAYoung}, which we call Barthe's lemma. 
Using Theorem \ref{Gen-Rev-Young}, we will establish a generalization of his lemma (see Theorem \ref{THE prop} in Section \ref{sec6}) that yields a two-sided moment inequality. 
Incidentally, similar results are discovered independently in a recent work of Barthe and 
Wolff \cite{BW} using again measure transportation methods. 

\smallskip

The power of our result could be borne upon the fact that it indeed implies several 
inequalities as:
\begin{itemize}
\item[(I)] The Pr\'{e}kopa-Leindler inequality 
\item[(II)] The sharp Young and reverse Young inequalities.
\item[(III)] The Brascamp-Lieb and Barthe inequalities.
\item[(IV)] An entropy inequality (see Subsection 6.3 below).
\end{itemize}
For a comprehensive overview about the connections among these and other known 
inequalities in literatures, we refer to the survey paper of Gardner \cite{Gardner}.

\smallskip

The rest of the paper is organized as follows. 
 Section \ref{Sec2} is devoted to proving Theorem \ref{main}, where two different
proofs are presented. The first is based on the Gaussian integration by parts formula 
combining with an iteration argument and the second uses the Ornstein-Uhlenbeck semi-group 
techniques. 
 In Section \ref{sec4}, we investigate the connection between Theorem \ref{main}(i) and the 
Brascamp-Lieb inequality, we study the geometry of the eligible exponents in Theorem 
\ref{main}(i) and we prove Nelson's hypercontractivity  and its reverse.
 In Section \ref{sec5}, we prove Theorem \ref{Gen-Rev-Young} and explain how it generalizes the sharp Young and reverse Young inequalities.
 In Section \ref{sec6}, we prove the generalized Barthe's lemma. 
Finally, its applications are given in Section \ref{Sec6}, where we deduce 
inequalities (I), (II), (III) and (IV).

\section{Proofs of Theorem \ref{main}}\label{Sec2}

In this section, we will present two fundamental proofs for Theorem \ref{main}. 
Let $m,n_1,\ldots,n_m$ be positive integers and let $N=n_1+\cdots+n_m.$ Recall 
$\mathbf{X},X_1,\ldots,X_m$ and the matrices $T,P$ from the statement of Theorem 
\ref{main}. We denote $X_i=(X_{i1},\ldots,X_{in_i}).$ An application of change of 
variables suggests that in both proofs, we may assume without loss of generality, 
$T_{11}= I_{n_{1}},\ldots,T_{mm}=I_{n_m}.$ In other words, each $X_i$ is a 
$n_i$-dimensional standard Gaussian random vector. For notational convenience, 
we use $\gamma_k$ to denote the $k$-dimensional standard Gaussian measure on 
$\mathbb{R}^k$ for $k\geq 1.$

\subsection{First proof: the Gaussian integration by parts}
We begin with the formulation of the Gaussian integration by parts formula. Let 
$Y,Z_1,\ldots,Z_N$ be centered jointly Gaussian random variables.
For a real-valued function $F$ defined on $\mathbb{R}^N$ with uniformly bounded 
first partial derivatives, this formula reads
\begin{align*}
\IE YF(Z_1,\ldots,Z_N)&=\sum_{i=1}^N\IE YZ_i\cdot
\IE\frac{\partial F}{\partial x_i}(Z_1,\ldots,Z_N).
\end{align*}
 This formula has been playing
a fundamental role in understanding the behavior of the highly correlated Gaussian
random variables arising from modeling various scientific phenomenon, such as the mean
field spin glass models \cite{Tal11}. The argument that we are about to present below 
has already been applied to quantify the error estimate in similar inequalities. We 
refer to \cite{Chen11} along this direction. Set $A=T-P.$ We now state a lemma that 
is the real ingredient of the matter.

\begin{lemma}\label{lem1} Let $L_1,\ldots,L_m$ be real-valued functions defined
respectively on $\mathbb{R}^{n_1},\ldots,\mathbb{R}^{n_m}$
and their first four partial derivatives be uniformly bounded. Define for $u\in [0,1],$
\begin{align*}
\phi(u)&=\log \IE\exp \sum_{i=1}^mL_i(\sqrt{u}X_i)
\end{align*}
and
\begin{align*}
\phi_i(u)&=
\frac{1}{p_i}\log \IE\exp p_i L_i(\sqrt{u}X_i)
\end{align*}
for $1\leq i\leq m,$ where $\phi_i$ should be read as $\e L_i(\sqrt{u}X_i)$ when $p_i=0.$ If $A\leq 0,$ then
\begin{align}
\label{lem1:eq1}
\phi(u)&\leq \sum_{i=1}^m\phi_i(u)+Ku^2;
\end{align}
if $A\geq 0,$ then
\begin{align}
\label{lem1:eq2}
\phi(u)&\geq \sum_{i=1}^m\phi_i(u)-Ku^2.
\end{align}
Here $K$ is some positive constant depending only on the supremum norms of the first 
four partial derivatives of $L_1,\ldots, L_m.$
\end{lemma}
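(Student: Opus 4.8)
The plan is to study the single function $\psi(u):=\phi(u)-\sum_{i=1}^m\phi_i(u)$ on $[0,1]$ by differentiating once and extracting the matrix $A=T-P$. First I record that $\psi(0)=0$, since $\phi(0)=\sum_iL_i(0)$ and $\phi_i(0)=L_i(0)$. The growth bound $|L_i(x)|\le|L_i(0)|+\|\nabla L_i\|_\infty|x|$ forces every Gaussian integral appearing below to be finite, with $\IE\exp(\cdot)$ bounded above and below uniformly for $u\in[0,1]$, and makes $\phi,\phi_1,\dots,\phi_m$ continuous on $[0,1]$ and $C^1$ on $(0,1]$ (differentiation under the integral is legitimate there because $|L_i|$ grows at most linearly). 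For $u>0$ let $\langle\,\cdot\,\rangle$ denote expectation under the probability measure on $\IR^N$ proportional to $\exp\bigl(\sum_iL_i(\sqrt uX_i)\bigr)\,d\gamma_N$, and $\langle\,\cdot\,\rangle_i$ expectation under the measure on $\IR^{n_i}$ proportional to $\exp\bigl(p_iL_i(\sqrt uX_i)\bigr)\,d\gamma_{n_i}$ (which is just $\gamma_{n_i}$ when $p_i=0$).

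Next I differentiate. Writing $H(u)=\sum_iL_i(\sqrt uX_i)$ one gets $\phi'(u)=\tfrac1{2\sqrt u}\sum_{i,k}\langle X_{ik}\,\partial_kL_i(\sqrt uX_i)\rangle$, and applying the Gaussian integration by parts formula recalled above with $Y=X_{ik}$ to each summand — the relevant covariances being the entries of the blocks $T_{ij}$, together with $T_{ii}=I_{n_i}$ — the factors $\sqrt u$ cancel and one is left with
\[
\phi'(u)=\tfrac12\sum_{i=1}^m\langle\Delta_iL_i(\sqrt uX_i)\rangle+\tfrac12\,\langle\,\langle TV,V\rangle\,\rangle,\qquad V:=\bigl(\nabla L_1(\sqrt uX_1),\dots,\nabla L_m(\sqrt uX_m)\bigr)\in\IR^N,
\]
where the outer bracket in the last term is again $\langle\,\cdot\,\rangle$ and the inner one is the Euclidean inner product. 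The identical computation applied to the one-block quantity $\phi_i(u)=\tfrac1{p_i}\log\IE\exp(p_iL_i(\sqrt uX_i))$ gives $\phi_i'(u)=\tfrac12\langle\Delta_iL_i(\sqrt uX_i)\rangle_i+\tfrac{p_i}2\langle|\nabla L_i(\sqrt uX_i)|^2\rangle_i$, which also holds for $p_i=0$. Since $\langle\,\langle PV,V\rangle\,\rangle=\sum_ip_i\langle|\nabla L_i(\sqrt uX_i)|^2\rangle$, subtracting yields
\[
\psi'(u)=\tfrac12\,\langle\,\langle AV,V\rangle\,\rangle+\tfrac12\sum_{i=1}^m\bigl(\langle\Delta_iL_i\rangle-\langle\Delta_iL_i\rangle_i\bigr)+\tfrac12\sum_{i=1}^m p_i\bigl(\langle|\nabla L_i|^2\rangle-\langle|\nabla L_i|^2\rangle_i\bigr),
\]
all functions evaluated at $\sqrt uX_i$.

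The first term already has the correct sign: if $A\le0$ then $\langle AV(\omega),V(\omega)\rangle\le0$ pointwise, hence $\langle\,\langle AV,V\rangle\,\rangle\le0$, and symmetrically it is $\ge0$ if $A\ge0$. It remains to show that the other two sums are $O(u)$ uniformly on $[0,1]$ with constant controlled only by the four derivative bounds. This is where the centering of the $X_i$ is used: for any $G$ with bounded second derivatives, Taylor's formula $G(\sqrt uX_i)=G(0)+\sqrt u\,\nabla G(0)\!\cdot\!X_i+O(u|X_i|^2)$, combined with one more Gaussian integration by parts (which gives $\langle X_{ik}\rangle=O(\sqrt u)$ and $\langle X_{ik}\rangle_i=O(\sqrt u)$), yields $\langle G(\sqrt uX_i)\rangle=G(0)+O(u)$ and $\langle G(\sqrt uX_i)\rangle_i=G(0)+O(u)$; applying this to $G=\Delta_iL_i$ and $G=|\nabla L_i|^2$, whose Hessians involve the first four partial derivatives of $L_i$, makes both sums $O(u)$. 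Hence $\psi'(u)\le Ku$ when $A\le0$ and $\psi'(u)\ge-Ku$ when $A\ge0$, with $K$ of the required form. Since $\psi(0)=0$, integrating over $[0,u]$ gives $\psi(u)\le\tfrac K2u^2$ (resp. $\ge-\tfrac K2u^2$), and after relabeling $K/2$ as $K$ this is exactly \eqref{lem1:eq1} and \eqref{lem1:eq2}.

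I expect the main obstacle to be precisely this last estimate: one must control the two genuinely different tilted averages $\langle\,\cdot\,\rangle$ and $\langle\,\cdot\,\rangle_i$ at the same time and gain the extra power of $\sqrt u$ — a crude bound only gives $O(\sqrt u)$, which is not enough — so some care is needed to check that the $\sqrt u$-order contributions cancel because the $X_i$ are centered, and to verify that the constant thereby produced depends only on the supremum norms of the first four partial derivatives of $L_1,\dots,L_m$ (and on the fixed data $m,n_i,p_i$). The appearance of the fourth derivative in the hypothesis is forced by the need to bound $\nabla^2(\Delta_iL_i)$.
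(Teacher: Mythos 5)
Your proof is correct, and it shares the core of the paper's argument: one Gaussian integration by parts to express the first derivatives, extraction of the quadratic form $\langle AV,V\rangle$ which carries the correct sign, and control of the error by the first four derivative bounds on the $L_i$. The only substantive difference is in how the $O(u^2)$ remainder is obtained. The paper evaluates $\psi'(0)=\tfrac12\langle AV(0),V(0)\rangle$ exactly, bounds $\sup_{u\in[0,1]}|\psi''(u)|$ by a constant $K$ (which costs a second round of integration by parts and is where the fourth derivatives appear), and applies second-order Taylor at $u=0$. You instead keep the good-sign term $\tfrac12\langle\langle AV(u),V(u)\rangle\rangle$ pointwise in $u$ and show the residual — the mismatch between the two tilted averages $\langle\cdot\rangle$ and $\langle\cdot\rangle_i$ applied to $\Delta L_i$ and $|\nabla L_i|^2$ — is $O(u)$ by expanding those averages to first order and using $\langle X_{ik}\rangle,\langle X_{ik}\rangle_i=O(\sqrt u)$ from one more integration by parts, then integrating $\psi'$; your cancellation of the $L_j(0)$ normalisation in the tilted weights is the right way to ensure the constant depends only on derivative norms. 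Both packages use the same inputs and yield the same constant-dependence; yours avoids ever writing $\psi''$, which is a mild simplification, while the paper's is a more standard Taylor-with-remainder bound.
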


\begin{proof} 
For clarity, we adapt the notation $x_i=(x_{i1},\ldots,x_{in_i})\in\mathbb{R}^{n_i}$ 
and $\partial_{x_{ij}}L_i(x_i)$ standards for the partial derivative of $U_i$ with 
respect to $x_{ij}.$ Using the relation $\e X_{ij}X_{ij'}=\delta_{j,j'}$ for all 
$1\leq i\leq m$ and every $1\leq j,j'\leq n_i,$ a direct computation using Gaussian 
integration by parts and $T_{ii}=I_{n_i}$ yields
\begin{align*}
&\phi'(u)\\
&=\frac{1}{2\sqrt{u}}\frac{\IE
\sum_{i=1}^m\sum_{j=1}^{n_i}X_{ij}
\partial_{x_{ij}}L_i(\sqrt{u}X_i)\exp \sum_{k=1}^mL_k(\sqrt{u}X_k)}{\exp\phi(u)}\\
&=\frac{1}{2\exp \phi(u)}\biggl(\sum_{i=1}^m\sum_{j=1}^{n_i}
\IE \partial_{x_{ij}}^2L_i(\sqrt{u}X_i)\exp \sum_{k=1}^mL_k(\sqrt{u}X_k)\,+\bigg.\\
&\biggl.\sum_{i,i'=1}^m\sum_{j=1}^{n_i}\sum_{j'=1}^{n_{i'}}\IE X_{ij}X_{i'j'}
\IE \partial_{x_{ij}}L_i(\sqrt{u}X_i)
\partial_{x_{i'j'}}L_{i'}(\sqrt{u}X_i')\exp \sum_{k=1}^mL_k(\sqrt{u}X_k)\biggr)
\end{align*}
and
\begin{align*}
\phi_i'(u)&=\frac{1}{2\sqrt{u}}
\frac{\IE \sum_{j=1}^{n_i}X_{ij}
\partial_{x_{ij}}L_i(\sqrt{u}X_i)\exp p_i L_i(\sqrt{u}X_i)}{\exp p_i \phi_i(u)}\\
&=\frac{1}{2}\sum_{j=1}^{n_i}\frac{\IE
(\partial_{x_{ij}}^2L_i(\sqrt{u}X_i)+p_i
(\partial_{x_{ij}}L_i(\sqrt{u}X_i))^2)\exp p_i L_i(\sqrt{u}X_i)}{\exp p_i\phi_i(u)}.
\end{align*}
Thus,
\begin{align}
\begin{split}\nonumber
\phi'(0)-\sum_{i=1}^n\phi_i'(0)
&=\frac{1}{2}\sum_{i,i'=1}^m\sum_{j=1}^{n_i}
\sum_{j'=1}^{n_{i'}}\IE X_{ij}X_{i'j'}\partial_{x_{ij}}L_{i}(0)\partial_{x_{i'j'}}L_{i'}(0)\\
&-\frac{1}{2}\sum_{i=1}^m\sum_{j=1}^{n_i}p_i(\partial_{x_{ij}}L_i(0))^2
\end{split}\\
\begin{split}\label{eq1}
&=\frac{1}{2}\left<AV,V\right>,
\end{split}
\end{align}
where
$$
V=(\partial_{x_{11}}L_1(0),\ldots,
\partial_{x_{1n_1}}L_1(0),\ldots,\partial_{x_{m1}}L_m(0),\ldots,\partial_{x_{mn_m}}L_m(0)).
$$
One may also perform a similar computation as above to represent the second derivatives
of $\phi_1,\ldots,\phi_m,\phi$ in terms of the first four partial derivatives of 
$L_1,\ldots,L_m$ using Gaussian integration by parts formula. From the uniformly 
boundedness of the first four partial derivatives of $L_1,\ldots, L_m$, we have that
$$
\sup_{0\leq u\leq 1}\left|\phi''(u)-\sum_{i=1}^m\phi_i''(0)\right|\leq K
$$
for some fixed positive constant $K.$ Using this, $(\ref{eq1})$ and
$\phi(0)=\sum_{i=1}^m\phi_i(0),$ we conclude from the mean value theorem that 
if $A\leq 0,$ then \eqref{lem1:eq1} follows since
\begin{align*}
\phi(u)-\sum_{i=1}^m\phi_i(u)&\leq\phi(0)-
\sum_{i=1}^m\phi_i(0)+\left(\phi'(0)-\sum_{i=1}^m\phi_i'(0)\right)u+Ku^2\\
&=0+\frac{1}{2}\left<AV,V\right>u+ Ku^2\\
&\leq Ku^2.
\end{align*}
Similarly, if $A\geq 0,$ we also obtain \eqref{lem1:eq2} and this completes our proof.
\end{proof}

\begin{proof}[Proof of Theorem \ref{main}:] To avoid triviality, we will assume that each $f_i$ is not identically zero and each $T_{ii}$ is not a zero matrix. Our arguments will be divided into two major parts. First, we consider the case that
$f_1=\exp{L_1},\ldots, f_m=\exp{L_m}$, where $L_1,\ldots,L_m$ are defined respectively on
$\mathbb{R}^{n_1},\ldots,\mathbb{R}^{n_m}$ with uniformly bounded partial derivatives of 
any orders. Define
\begin{align*}
\phi(u,x_1,\ldots,x_m)&=\log\mathbb{E}\exp
\sum_{i=1}^mL_i(x_i+\sqrt{u}X_i)
\end{align*}
 and
\begin{align*}
\phi_i(u,x_i )&=
\frac{1}{p_i}\log\mathbb{E}\exp p_iL_i(x_i +\sqrt{u}X_i),
\end{align*}
for $u\in[0,1]$ and $x_i=(x_{i1},\ldots,x_{in_i})\in\mathbb{R}^{n_i}$, where $\phi_i$ is read as $\e L_i(x_i+\sqrt{u}X_i)$ when $p_i=0.$ 
We prove \eqref{main1} first. Note that since 
the first four partial derivatives of $L_1,\ldots,L_m$ are uniformly bounded, one can  
use the Gaussian integration by parts formula as we have done in Lemma $\ref{lem1}$ to  
obtain a constant $K>0$ independent of $u,x_1,\ldots,x_m$ such that the first four 
partial derivatives of $\phi_1(u,x_1+\cdot),\ldots,\phi_m(u,x_m+\cdot)$ are uniformly 
bounded by $K$. Let $K'$ be the constant obtained by applying $(\ref{lem1:eq1})$ to
$\phi_1(u,x_1+\cdot),\ldots,\phi_m(u,x_m+\cdot)$ instead of $L_1,\ldots,L_m,$
i.e. $K'$ satisfies that
\begin{align}
\begin{split}\label{eq4}
&\log\IE\exp \sum_{i=1}^m\phi_i(u,x_i+\sqrt{v}X_i)\\
&\leq
\sum_{i=1}^m\frac{1}{p_i}\log\IE\exp p_i\phi_i(u,x_i+\sqrt{v}X_i)+K'v^2.
\end{split}
\end{align}
Note that $K'$ only depends on $K.$

\smallskip

We claim that for every $M\in\mathbb{N},$
\begin{align}\label{thm:proof:eq1}
\phi\left(\frac{j}{M},x_1,\ldots,x_m\right)
\leq \sum_{i=1}^m\phi_i\left(\frac{j}{M},x_i\right)+\frac{K'j}{M^2}
\end{align}
for all $1\leq j\leq M.$
Since $\phi_i(0,\cdot)=L_i(\cdot)$ for $1\leq i\leq m,$ the base case $j=1$ follows 
by letting $u=0$ and $v=1/M$ in $(\ref{eq4}).$ Suppose that our claim holds for some 
$1\leq j\leq M-1$. Write
\begin{align*}
\phi\left(\frac{j+1}{M},x_1,\ldots,x_m\right)&=\log\mathbb{E}
\exp\phi\left(\frac{j}{M},
x_1+\frac{X_1}{\sqrt{M}},\ldots,x_m+\frac{X_m}{\sqrt{M}}\right).
\end{align*}
Using the induction hypothesis and then $(\ref{eq4})$ with $u=j/M$ and $v=1/M$, we have
\begin{align*}
&\phi\left(\frac{j+1}{M},x_1,\ldots,x_m\right)\\
&\leq \log\mathbb{E}\exp
\sum_{i=1}^m\phi_i\left(\frac{j}{M},x_i+\frac{X_i}{\sqrt{M}}\right)+\frac{K'j}{M^2}\\
&\leq \sum_{i=1}^m\frac{1}{p_i}
\log\mathbb{E}\exp p_i\phi_i\left(\frac{j}{M},x_i+\frac{X_i}{\sqrt{M}}\right)
+\frac{K'}{M^2}+\frac{K'j}{M^2}\\
&=\sum_{i=1}^m\phi_i\left(\frac{j+1}{M},x_i\right)+\frac{K'(j+1)}{M^2}.
\end{align*}
This completes the proof of our claim. Now, letting $j=M$ and $x_1,\ldots,x_m$
be all equal to the zero vectors in $(\ref{thm:proof:eq1})$ and $M\rightarrow\infty$, 
we obtain \eqref{main1} in the case that $f_1=\exp L_1,\ldots,f_m=\exp L_m$. One may argue similarly to obtain \eqref{main2} in such case as well.

\smallskip

Next we consider the general case that $f_1,\ldots,f_m$ are nonnegative measurable. Note that in the following, $(\e f(Y)^{p})^{1/p}$ will always be read as $\exp\e \log f(Y)$ whenever $p=0$ and the latter is well-defined. First, we assume that for every $1\leq i\leq m,$ 
\begin{align}\label{eq88}
\mbox{$\e f_i(X_i)^{p_i}<\infty$ if $p_i\neq 0$ and $\e \log f_i(X_i)>-\infty$ if $p_i=0$.}
\end{align} 
Under this assumption, from the monotone convergence theorem, it suffices to assume that $1/2\leq f_1,\ldots,f_m\leq 1$. Let
$$
f_{i,j}=\frac{1}{2}1_{([-j,j]^{n_i})^c}+f_j1_{[-j,j]^{n_i}}.
$$
Since $f_{i,j}\uparrow f_i$ as $j\rightarrow \infty$, we can further assume by the monotone convergence
theorem that $f_i=1/2$ on $([-1,1]^{n_i})^c$ and $1/2\leq f_i\leq 1$ on $[-1,1]^{n_i}.$ Now we
use mollifier function to construct a sequence of smooth functions $(g_{i,j})_{j\geq 1}$ that
satisfies $g_{i,j}=1/2$ on $([-3/2,3/2]^{n_i})^c$, $1/2\leq g_{i,j}\leq 1$ on $[-3/2,3/2]^{n_i}$,
and converges to $f_i$ a.e. with respect to the Lebesgue measure. Therefore, with these constructions,
\begin{align}
\begin{split}\label{thm1:proof:eq1}
\lim_{j\rightarrow\infty}\bigl(\IE g_{i,j}(X_i)^{p_i}\biggr)^{\frac{1}{p_i}}&=\biggl(\IE f_i(X_i)^{p_i}\bigr)^{\frac{1}{p_i}},\\
\lim_{j\rightarrow\infty}\e\prod_{i=1}^mg_{i,j}(X_i)&=\e\prod_{ i=1}^mf_i(X_i).
\end{split}
\end{align}
Take $L_{i,j}=\log g_{i,j}.$ Then each $L_{i,j}$ has uniformly bounded derivatives of any orders
and $g_{i,j}=\exp{{L_{i,j}}}.$ By the first part of our argument and $(\ref{thm1:proof:eq1})$, we obtain \eqref{main1} and \eqref{main2}.

\smallskip

To finish the proof, it remains to deal with the case that \eqref{eq88} does not hold for all $1\leq i\leq m.$ Let 
\begin{align*}
I&=\{i:p_i>0,\e f_i(X_i)^{p_i}=\infty\},\\
I'&=\{i:p_i>0,\e f_i(X_i)^{p_i}<\infty\},\\
J&=\{i:p_i\leq 0,\,\,\e\log f_i(X_i)=-\infty\,\,\mbox{if $p_i=0$}\,\,\mbox{and}\,\,\e f_i(X_i)^{p_i}=\infty\,\,\mbox{if $p_i<0$}\},\\
J'&=\{i:p_i\leq 0,\,\,\e\log f_i(X_i)>-\infty\,\,\mbox{if $p_i=0$}\,\,\mbox{and}\,\,\e f_i(X_i)^{p_i}<\infty\,\,\mbox{if $p_i<0$}\}.
\end{align*}
Note that $I\cup I'\cup J\cup J'=\{1,\ldots,m\}$ and $I\cup J\neq \emptyset.$ In the case that $P\geq T,$ we have $p_1,\ldots,p_m\geq 1$. This means that $I\neq\emptyset$ and $J=\emptyset=J'$. So 
$$\prod_{i=1}^m(\e f_i(X_i)^{p_i})^{{1}/{p_i}}=\prod_{i\in I}(\e f_i(X_i)^{p_i})^{{1}/{p_i}}\cdot \prod_{i\in I'}(\e f_i(X_i)^{p_i})^{{1}/{p_i}}=\infty,$$
 which clearly gives \eqref{main1}. Suppose that $P\leq T.$ If $J\neq \emptyset,$ noting that $(\e f_i(X_i)^{p_i})^{{1}/{p_i}}=0$ for all $i\in J$, it follows that $$\prod_{i=1}^m(\e f_i(X_i)^{p_i})^{{1}/{p_i}}= \prod_{i\in I\cup I'\cup J'}(\e f_i(X_i)^{p_i})^{{1}/{p_i}}\cdot \prod_{i\in J}(\e f_i(X_i)^{p_i})^{{1}/{p_i}}=0$$ and this yields \eqref{main2}. Suppose that $J=\emptyset.$ Then $I\neq \emptyset$ and $\{1,\ldots,m\}=I\cup I'\cup J'.$ Note that $\e (f_i(X_i)\wedge M)^{p_i}<\infty$ for all $M>0$ and each $i\in I$. Applying the proceeding case \eqref{eq88} to $(f_i\wedge M)_{i\in I}$ and $(f_i)_{i\in I'\cup J'}$ gives
\begin{align*}
\e\prod_{i\in I}(f_i(X_i)\wedge M)\cdot\prod_{i\in I'\cup J'}f_i(X_i)
&\geq\prod_{i\in I}\bigl(\e (f_i(X_i)\wedge M)^{p_i}\bigr)^{\frac{1}{p_i}}
\cdot \prod_{i\in I'\cup J'}\bigl(\e f_i(X_i)^{p_i}\bigr)^{\frac{1}{p_i}}.
\end{align*}
From the monotone convergence theorem, letting $M\uparrow \infty$ leads to  
$$
\prod_{i=1}^m\e f_i(X_i)\geq \prod_{i\in I}\bigl(\e f_i(X_i)^{p_i}\bigr)^{\frac{1}{p_i}}\cdot\prod_{i\in I'\cup J'}\bigl(\e f_i(X_i)^{p_i}\bigr)^{\frac{1}{p_i}}=\infty,
$$
which gives \eqref{main2}. This completes our proof.
\end{proof}

\subsection{Second proof: the Ornstein-Uhlenbeck semigroup}\label{sec3}
In the second proof, we will adapt the ideas from \cite{BC} and \cite{CLL}. 
As in the first proof, we will continue to assume that $T_{ii}= I_{n_{i}}$ 
for $1\leq i\leq m$. Consider the Ornstein-Uhlenbeck semigroup operator 
$(P_t)_{t\geq 0}$ defined on $f:\IR^n\rightarrow\IR$ as
\begin{align}
\label{ou}
 P_tf(x)=\int_{\IR^n} f(e^{-t}x + \sqrt{1-e^{-2t}}y)d\gamma_n(y)
\end{align}
with generator
$$
 L=\Delta -\inner{id_n}{\nabla} .
$$
From the definitions of $P_t$ and $L$, we have
\begin{itemize}
 \item[$(P1)$]   $P_tf \rightarrow \int_{\IR^n}f\,d\gamma$ a.s. as $t\rightarrow \infty$,
 \item[$(P2)$]  $P_0f =f$
\end{itemize}
and the integration by parts formula
\begin{equation}\label{eq.IBP}
 \int_{\IR^n} Lfg\,d\gamma_n =
               -\int_{\IR^n} \inner{\nabla f}{\nabla g}\,d\gamma_n .
\end{equation}

\noindent Moreover, the $g(t,x)=P_tf(x)$ satisfies the PDE
\begin{equation*}\label{eq.wave}
 \frac{\partial g}{\partial t}(t,x)=\Delta g(t,x) -\inner{x}{\nabla g(t,x)}=Lg(t,x),
\end{equation*}
and $F^{(t)}(x)=F(t,x):=\log P_tf(x)$ satisfies
\begin{equation}\label{eq.logwave}
 \frac{\partial F^{(t)}}{\partial t}(x)=LF^{(t)}(x) + |\nabla F^{(t)}(x)|^2,
\end{equation}
where $|x|$ stands for the Euclidean norm of the vector $x$. Our first goal is to prove
the following

\begin{theorem}\label{thm.O-U}
  Let $m \geq n$ and $n_1,\ldots,n_m \leq n$ be positive integers and set 
  $N=n_1+\cdots+n_m$. For every $i=1,\ldots,m,$ consider the $n_i\times n$ 
  matrices ${U}_i$ with ${ U}_i\,{ U}_i^{*}=I_{n_i}$. Set the $N\times n$ 
  matrix $U$ consisting of block rows $U_1,\ldots,U_m$ and the $N\times N$ 
  diagonal matrix $D$ with nonzero entries, 
  $$
  D={\rm diag}\Big(d_1I_{n_1},\ldots,d_mI_{n_m}\Big).
  $$
 For nonnegative Lebesgue measurable functions $f_i$ on $\mathbb{R}^{n_i}$ 
 for $1\leq i\leq m$, we have
 \begin{itemize}
  \item[$(i)$] if $UU^*\leq D^{-1}$, then
             \begin{equation}\label{eq.MultiO-U.Up}
               \int_{\IR^n} \prod_{i=1}^mf_i({ U}_ix)\,d\gamma_n(x)
               \leq
               \prod_{i=1}^m \biggl(\int_{\IR^{n_i}} f_i(x_i)^{1/d_i}\,
               d\gamma_{n_i}(x_i)\biggr)^{d_i};
             \end{equation}  
 \item[$(ii)$] if $UU^*\geq D^{-1}$, then
              \begin{equation}\label{eq.MultiO-U.Down}
               \int_{\IR^n} \prod_{i=1}^m f_i({  U}_ix)\,d\gamma_{n}(x)
               \geq
               \prod_{i=1}^m \biggl(\int_{\IR^{n_i}} f_i(x_i)^{1/d_i}\,
               d\gamma_{n_i}(x_i)\biggr)^{d_i}.
              \end{equation}
 \end{itemize}
\end{theorem}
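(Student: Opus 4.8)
The plan is to run the Ornstein--Uhlenbeck semigroup interpolation, following the scheme of \cite{BC,CLL}. Throughout, $P_t$, $L$, $\Delta$ and $\nabla$ denote the corresponding objects on whichever Euclidean space is relevant (for $\IR^{n_i}$ one uses \eqref{ou} with $n_i$ in place of $n$), the context making clear which is meant; properties $(P1)$, $(P2)$, \eqref{eq.IBP}, \eqref{eq.logwave} hold in every dimension. By an approximation analogous to the one in the first proof of Theorem \ref{main} --- with the $p_i$-moments there replaced by the integrals $\int_{\IR^{n_i}}f_i^{1/d_i}\,d\gamma_{n_i}$, truncating each $f_i$ from below and above and mollifying --- it suffices to establish both inequalities when every $f_i$ is smooth and bounded between two positive constants. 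In that case $f_i^{1/d_i}$ and $P_t(f_i^{1/d_i})$ are again smooth and bounded between positive constants, and $F_i^{(t)}:=\log P_t(f_i^{1/d_i})$ has derivatives of all orders bounded uniformly on $[0,\infty)\times\IR^{n_i}$, which makes every manipulation below legitimate. Define, for $t\geq 0$,
\begin{equation*}
 \Phi(t)=\int_{\IR^n}\prod_{i=1}^m\Bigl(P_t\bigl(f_i^{1/d_i}\bigr)(U_ix)\Bigr)^{d_i}d\gamma_n(x)
        =\int_{\IR^n}\exp G^{(t)}(x)\,d\gamma_n(x),\qquad G^{(t)}(x):=\sum_{i=1}^md_iF_i^{(t)}(U_ix).
\end{equation*}
By $(P2)$, $\Phi(0)=\int_{\IR^n}\prod_{i=1}^mf_i(U_ix)\,d\gamma_n(x)$, the left-hand side of \eqref{eq.MultiO-U.Up}--\eqref{eq.MultiO-U.Down}; by $(P1)$ and dominated convergence, $\lim_{t\to\infty}\Phi(t)=\prod_{i=1}^m\bigl(\int_{\IR^{n_i}}f_i^{1/d_i}\,d\gamma_{n_i}\bigr)^{d_i}$, the right-hand side. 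So everything reduces to showing that $\Phi$ is nondecreasing under the hypothesis of $(i)$ and nonincreasing under that of $(ii)$.

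To that end I would compute $\Phi'(t)$. Differentiating under the integral and using \eqref{eq.logwave} in dimension $n_i$, namely $\partial_tF_i^{(t)}=LF_i^{(t)}+|\nabla F_i^{(t)}|^2$,
\begin{equation*}
 \Phi'(t)=\int_{\IR^n}\Bigl(\sum_{i=1}^md_i\bigl(LF_i^{(t)}\bigr)(U_ix)+\sum_{i=1}^md_i\bigl|(\nabla F_i^{(t)})(U_ix)\bigr|^2\Bigr)e^{G^{(t)}(x)}\,d\gamma_n(x).
\end{equation*}
The algebraic heart of the matter is that, since $U_iU_i^{*}=I_{n_i}$, for any smooth $\varphi$ on $\IR^{n_i}$ one has $\Delta_x(\varphi\circ U_i)=(\Delta\varphi)\circ U_i$ and $\inner{x}{\nabla_x(\varphi\circ U_i)}=\inner{U_ix}{(\nabla\varphi)\circ U_i}$, hence $L(\varphi\circ U_i)=(L\varphi)\circ U_i$, while $\nabla_x(\varphi\circ U_i)=U_i^{*}\bigl((\nabla\varphi)\circ U_i\bigr)$. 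Thus $\sum_{i=1}^md_i\bigl(LF_i^{(t)}\bigr)(U_i\cdot)=LG^{(t)}$ and $\nabla G^{(t)}(x)=U^{*}y(t,x)$, where $y(t,x)\in\IR^N$ is the block vector with $i$-th block $d_i(\nabla F_i^{(t)})(U_ix)$. Applying the integration by parts formula \eqref{eq.IBP} to the pair $\bigl(G^{(t)},e^{G^{(t)}}\bigr)$ gives $\int_{\IR^n}(LG^{(t)})e^{G^{(t)}}\,d\gamma_n=-\int_{\IR^n}|\nabla G^{(t)}|^2e^{G^{(t)}}\,d\gamma_n$. Since moreover $\sum_{i=1}^md_i|(\nabla F_i^{(t)})(U_ix)|^2=\inner{D^{-1}y(t,x)}{y(t,x)}$ and $|\nabla G^{(t)}(x)|^2=\inner{UU^{*}y(t,x)}{y(t,x)}$, we arrive at
\begin{equation*}
 \Phi'(t)=\int_{\IR^n}\bigl\langle\bigl(D^{-1}-UU^{*}\bigr)\,y(t,x),\,y(t,x)\bigr\rangle\,e^{G^{(t)}(x)}\,d\gamma_n(x).
\end{equation*}

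From this the theorem is immediate. If $UU^{*}\leq D^{-1}$ the integrand is $\geq 0$, so $\Phi$ is nondecreasing and $\Phi(0)\leq\lim_{t\to\infty}\Phi(t)$, which is \eqref{eq.MultiO-U.Up}; if $UU^{*}\geq D^{-1}$ the integrand is $\leq 0$, $\Phi$ is nonincreasing, and we get \eqref{eq.MultiO-U.Down}. One then removes the temporary regularity assumptions on the $f_i$ by monotone/dominated convergence as indicated at the start.

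I expect the only genuine work beyond this computation to be analytic bookkeeping on the noncompact space $\IR^n$: justifying differentiation under the integral sign, the vanishing of the boundary terms in \eqref{eq.IBP}, and the $t\to\infty$ limit. All of these follow from the uniform bounds on $F_i^{(t)}$ and its derivatives guaranteed by the reduction to $f_i$ bounded away from $0$ and $\infty$, together with the fact that bounded functions are $\gamma_n$-integrable. It is worth noting that when $N>n$ the matrix $UU^{*}$ is singular, so in case $(ii)$ the condition $UU^{*}\geq D^{-1}$ forces some $d_i<0$; the derivation of $\Phi'(t)$ above is purely formal in the signs of the $d_i$ and is unaffected, which is precisely the mechanism producing the reverse inequality.
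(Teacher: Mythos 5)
Your proof is correct and is essentially the paper's own argument: the same Ornstein--Uhlenbeck interpolant $t\mapsto\Phi(t)$, the same use of $U_iU_i^*=I_{n_i}$ to commute $L$ with the substitution $x\mapsto U_ix$, the same integration by parts, and the same quadratic-form reduction of the sign of $\Phi'$ to $UU^*\lessgtr D^{-1}$ via the block vector $y$ with $y_i=d_i\nabla F_i^{(t)}(U_ix)$. The only cosmetic difference is that the paper works with $P_tf_i$ and obtains the theorem for $\int\prod f_i^{d_i}(U_ix)\,d\gamma_n$, leaving the substitution $f_i\mapsto f_i^{1/d_i}$ implicit, whereas you build that substitution into the definition of $F_i^{(t)}$ from the outset.
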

\begin{proof}
As we have discussed in the first proof of Theorem \ref{main} or referring to the approximation procedure in \cite{BC}, we may assume without loss of generality that $f_i$'s are smooth and uniformly bounded from above and away from zero on $\mathbb{R}^n$. For $1\leq i\leq m$, set
$$
F_i^{(t)}(x_i)= \log P_t f_i(x_i),\quad x_i\in\IR^{n_i}.
$$
For $t\in[0,\infty),$ we consider
\begin{align*}
 a(t)&:=\int_{\IR^n}\prod_{i=1}^m P_tf_i({  U}_ix)^{d_i}\,d\gamma_n(x)
     =\int_{\IR^n}
       \exp\biggl(\sum_{i=1}^m d_i F_i^{(t)}({ U}_ix)\biggr)\,d\gamma_n(x).
\end{align*}
Note that from $(P1)$ and $(P2),$
\begin{align*}
\lim_{t\rightarrow\infty} a(t)
&=\prod_{i=1}^m\left(\int_{\IR^{n_i}}f_i\,d\gamma_{n_i}\right)^{d_i}
\,\,\mbox{and}\,\,
a(0)=\int_{\IR^n}\prod_{i=1}^m f_i({ U}_ix)^{d_i}\,d\gamma_n(x).
\end{align*}
Thus, it is enough to show that, under condition $UU^*\leq D^{-1}$
(resp. $UU^*\geq D^{-1}$), $a(t)$ is increasing (resp. decreasing).
To do so, we compute its derivative:
\begin{align*}
a'(t)&=\int_{\IR^n} \sum_{i=1}^m d_i
      \bigl( LF_i^{(t)}({  U}_ix)+\big|\nabla F_i^{(t)}({  U}_ix)\big|^2\bigr)\\
      &\qquad\qquad\qquad
      \exp\biggl(\sum_{i=1}^m d_iF_i^{(t)}({ U}_ix)\biggr)\,d\gamma_n(x),
\end{align*}
where we used (\ref{eq.logwave}) in dimension $n_i$ for all $i=1,\ldots,m$.
Let $t$ be fixed. Set $F_i=F_i^{(t)}$ and
$H_i=F_i\circ{U}_i.$ Since ${U}_i{ U}_i^*=I_{n_i}$,
we have that
\begin{align*}
 LH_i(x)&= \Delta H_i(x) - \inner{x}{\nabla H_i(x)}\\
        &= \Delta F_i({U}_ix) - \inner{{U}_ix}{\nabla F_i({ U}_ix)}\\
        &= LF_i({U}_ix)  .
\end{align*}
Thus, we can use the $n$-dimensional integration by parts formula (\ref{eq.IBP}) 
for the functions $H_i(x)$ and $G(x):=\exp\bigl(\sum_{i=1}^m d_i F_i({U}_ix)\bigr)$ 
to get that for every $1\leq i\leq m,$
\begin{align*}
 &\int_{\IR^n}LF_i({U}_ix)\,G(x)\,d\gamma_n(x)\\
 &=\int_{\IR^n}LH_i(x)\,G(x)\,d\gamma_n(x)\\
  &= -\int_{\IR^n} \inner{\nabla H_i(x)}{\nabla G(x)}\,d\gamma_n(x)\\
  &= -\int_{\IR^n}\sum_{j=1}^m d_j\,\Big\langle\nabla F_i({ U}_ix)
           { U}_i{U}_j^*\nabla F_j({U}_jx)\Big\rangle\exp
           \biggl(\sum_{i=1}^m d_i F_i({  U}_ix)\biggr)d\gamma_n(x).
\end{align*}

\noindent It follows that
\begin{align*}
 a'(t)&=\int_{\IR^n} \biggl(
       -\sum_{i=1}^m\sum_{j=1}^m d_id_j
       \left<\nabla F_i({ U}_ix),
       { U}_i{ U}_j^*\nabla F_j({ U}_jx)\right> \\
       &\qquad\qquad\biggl.+\sum_{i=1}^m d_i \big|\nabla F_i({ U}_ix)\big|^2 \biggr)
       \exp\biggl(\sum_{i=1}^m d_i F_i({ U}_ix)\biggr)\,d\gamma_n(x)
\end{align*}
or equivalently,
\begin{align*}\label{eq.deriv}
 a'(t)&=\int_{\IR^n} \biggl(-\bigg|\sum_{i=1}^m d_i{U}_i^*\nabla F_i({ U}_ix)\bigg|^2
       +\sum_{i=1}^m d_i\big|\nabla F_i({ U}_ix)\big|^2 \biggr)\\
       & \qquad\qquad\qquad\exp\biggl(\sum_{i=1}^m d_i F_i({  U}_ix)\biggr)d\gamma_n(x) .
\end{align*}

\noindent This implies that the proof will be complete if we show that
\begin{itemize}
 \item[$(i)$]  $UU^* \leq D^{-1}$ if and only if
             \begin{equation}\label{eq.l1}
               \bigg|\sum_{i=1}^m d_i\,{U}_i^*\xi_i\bigg|^2
               \leq \sum_{i=1}^m d_i \big|\xi_i\big|^2 ,
               \,\,\forall\;\xi_i\in \IR^{n_i} .
              \end{equation}
 \item[$(ii)$] $UU^* \geq D^{-1}$ if and only if
             \begin{equation}\label{eq.l2}
               \bigg|\sum_{i=1}^m d_i\,{ U}_i^*\xi_i\bigg|^2
               \geq \sum_{i=1}^m d_i \big|\xi_i\big|^2 ,
               \,\,\forall\;\xi_i\in \IR^{n_i} .
              \end{equation}
\end{itemize}
To check \eqref{eq.l2}, we write $\xi=(\xi_1,\ldots,\xi_m)\in\IR^N$ with
$\xi_i\in\IR^{n_i}$, and then we have that
\begin{align*}
 UU^* \geq D^{-1}
 &\Leftrightarrow\inner{UU^*x}{x} \geq \inner{D^{-1}x}{x},\,\, \forall\,x\in\IR^N\\
 \big(x=D\xi\big)
 &\Leftrightarrow \inner{UU^*D\xi}{D\xi} \geq \inner{\xi}{D\xi},\,\,\forall\,\xi\in\IR^N\\
 &\Leftrightarrow |U^*D\xi|^2 \geq \inner{\xi}{D\xi},\,\,\forall\,\xi\in\IR^N\\
 &\Leftrightarrow \bigg|\sum_{i=1}^m d_i\,{ U}_i^*\xi_i\bigg|^2
                   \geq \sum_{i=1}^m d_i \big|\xi_i\big|^2,\,\,\forall\;\xi_i\in \IR^{n_i}.
\end{align*}
The verification of \eqref{eq.l1} is identical.
\end{proof}

\noindent Next, we will show how Theorem \ref{main} can be obtained from Theorem
\ref{thm.O-U}. First, we need a standard linear algebra fact.

\begin{lemma}\label{lemma.Umatrix}
 Let $n,N$ be positive integers. Let $T$ be a $N\times N$ symmetric and positive
 semi-definite matrix with ${\rm rank}(T)=n$. Then there exists a
 $N\times n$ matrix $U=U(T)$ with ${\rm rank}(U)={\rm rank}(T)=n$ such that
 $T=UU^{*}$. Moreover, $U$ is unique up to an orthogonal transformation.
\end{lemma}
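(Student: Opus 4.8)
The statement to prove is the standard linear-algebra fact in Lemma~\ref{lemma.Umatrix}: every symmetric positive semi-definite $N\times N$ matrix $T$ of rank $n$ factors as $T=UU^*$ for some $N\times n$ matrix $U$ of full rank $n$, unique up to an orthogonal transformation on the right.

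The plan is to use the spectral theorem. First I would diagonalize $T$: since $T$ is symmetric and positive semi-definite, there is an orthogonal $N\times N$ matrix $O$ such that $T=O\Lambda O^*$, where $\Lambda={\rm diag}(\lambda_1,\ldots,\lambda_n,0,\ldots,0)$ with $\lambda_1,\ldots,\lambda_n>0$ (there are exactly $n$ positive eigenvalues because ${\rm rank}(T)=n$). Let $O_0$ be the $N\times n$ matrix consisting of the first $n$ columns of $O$ (an orthonormal basis of the column space of $T$), and set $\Lambda_0={\rm diag}(\lambda_1,\ldots,\lambda_n)$, which is invertible. Then $T=O_0\Lambda_0 O_0^*$, and defining $U:=O_0\Lambda_0^{1/2}$ gives an $N\times n$ matrix with $UU^*=O_0\Lambda_0 O_0^*=T$. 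Since $\Lambda_0^{1/2}$ is invertible and $O_0$ has rank $n$, we get ${\rm rank}(U)=n$, as required.

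For uniqueness, suppose $U_1,U_2$ are $N\times n$ of rank $n$ with $U_1U_1^*=U_2U_2^*=T$. Then the column spaces of $U_1$ and $U_2$ both equal the column space of $T$ (an $n$-dimensional subspace of $\IR^N$), so $U_2=U_1 R$ for a unique invertible $n\times n$ matrix $R$. Substituting, $U_1U_1^*=U_2U_2^*=U_1RR^*U_1^*$, hence $U_1(I_n-RR^*)U_1^*=0$; since $U_1$ has trivial kernel on the left (i.e. $U_1^*$ is injective, equivalently $U_1$ has rank $n$), this forces $RR^*=I_n$, so $R$ is orthogonal. Conversely, if $U$ works then so does $UR$ for any orthogonal $R$, since $(UR)(UR)^*=URR^*U^*=UU^*=T$. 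This completes the proof.

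I do not expect any real obstacle here; the only mild point of care is bookkeeping the reduction from the full $N\times N$ spectral decomposition to the $N\times n$ matrices $O_0,\Lambda_0$, and in the uniqueness argument justifying that $U_2=U_1R$ (which uses that the columns of $U_2$ lie in the span of the columns of $U_1$, together with $U_1$ having independent columns). Both are routine.
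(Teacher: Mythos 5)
Your existence argument is essentially the paper's: they write the spectral/singular-value decomposition $T=VLV^*$ with $L={\rm diag}(\lambda_1,\dots,\lambda_n,0,\dots,0)$, form $V\sqrt L$, and read off $U$ as the nonzero $N\times n$ block — exactly your $U=O_0\Lambda_0^{1/2}$ in slightly different notation. The difference is in the uniqueness step. The paper works with the \emph{rows} $u_1,\dots,u_N$ and $v_1,\dots,v_N$ of $U$ and $V$: it defines $\Phi:\IR^n\to\IR^n$ by $\Phi u_i=v_i$ and checks $\Phi\in O(n)$ from $\inner{u_i}{u_j}=\inner{v_i}{v_j}$. (This construction silently requires a well-definedness check — the $u_i$ are generally linearly dependent since $N\ge n$ — which the same inner-product identity supplies.) You instead observe that the column spaces of $U_1$ and $U_2$ coincide with that of $T$, write $U_2=U_1R$ for a unique invertible $R$, and cancel $U_1$ and $U_1^*$ in $U_1(I_n-RR^*)U_1^*=0$ to get $RR^*=I_n$. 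Your route is cleaner and avoids the well-definedness subtlety. One small imprecision worth fixing: the parenthetical ``$U_1^*$ is injective'' is wrong for $N>n$ — $U_1^*:\IR^N\to\IR^n$ is \emph{surjective} there, not injective. What you actually use is that $U_1$ has full column rank $n$, hence admits a left inverse $U_1^\dagger$; multiplying $U_1(I_n-RR^*)U_1^*=0$ on the left by $U_1^\dagger$ and on the right by $(U_1^\dagger)^*$ gives $I_n-RR^*=0$.
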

\begin{proof}
 The existence of $U$ is guaranteed from the singular value decomposition of $T$.
 More precisely, let us order the eigenvalues $\lambda_1,\ldots,\lambda_N$ of
 $T^2=T^*T=TT^*$ such that 
 $\lambda_1\geq\cdots\geq\lambda_n>0=\lambda_{n+1}=\cdots=\lambda_N$
 and let $v_1,\ldots,v_N$ be the corresponding
 eigenvectors. Consider the $N\times N$ matrices $V=(v_1,\ldots,v_N)$ and
 $L={\rm diag}(\lambda_1,\ldots,\lambda_n,0,\ldots,0)$. Then from the singular value
 decomposition, we have that
 $$
 T=VLV^{*}=(V\sqrt{L})(V\sqrt{L})^{*}=:V_L\,V_L^{*} ,
 $$
 where $V_L:=V\sqrt{L}$. We write
 \begin{align*}
 V_L &=
 \left(
\begin{array}{ccc}
     v_1    & \ldots &  v_N  
\end{array}
\right)
{\rm diag}(\sqrt{\lambda_1},\ldots,\sqrt{\lambda_n},0,\ldots,0) \\
&=\left(
\begin{array}{cccccc}
 \sqrt{\lambda_1}v_1 & \ldots &  \sqrt{\lambda_n}v_n &  
 \mathbb{O}_{N\times 1}&\ldots&\mathbb{O}_{N\times 1} 
\end{array}
\right) \\
&=\left(
\begin{array}{cc}
 u_1   &  \mathbb{O}_{1\times(N-n)}   \\
\vdots &  \vdots \\
 u_N   & \mathbb{O}_{1\times(N-n)}  \\
\end{array}
\right)
=:\left(
\begin{array}{cc}
 U   &  \mathbb{O}_{N\times(N-n)} 
\end{array}
\right),
\end{align*}
and so
\begin{equation*}
 T=V_L\,V_L^{*}=\Bigl(\inner{u_i}{u_j}\Bigr)
 = U\,U^{*} ,
\end{equation*}
where $U$ is the $N\times n$ matrix with rows $u_1,\ldots,u_N$.

For the uniqueness of $U$, we need to show that if $V$ is a $N\times n$ 
matrix with $VV^*=T=UU^*$, then $\Phi U^*=V^*$ for some $\Phi \in O(n)$, 
orthogonal transformation in $\IR^n$. If we write $v_1,\ldots,v_N$ for the 
rows of $V$ we have that $\IR^n={\rm span}\{u_1,\ldots,u_N\}={\rm span}\{v_1,\ldots,v_N\}$.
Define the linear transformation $\Phi:\IR^n\rightarrow\IR^n$ such that $\Phi u_i={ v}_i$
for all $i=1,\ldots,N$ or equivalently $\Phi U^*=V^*$. With this construction, one clearly
sees that $\Phi\in O(n)$. Indeed, by definition,
$$
\inner{\Phi u_i}{\Phi u_j}=\inner{{v}_i}{{v}_j}=\inner{u_i}{u_j}
$$
for  $1\leq i,j\leq N$ and so
\begin{eqnarray*}
 \inner{\Phi x}{\Phi x}
 = \sum_{i=1}^N\sum_{j=1}^N a_i a_j\inner{\Phi u_i}{\Phi u_j}
 = \sum_{i=1}^N\sum_{j=1}^N a_i a_j\inner{u_i}{u_j}
 = \inner{x}{x}
\end{eqnarray*}
for every $x=\sum_{i=1}^N a_i u_i\in\mathbb{R}^n$. This completes our proof.
\end{proof}

\noindent We are now ready to complete the second proof of our main result:

\begin{proof}[Second proof of Theorem \ref{main}]
 Without loss of generality we can assume that $p_i$'s are non-zero. 
 Recall that we have assumed that $T_{11}=I_{n_1},\ldots,T_{mm}=I_{n_m}.$ 
 Let $n={\rm rank}(T).$ From Lemma \ref{lemma.Umatrix}, there exists a 
 $N\times n$ matrix $U$ such that $T=UU^*$. We denote by 
 $u_1^i,\ldots,u_{n_i}^i$ the rows of $U_i$ and by $U$ the $N\times n$ 
 matrix with block rows $U_1,\ldots,U_m$. Since
  $$
  T=UU^*=({U}_i{ U}_j^*)_{i,j\leq m},
  $$
  we have that ${ U}_i{ U}_i^*=T_{ii}=I_{n_i}$ for $1\leq i\leq m$.
  On the other hand, observe that $\big(X_{ij}:1\leq i\leq m,1\leq j\leq n_i\big)$
  and $\big(\inner{Z}{u^i_j}:1\leq i\leq m,1\leq j\leq n_i\big)$ are identically
  distributed, where
  ${Z}$ is a $n$-dimensional standard Gaussian random vector. So
  $$
  {\bf X}=
  \left(\begin{array}{c}
  { X}_1\\
  \vdots\\
  { X}_m
  \end{array}\right)
  \stackrel{d}{=}
   \left(\begin{array}{c}
  { U}_1{Z}\\
  \vdots\\
  { U}_m{Z}
  \end{array}\right)
  =U{Z}.
  $$
  Thus, we have that
  $$
  \IE \prod_{i=1}^m f_i({ X}_i) = \IE \prod_{i=1}^m f_i({ U}_i{ Z})
  = \int_{\IR^n} \prod_{i=1}^mf_i({U}_ix)d\gamma_n(x)
  $$
  and Theorem \ref{main} follows immediately from Theorem \ref{thm.O-U}.
\end{proof}

Actually, it is easy to show that Theorem \ref{main} implies Theorem
\ref{thm.O-U}. Indeed, if $U$ and $D$ are as in Theorem \ref{thm.O-U}, then
$T=UU^*$ and $P=D^{-1}$ satisfy the assumptions of Theorem \ref{main}.
Working as in the previous proof, Theorem \ref{thm.O-U} follows.

\medskip

\section{The Brascamp-Lieb inequality, the geometry of eligible exponents
          and Gaussian hypercontractivity}\label{sec4}

This section will be concentrated on Theorem \ref{main}(i). The equivalence between 
this bound and the geometric form of the Brascamp-Lieb inequality for Gaussian measures 
will first be established. Next, we turn to the study of some geometric properties of 
the eligible exponents in Theorem \ref{main}(i). We close this section by showing that 
theorem \ref{main} generalize the Gaussian hypercontractivity and its reverse form.

\subsection{Connection to the Brascamp-Lieb inequality}

The main objective of this subsection is to show that Theorem \ref{main}(i) is 
a reformulation of the geometric Brascamp-Lieb inequality for Gaussian 
measures, which is stated below.

\begin{theorem}\label{G-BL}
 Assume that $n \leq m$ and $n_1,\ldots,n_m \leq n$ are positive integers. For every
 $i=1,\ldots,m$, consider the $n_i\times n$ matrices ${ U}_i$ with
 ${ U}_i\,{ U}_i^{*}=I_{n_i}$ and $p_i >0$ such that
 \begin{equation}\label{eq.John2}
  U^*P^{-1}U = I_n,
 \end{equation}
 where $P:=\mbox{diag}(p_1I_{n_1},\ldots,p_m I_{n_m}).$
 Then for measurable function $f_i:\IR^{n_i}\rightarrow [0,\infty)$ for $i=1,\ldots,m,$
 one has that
 \begin{equation}\label{eq.G-BL}
  \int_{\IR^n} \prod_{i=1}^m f_i({ U}_ix)\,d\gamma_n(x)
  \leq \prod_{i=1}^m\|f_i\|_{L_{p_i}(\gamma_{n_i})},
\end{equation}
where the notation $\gamma_k$ means the $k$-dimensional standard Gaussian measure 
on $\mathbb{R}^k.$
\end{theorem}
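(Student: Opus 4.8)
The plan is to show that Theorem~\ref{G-BL} is a special case of Theorem~\ref{main}(i) once we recast the geometric John-type condition \eqref{eq.John2} into the matrix inequality $T\le P$ appearing in Theorem~\ref{main}. First I would set $T:=UU^*$, where $U$ is the $N\times n$ matrix with block rows $U_1,\ldots,U_m$. Writing $T_{ij}=U_iU_j^*$, the hypothesis $U_iU_i^*=I_{n_i}$ gives exactly $T_{ii}=I_{n_i}$, so the block-diagonal matrix from Theorem~\ref{main} is $P=\mathrm{diag}(p_1I_{n_1},\ldots,p_mI_{n_m})$, matching the $P$ here. Since $T=UU^*$ is positive semi-definite with rank $n$, Lemma~\ref{lemma.Umatrix} tells us $T$ arises as the covariance of $U\mathbf Z$ for a standard $n$-dimensional Gaussian $\mathbf Z$; in particular the random vector $\mathbf X=(U_1\mathbf Z,\ldots,U_m\mathbf Z)$ is a centered $N$-dimensional Gaussian with covariance $T$ and with each block $X_i=U_i\mathbf Z$ standard Gaussian on $\IR^{n_i}$. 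Hence the left side of \eqref{eq.G-BL} equals $\IE\prod_{i=1}^m f_i(X_i)$ and the right side equals $\prod_i(\IE f_i(X_i)^{p_i})^{1/p_i}$, so the two sides of \eqref{eq.G-BL} are literally the two sides of \eqref{main1} for this Gaussian vector.

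The remaining point — and the only genuine content — is to check that condition \eqref{eq.John2}, namely $U^*P^{-1}U=I_n$, forces $T\le P$, i.e. $P-UU^*\ge 0$. The clean way is a projection argument: consider $Q:=P^{-1/2}UU^*P^{-1/2}$, a symmetric positive semi-definite $N\times N$ matrix. Condition \eqref{eq.John2} says $U^*P^{-1}U=(P^{-1/2}U)^*(P^{-1/2}U)=I_n$, so the $N\times n$ matrix $W:=P^{-1/2}U$ has orthonormal columns; therefore $Q=WW^*$ is an orthogonal projection onto an $n$-dimensional subspace of $\IR^N$, and in particular $Q\le I_N$. Multiplying the inequality $WW^*\le I_N$ on both sides by $P^{1/2}$ yields $UU^*\le P$, that is $T\le P$. (Equivalently, for any $x\in\IR^N$, writing $y=P^{-1/2}x$ one has $\langle Tx,x\rangle=|U^*P^{-1/2}y|^2\cdot$\,… after regrouping $=\langle WW^*y,y\rangle\le |y|^2=\langle P^{-1}x,x\rangle$, hence $\langle Px,x\rangle\ge\langle UU^*x,x\rangle$.)

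With $T\le P$ established, Theorem~\ref{main}(i) applied to the Gaussian vector $\mathbf X$ constructed above gives \eqref{main1}, which is exactly \eqref{eq.G-BL}. I would also note the converse direction for completeness — that Theorem~\ref{main}(i), restricted to covariances of the form $UU^*$ with the normalization $T_{ii}=I_{n_i}$ and with \eqref{eq.John2} in force, is precisely Theorem~\ref{G-BL}, so the two statements are equivalent rather than merely one implying the other; this uses Lemma~\ref{lemma.Umatrix} again to realize an arbitrary $T$ of rank $n$ with unit-matrix diagonal blocks in the form $UU^*$ with $U_iU_i^*=I_{n_i}$.

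I do not expect a serious obstacle here: the identification of the two sides of the inequality via Lemma~\ref{lemma.Umatrix} is routine, and the heart of the matter is the one-line linear-algebra observation that $U^*P^{-1}U=I_n$ is equivalent to $P^{-1/2}U$ having orthonormal columns, which immediately gives $UU^*\le P$. The only place to be slightly careful is bookkeeping with the square roots $P^{\pm1/2}$ (legitimate since $P>0$ because all $p_i>0$) and making sure the rank/dimension constraint $n\le m$ and $n_i\le n$ is consistent with $P^{-1/2}U$ having $n$ orthonormal columns inside $\IR^N$, i.e. $n\le N$, which holds automatically.
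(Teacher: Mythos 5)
Your proposal is correct and follows essentially the same route as the paper: identify $T=UU^*$ (so $T_{ii}=U_iU_i^*=I_{n_i}$), observe via $W=P^{-1/2}U$ that $U^*P^{-1}U=I_n$ forces $UU^*\le P$, and then invoke Theorem~\ref{main}(i) after realizing $\gamma_n$-integrals of $\prod f_i(U_ix)$ as $\IE\prod f_i(X_i)$ for $\mathbf X=U\mathbf Z$. The paper routes this through the intermediate Theorem~\ref{thm.O-U}(i) and proves the implication $U^*P^{-1}U\le I_n\Leftrightarrow UU^*\le P$ by comparing eigenvalues rather than by your orthonormal-columns/projection phrasing, but the content is the same; note only that your parenthetical substitution should read $x=P^{-1/2}y$ (not $y=P^{-1/2}x$) for the claimed identities to hold, and that Lemma~\ref{lemma.Umatrix} is not actually needed in this forward direction since $U$ is given.
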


 Note that as we have seen from Subsection \ref{sec3}, Theorem \ref{thm.O-U}(i) is 
 equivalent to Theorem \ref{main}(i). To attend our goal, it suffices to establish 
 the equivalence between Theorem \ref{thm.O-U}(i) and Theorem \ref{G-BL}. The argument 
 that the second implies the first is simple. Indeed, assuming that the assumptions 
 $U_iU_i^*=I_{n_i}$ for $i\leq m$ and \eqref{eq.G-BL} holds for some $p_1,\ldots,p_m>0$, 
 one sees that
 \begin{align}
 \begin{split} \label{arg}
  U^*P^{-1}U = I_n &\Rightarrow U^*P^{-1}U \leq I_n \\
  &\Leftrightarrow \lambda_1(U^*P^{-1}U) \leq 1 \\
  &\Leftrightarrow \lambda_1(P^{-1/2}UU^*P^{-1/2}) \leq 1 \\
  &\Leftrightarrow P^{-1/2}UU^*P^{-1/2} \leq I_N \\
  &\Leftrightarrow UU^* \leq P,
 \end{split}
 \end{align}
where $\lambda_1(A):=\norm{A}_{OP}$, the largest eigenvalue of the real symmetric 
matrix $A$. Consequently, the assumptions of Theorem \ref{thm.O-U}(i) are satisfied 
by $U_i$'s and $d_i:=p_i^{-1}$ for $i\leq m$ and thus \eqref{eq.G-BL} follows from 
\eqref{eq.MultiO-U.Up}.

\smallskip

As for the reverse direction, recall $U$ and $D$ from Theorem \ref{thm.O-U}(i) and
set $P=D^{-1}.$ Then $UU^*\leq P$ is equivalent to $\|A\|_{OP}\leq 1$ 
for $A:=U^*P^{-1}U.$ Let $\lambda_1,\ldots,\lambda_n\geq 0$ be the eigenvalues 
of $A$ listed in non-increasing order and $\theta_1\ldots,\theta_n$ be the 
corresponding orthonormal eigenvectors. Let $k$ be the largest integer such 
that $\lambda_1=\cdots=\lambda_k.$ Consider the decomposition of the identity 
matrix $I_n$,
 \begin{align} \label{decompId}
 \sum_{i=1}^m \frac{1}{p_i\lambda_1} U_i^* U_i+\sum_{i=k+1}^n 
 \left(1-\frac{\lambda_i}{\lambda_1}\right) \theta_i\theta_i^*=I_n,
 \end{align}
 where the validity of this identity can be easily checked by showing that both 
 sides agree on $\theta_1,\ldots,\theta_n.$ 
 If $\lambda_1<1,$ this equation may as well be written as
  \begin{align*}
  \sum_{i=1}^m \frac{1}{p_i} U_i^* U_i+\sum_{i=k+1}^n 
  \left(1-\frac{\lambda_i}{\lambda_1}\right) \theta_i\theta_i^* +
  \sum_{i=1}^m \frac{1}{p_i}\left(\frac{1}{\lambda_1}-1\right) U_i^* U_i=I_n.
  \end{align*}
 Note that the coefficient terms in the last two equations are all positive since 
 $\lambda_1=\|A\|_{OP}\leq 1$. To sum up, there exists  some $\nu\in \mathbb{N}\cup\{0\}$ 
 such that there are $k_j\times n$ matrix ${B}_j$ with ${B}_j{B}_j^*=I_{k_j}$ and $b_j>0$ 
 for $1\leq j\leq \nu$ satisfying
  \begin{align}\label{sec4:eq1}
  \sum_{i=1}^m \frac{1}{p_i}{ U}_i^*U_i
         +\sum_{j=1}^{\nu} \frac{1}{ b_j}B_j^*B_j=I_n,
  \end{align}
 where $\sum_{j=1}^{0}b_j^{-1}B_j^*B_j$ is read as the $n$-dimensional zero matrix.
 For given nonnegative measurable function $f_i$ on $\mathbb{R}^{n_i}$ for $i\leq m,$ 
 we set $g_1=f_1,\ldots,g_m=f_m,g_{m+1}=1,\ldots,g_{m+\nu}=1.$ Since $p_i$'s and 
 $b_j$'s satisfy \eqref{sec4:eq1}, we may apply these $g_i$'s to \eqref{eq.G-BL} 
 to obtain \eqref{eq.l1} by noting that $\|g_i\|_{L_{b_i}(\gamma_{k_i})}=1$ for all 
 $m+1\leq i\leq m+\nu.$ This completes our argument.

\subsection{The geometry of the eligible exponents.} 

Let $n\leq N$ and $U$ be a $N\times n$ matrix with ${\rm rank}(U)=n$.
Assume that $U$ has as block-rows the $n_i\times n$ matrices $U_i$, $1\leq i \leq m$, 
with $U_iU_i^*=I_{n_i}$. Then we define 
\begin{equation}\label{def.C(T)}
 \mathcal{C}(U)=\left\{(c_1,\ldots,c_m):\;UU^* \leq C^{-1}\right\}
\end{equation}
where $C={\rm diag}(c_1I_{n_1},\ldots,c_mI_{n_m})$. By the discussion in the last 
subsection we have that if $(c_1,\ldots,c_m)\in\mathcal{C}(U)$ then $c_1,\ldots,c_m$ 
satisfy \eqref{sec4:eq1}. On the other hand if an m-tuple $c_1,\ldots,c_m$ satisfies
\eqref{sec4:eq1} then trivially, $U^*CU=\sum_{i=1}^mc_iU_i^*U_i \leq I_n$ and by 
\eqref{arg} this means that $(c_1,\ldots,c_m)\in\mathcal{C}(U)$. Thus, we have 
proved that 
\begin{equation}
 (c_1,\ldots,c_m)\in\mathcal{C}(U)\quad
 \text{if and only if}\quad (c_1,\ldots,c_m)\;\text{satisfies \eqref{sec4:eq1}}
\end{equation}

Next we gather some interesting properties for the set $\mathcal{C}(U)$.

\begin{proposition}\label{lemma.C(T)}
 Let $U$ be the $N\times n$ matrix defined in Definition \ref{def.C(T)}.
 \begin{itemize}
 \item[$(i)$]   Let $V$ be a matrix with the same size as $U$ and $UU^*=VV^*$, 
                then $\mathcal{C}(U)=\mathcal{C}(V)$.
 \item[$(ii)$]  Define the $n_{i} \times N$ matrix 
                $R_{i}= \left({\bf 0}_{n_i\times n_1},
                \ldots, I_{n_{i}}, \ldots, {\bf 0}_{n_i\times n_m}\right)$,
                for $1\leq i\leq m$. For $\sigma\subseteq \{1, \ldots, m\}$, 
                set the $\left(\sum_{i\in \sigma} n_{i}\right)\times N $ matrix
                $\Xi_{\sigma}:= \left([ R_{i}^*]; i\in\sigma\right)^{*},$ i.e. 
                the matrices
                $R_i$, $i\in\sigma$ are the block-rows of $\Xi_\sigma$. Then
                $$
                P_{\sigma} \mathcal{C}(U) \subseteq \mathcal{C}( \Xi_{\sigma} U),
                $$
                where $P_{\sigma}$ denotes the projection from $\mathbb{R}^m$ to 
                $\mathbb{R}^{\sigma}$ through $P_\sigma(c)=(c_i)_{i\in\sigma}$.
 \item[$(iii)$] $C(U)$ is a convex subset of $\IR^m$ and
                $$
                \biggl\{ x\in \mathbb [0,\infty)^{m}:
                \sum_{i=1}^{m} x_{i} \leq 1\biggr\} 
                \subseteq \mathcal{C}(U) \subseteq [0,1]^{m}.
                $$
 \item[$(iv)$]  If $(c_{1}, \ldots, c_{m}) \in \mathcal{C}(U)$ and 
                $\lambda_{1},\ldots,\lambda_{m}\in[0,1]$, then
                \begin{equation}\label{eq.kouti}
                  (\lambda_{1} c_{1}, \ldots, \lambda_{m} c_{m}) \in \mathcal{C}(U).
                \end{equation}             
 \end{itemize}
\end{proposition}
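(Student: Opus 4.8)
The plan is to reduce every item to a short Loewner‑order computation, after replacing the defining condition in \eqref{def.C(T)} by the equivalent one isolated just above the proposition: a tuple $(c_1,\dots,c_m)$ belongs to $\mathcal C(U)$ precisely when $c_i\ge 0$ for all $i$ and
\[
U^*CU=\sum_{i=1}^m c_i\,U_i^*U_i\ \le\ I_n ,
\]
where $C=\mathrm{diag}(c_1I_{n_1},\dots,c_mI_{n_m})$; here the equivalence $UU^*\le C^{-1}\Leftrightarrow U^*CU\le I_n$ is the one already used in \eqref{arg}, the sign constraint $c_i\ge 0$ comes from inspecting the diagonal blocks of the matrix inequality, and we adopt the convention (implicit in part $(iii)$) that $c_i=0$ is permitted. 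The single structural fact I will lean on is that each $U_i^*U_i$ is an orthogonal projection — indeed $(U_i^*U_i)^2=U_i^*(U_iU_i^*)U_i=U_i^*U_i$ since $U_iU_i^*=I_{n_i}$ — so that $0\le U_i^*U_i\le I_n$ and $1$ is an eigenvalue of $U_i^*U_i$ (as $\mathrm{rank}(U_i^*U_i)=n_i\ge 1$).

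Part $(i)$ is immediate: the condition in \eqref{def.C(T)} depends on $U$ only through $UU^*$, and if $VV^*=UU^*$ then $V_iV_i^*=(VV^*)_{ii}=I_{n_i}$ and $\mathrm{rank}(V)=\mathrm{rank}(VV^*)=n$, so $\mathcal C(V)$ is well defined and the two defining inequalities coincide verbatim. (Alternatively, by Lemma \ref{lemma.Umatrix} one has $V=U\Phi^*$ for some $\Phi\in O(n)$, and conjugation by $\Phi$ preserves $U^*CU\le I_n$.) For part $(ii)$, first record the bookkeeping identity $R_iU=U_i$, so that $\Xi_\sigma U$ is exactly the matrix with block rows $(U_i)_{i\in\sigma}$ and has identities as diagonal blocks; thus $\mathcal C(\Xi_\sigma U)$ is meaningful and is described by $\sum_{i\in\sigma}c_iU_i^*U_i\le I_n$, $c_i\ge 0$. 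If $(c_1,\dots,c_m)\in\mathcal C(U)$, then $c_i\ge 0$ and, discarding the positive semidefinite summands indexed by $i\notin\sigma$,
\[
\sum_{i\in\sigma}c_i\,U_i^*U_i\ \le\ \sum_{i=1}^m c_i\,U_i^*U_i\ \le\ I_n ,
\]
which says $P_\sigma(c_1,\dots,c_m)\in\mathcal C(\Xi_\sigma U)$.

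For part $(iii)$, convexity is clear since $c\mapsto\sum_i c_iU_i^*U_i$ is linear, the set $\{A=A^*:A\le I_n\}$ is convex, and so is $[0,\infty)^m$. The inclusion $\mathcal C(U)\subseteq[0,1]^m$ follows by isolating a block: from $\sum_j c_jU_j^*U_j\le I_n$ and positivity of the remaining summands we get $c_iU_i^*U_i\le I_n$, and testing against a unit eigenvector of the projection $U_i^*U_i$ with eigenvalue $1$ gives $c_i\le 1$. For the reverse inclusion, if $x_i\ge 0$ and $\sum_i x_i\le 1$ then, using $U_i^*U_i\le I_n$,
\[
\sum_{i=1}^m x_i\,U_i^*U_i\ \le\ \Bigl(\sum_{i=1}^m x_i\Bigr)I_n\ \le\ I_n ,
\]
so $(x_1,\dots,x_m)\in\mathcal C(U)$. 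Finally, part $(iv)$ is the same monotonicity: given $(c_1,\dots,c_m)\in\mathcal C(U)$ and $\lambda_i\in[0,1]$, we have $\lambda_ic_i\ge 0$ and $(1-\lambda_i)c_iU_i^*U_i\ge 0$, hence $\sum_i\lambda_ic_iU_i^*U_i\le\sum_i c_iU_i^*U_i\le I_n$, i.e.\ $(\lambda_1c_1,\dots,\lambda_mc_m)\in\mathcal C(U)$.

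I do not expect a genuine obstacle here: the only step that demands a moment's care is setting up the correct working description of $\mathcal C(U)$ — the equivalence of $UU^*\le C^{-1}$ with $\sum_i c_iU_i^*U_i\le I_n$, together with the sign constraint $c_i\ge 0$ and the $c_i=0$ convention — and observing that the $U_i^*U_i$ are orthogonal projections; after that, each of $(i)$–$(iv)$ is a one‑ or two‑line consequence of the fact that adding (or deleting) a positive semidefinite summand can only increase (or decrease) a sum in the Loewner order.
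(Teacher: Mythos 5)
Your proof is correct and follows essentially the same route as the paper's: both arguments reduce every item to Loewner-order facts about the orthogonal projections $U_i^*U_i$. The only difference is cosmetic --- the paper manipulates the explicit decomposition $\sum_i c_i\,U_i^*U_i + \sum_j b_j\,B_j^*B_j = I_n$ from \eqref{sec4:eq1} (conjugating it by $\Phi$ in $(i)$, absorbing summands in $(ii)$, averaging two decompositions in $(iii)$, splitting the coefficients in $(iv)$), whereas you work directly with the equivalent inequality $\sum_i c_i\,U_i^*U_i \le I_n$, which makes the bookkeeping in $(i)$--$(iv)$ lighter without changing the substance.
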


\begin{proof} $(i)$ From Lemma \ref{lemma.Umatrix}, we have that $V^*=\Phi U^*$ 
  for some $\Phi\in O(n)$. Let $(c_1,\ldots,c_m)\in\mathcal{C}(U)$. By the definition 
  of $\mathcal{C}(U)$, this means that for some
  $$
  {\rm B}=\left(\begin{array}{c}
   \left[\leftarrow \mathcal{B}_1 \rightarrow\right]  \\
    \vdots                               \\
   \left[\leftarrow\mathcal{B}_\nu\rightarrow\right]    \\
  \end{array}\right)
  $$
  and
  $
  L={\rm diag}(b_1I_{k_1},\ldots,b_\nu I_{k_\nu}),
  $
  we have that
  $
  U^*C\,U + {\rm B}^*L\,{\rm B} = I_n.
  $
  Taking $\Phi$ and $\Phi^*$, we write equivalently
  $$
  \Phi U^*C\,U \Phi^* + \Phi{\rm B}^*L\,{\rm B}\Phi^* = \Phi I_n \Phi^*
  $$
  or
  $$
  V^*C\,V + {\rm B}_\Phi^*L\,{\rm B}_\Phi = I_n,
  $$
  where
  $$
  {\rm B}_\Phi:={\rm B}\Phi^*=\left(\begin{array}{c}
  \left[\leftarrow \mathcal{B}_1\Phi^* \rightarrow\right]  \\
     \vdots                               \\
  \left[\leftarrow\mathcal{B}_\nu\Phi^*\rightarrow\right]    \\
  \end{array}\right).
  $$
  This gives that $(c_1,\ldots,c_N)\in\mathcal{C}(V)$. Thus, we have proved that
  $\mathcal{C}(U)\subseteq\mathcal{C}(V)$. The same argument gives also the other 
  inclusion and the claim follows.

  \smallskip

  As for $(ii)$, let $x=(x_i)_{i\in\sigma}\in P_{\sigma} \mathcal{C}(U)$. This means
 that for some $c=(c_1,\ldots,c_m)\in\mathcal{C}(U)$, we have that
 $x_i=c_i$ for all $i \in\sigma$. Recall from the definition of $\mathcal{C}(U)$ that
 the equation \eqref{sec4:eq1} holds and it can be rewritten as
 $$
 \sum_{i\in\sigma} x_i\,\mathcal{U}_i^*\mathcal{U}_i +
 \sum_{i\notin\sigma} c_i\,\mathcal{U}_i^*\mathcal{U}_i +
 \sum_{j=1}^{\nu} b_j\,\mathcal{B}_j^*\mathcal{B}_j = I_n.
 $$
 Note that $\Xi_\sigma U$ has as block rows, the matrices $\mathcal{U}_i$
 for $i\in\sigma$. The last equation guarantees that $x\in\mathcal{C}(\Xi_\sigma U)$.

 \smallskip

 For $(iii)$, assume that $(c_{1},\ldots,c_{m})$, $(\hat{c}_{1},\ldots,\hat{c}_{m})
 \in C(U)$ and $\lambda\in [0,1]$. Then there exist $b_{j}$'s, $\hat{b}_{j}$'s,
 $\mathcal{B}_j$'s and $\hat{\mathcal{B}}_j$'s such that
 $$
 \sum_{i=1}^m {c}_i\,\mathcal{U}_i^*\mathcal{U}_i +
 \sum_{j=1}^{\nu_{1}} b_j\,\mathcal{B}_j^*\mathcal{B}_j=I_n\,\,{\rm and}\,\,
 \sum_{i=1}^m \hat{c}_{i}\mathcal{U}_i^*\mathcal{U}_i + \sum_{j=1}
 ^{\nu_{2}} \hat{b}_j\,\hat{\mathcal{B}}_{j}^*\hat{\mathcal{B}}_j=I_n.
 $$
 Consequently,
 \begin{eqnarray*}
   &\sum_{i=1}^m (\lambda c_i+ (1-\lambda)\hat{c}_{i})\,
   U_i^* U_i + \sum_{j=1}^{\nu_{1}}\lambda b_j\,
   B_j^* B_j + \sum_{j=1}^{\nu_{2}} (1-\lambda)
    \hat{b}_{j}^{\prime}\hat{B}_j^*\hat{B}_j\\
   &=
   \lambda\biggl(\sum_{i=1}^m c_i U_i^* U_i
   + \sum_{j=1}^{\nu_{1}} b_j B_j^* B_j\biggl)
   +
   (1-\lambda)\biggl(\sum_{i=1}^m \hat{c}_i\, U_i^* U_i
   + \sum_{j=1}^{\nu_{2}} \hat{b}_j\hat{B}_j^*\hat{B}_j\biggr) \\
   &=
   \lambda I_n + (1-\lambda)I_n\\
   & = I_n,
 \end{eqnarray*}
 which means that $\lambda c_i+ (1-\lambda)\hat{c}_{i}\in \mathcal{C}(U)$ and
 this shows the convexity of $\mathcal{C}(U)$. For the second part of the assertion
 $(iii)$, since $\mathcal{U}_i^*\mathcal{U}_i$ is a projection from $\IR^n$ to 
 $\IR^{n_i}$ for $1\leq i\leq m$, we have that
 \begin{equation}\label{eq.B_1^m}
 \norm{U^*CU}_{OP}=\biggl\|\sum_{i=1}^m c_i\,\mathcal{U}_i^*\mathcal{U}_i\biggr\|_{OP}
 \leq \sum_{i=1}^m c_i \,\norm{\mathcal{U}_i^*\mathcal{U}_i}_{OP}\leq \sum_{i=1}^m c_i
 \end{equation}
 for all $N\times N$ diagonal matrix $C={\rm diag}(c_1I_{n_1},\ldots,c_mI_{m_m}).$
 From  \eqref{arg}, we have that
 $c=(c_1,\ldots,c_m)\in\mathcal{C}(U)\Leftrightarrow\norm{U^*CU}_{OP}\leq 1$,
 and so, by \eqref{eq.B_1^m}
 $$
 \biggl\{ x\in \mathbb [0,\infty)^{m}:
 \sum_{i=1}^{m} x_{i} \leq 1\biggr\}\subseteq \mathcal{C}(U).
 $$
 On the other hand, from \eqref{arg} again, we have that
 $$
 (c_1,\ldots,c_m)\in\mathcal{C}(U)\Leftrightarrow UU^*-C^{-1}\leq 0
 \Leftrightarrow\left<(UU^*-C^{-1}),x,x\right>\leq 0,\,\,\forall x\in\mathbb{R}^N.
 $$
 Taking the vectors ${\bf x}_i=(0,\ldots,0,x_i,0,\ldots,0)\in\IR^N$, $1\leq i\leq m$, 
 for any non-zero $x_i\in\IR^{n_i}$, we get that $c_i\leq 1$ 
 and this shows that $\mathcal{C}(U)\subset [0,1]^m$.
 
 \smallskip
 
 Finally, $(iv)$ can be be easily verified by rewriting the equation \eqref{sec4:eq1} as
\[
\sum_{i=1}^m \lambda_{i} c_i U_i^* U_i + \sum_{i=1}^m (1-\lambda_{i})c_i
U_i^* U_i + \sum_{j=1}^\nu b_j B_j^*  B_j = I_n.
\]
 \end{proof}

 We are now ready to discuss the geometry of the eligible exponents in Theorem 
 \ref{main}(i). Note that we consider only its normalized version, i.e. we assume 
 that $T_{ii}=I_{n_i}$ for every $1\leq i\leq m$. Nevertheless, as one could see by
 a simple change of variables, this simpler version of Theorem \ref{main}, is just 
 an equivalent reformulation of the initial general statement.
 
 \smallskip
 
 Let $\mathbf{X}$ be the Gaussian random vector in $\IR^N$, with covariance matrix 
 $T=(T_{ij})_{i,j\leq m}$, as in Theorem \ref{main}, with $T_{ii}=I_{n_i}$. 
 We define $\mathcal{C}(\mathbf{X})$ in $\IR^m$ to be the 
 set of all vectors $(1/p_1,\ldots,1/p_m)\in \mathbb [0,\infty)^m$ such that
 \begin{equation}\label{eq.C(X)}
 \IE \prod_{j=1}^m f_j(X_j)\leq \prod_{i=1}^m  \|f_i\|_{L^{p_i}(\gamma_{n_i})},
 \quad\forall\;f_i,
 \;\;1\leq i\leq m.
 \end{equation}
 We also define $\mathcal{B}(\mathbf{X})$ as the set of all vectors 
 $(1/p_1,\ldots,1/p_m)\in \mathcal{C}(\mathbf{X})\cap (0,1)^m$, with the following 
 property: For every $1\leq i\leq m$, if $q>p_i$, then there exist $f_1,\ldots,f_m$ 
 measurable functions, such that
 $$
 \IE\prod_{i=1}^m f_i(X_i) >
 \prod_{j\neq i} 
 \left(\IE f_j(X_j)^{p_j}\right)^{1/p_j}\; \left(\IE f_i(X_i)^q\right)^{1/q}.
 $$
 If $(1/{p_{1}}, \ldots, 1/{p_{m}}) \in \mathcal{C}(\mathbf{X})$, we say that
 $(p_{1},\ldots, p_{m})$ are \textit{eligible exponents} in Theorem \ref{main}(i).
 Respectively, if $(1/{p_{1}}, \ldots, 1/{p_{m}}) \in \mathcal{B}(\mathbf{X})$, we 
 say that $(p_{1},\ldots, p_{m})$ is  a choice of \textit{best possible exponents} 
 in Theorem \ref{main}(i).
 By Lemma \ref{lemma.Umatrix}, there exists a matrix $U$ such that $T=UU^*$ and
 we set $\mathcal{C}(T)=\mathcal{C}(U)$. Observe that $\mathcal{C}(T)$ is 
 well-defined by Proposition \ref{lemma.C(T)}(i). Finally, by Remark \ref{rmrk2} 
 we have that $\mathcal{C}(\mathbf{X})=\mathcal{C}(U)=\mathcal{C}(T)$. Moreover, 
 we know that
 $\mathcal{C}(\mathbf{X})$ is a convex set of $[0,\infty)^m$ that satisfies
 $$
 \biggl\{ y \in [0,\infty)^m:\sum_{i=1}^m y_i 
 \leq 1\biggr\}\subseteq\mathcal{C}(\mathbf{X})\subseteq(0,1]^m.
 $$
 Since $\mathcal{C}(\mathbf{X})$ has the property \eqref{eq.kouti}, it can be 
 extended to an $1$-unconditional convex body $\tilde{\mathcal{C}}(\mathbf{X})$  
 in $\IR^m$ in the obvious way: 
 $(c_{1}, \ldots, c_{m}) \in \tilde{\mathcal{C}}(\mathbf{X})$ if and only if
 $ (|c_{1}|, \ldots, |c_{m}|) \in \mathcal{C}(\mathbf{X})$. In this case we have 
 that
 \begin{equation}
  B_1^m \subseteq \tilde{\mathcal{C}}(\mathbf{X}) \subseteq B_\infty^m,
 \end{equation}
 where
 \begin{align*}
 B_1^m=\{x\in\IR^m:\sum_{i\leq m} |x_i| \leq 1\}
 \quad{\rm and}\quad
 B_{\infty}^m=\{x\in\IR^m:\max_{i\leq m}|x_i| \leq 1\}.
 \end{align*}
The associated norm in $\IR^m$, is given by
 $$
 \norm{c}_{\tilde{\mathcal{C}}(X)}=\norm{U\,|{\rm C}|\,U^*}_{op}
 $$
 for every $c=(c_1,\ldots,c_m)\in\IR^m$, where
 $|{\rm C}|:={\rm diag}\big(|c_1|I_{n_1},\ldots,|c_m|I_{n_m}\big)$.
 Moreover, one can show that if 
 $(1/p_1,\ldots,1/p_m)\in\partial\,\mathcal{C}(\mathbf{X})\cap (0,1)^m$, 
 then there exists an ${\rm a}=({\rm a}_1,\ldots,{\rm a}_m)$ in $\IR^N$
 where ${\rm a}_i 
 \in\IR^{n_i}$,  and
 $f_i:\IR^{n_1}\rightarrow\IR$ of the form $f_i(x_i)=\exp(\inner{{\rm a}_i}{x_i})$,
 $x_i\in\mathbb{R}^{n_i}$, such that equality holds in \eqref{main1}.
 Indeed, by Remark \ref{rmk3}, it is enough to show that
 $\inner{(P-T){\rm a}}{\rm a}=0$. First note that
 \begin{eqnarray*}
  (c_1,\ldots,c_m)\in\partial\,\mathcal{C}(\mathbf{X}) \Leftrightarrow
  \norm{\sqrt{C}\,T\sqrt{C}}_{op}=1 \Leftrightarrow \lambda_1(\sqrt{C}\,T\sqrt{C})=1 .
 \end{eqnarray*}
 Let ${\rm v}_1\in \IR^N$ be the normal eigenvector of $\lambda_1=\lambda_1
 (\sqrt{C}\,T\sqrt{C})$. Then, for ${\rm a}=\sqrt{C}\,{\rm v}_1\in\IR^N$ we have that
 $$
 \inner{T{\rm a}}{\rm a}=\inner{\sqrt{C}\,T\sqrt{C}\,{\rm v}_1}{{\rm v}_1}=
 \lambda_1\inner{{\rm v}_1}{{\rm v}_1}=1=\inner{C^{-1}{\rm a}}{\rm a}.
 $$
 Finally, note that in particular we have shown that 
 $\partial \mathcal{C}(\mathbf{X}) \cap (0,1)^{m}
 \subseteq \mathcal{B}(\mathbf{X})$. Also $\mathcal{B}(\mathbf{X}) \subseteq \partial
 \mathcal{C}(\mathbf{X}) \cap (0,1)^{m}$ by H\"older's inequality, and thus
 we have proved the following
 \begin{proposition}
  Let $\mathbf{X}$ be the Gaussian random vector in $\IR^N$, with covariance matrix 
  $T=(T_{ij})_{i,j\leq m} \neq I_N$ , as in Theorem \ref{main}, with $T_{ii}=I_{n_i}$,
  and $\mathcal{B}(\mathbf{X})$
  as defined above. Then for every $c=(c_1,\ldots,c_m)\in\IR^m$ and
  $C={\rm diag}(c_1I_{n_1},\ldots,c_mI_{n_m})$, we have that
  $$
  c\in\mathcal{B}(\mathbf{X}) \Leftrightarrow c\in\partial\mathcal{C}(\mathbf{X})\cap(0,1)^m
  \Leftrightarrow \norm{\sqrt{C}T\sqrt{C}}_{op}=1.
  $$
  Moreover, for every $c=(c_1,\ldots,c_m)\in\mathcal{B}(\mathbf{X})$, there exist functions
  $f_i(x_i)=\exp(\inner{\alpha_i}{x_i})$, $x_i\in\mathbb{R}^{n_i}$ such that
  one has equality in \eqref{main1} for $(p_1,\ldots,p_m)=(1/c_1,\ldots,1/c_m)$.
 \end{proposition}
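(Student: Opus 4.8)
The plan is to synthesize the facts already developed in this subsection into a short chain of equivalences, with the quantity $\|\sqrt{C}\,T\sqrt{C}\|_{op}$ as the hinge; here and below $C={\rm diag}(c_1I_{n_1},\ldots,c_mI_{n_m})$, and $T=UU^*$ with block rows $U_i$ satisfying $U_iU_i^*=I_{n_i}$, as provided by Lemma \ref{lemma.Umatrix}. First I would record that, by Remark \ref{rmrk2} together with the exponential test functions of \eqref{rmrkMulti:eq1}, a vector $(1/p_1,\ldots,1/p_m)\in[0,\infty)^m$ lies in $\mathcal{C}(\mathbf{X})$ if and only if $T\le P$; hence $\mathcal{C}(\mathbf{X})=\mathcal{C}(T)=\mathcal{C}(U)$, the latter being well defined by Proposition \ref{lemma.C(T)}(i). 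Running this through the chain \eqref{arg} together with the identity $\|U^*CU\|_{op}=\|\sqrt{C}\,UU^*\sqrt{C}\|_{op}=\|\sqrt{C}\,T\sqrt{C}\|_{op}$ shows that $c\in\mathcal{C}(\mathbf{X})$ if and only if $\|\sqrt{C}\,T\sqrt{C}\|_{op}\le 1$. Because $\|\sqrt{C}\,T\sqrt{C}\|_{op}=\big\|\sum_{i}c_i U_i^*U_i\big\|_{op}$ is continuous in $c$ and nondecreasing in each coordinate (a norm of a positive semidefinite matrix that increases in the L\"owner order), the set $\{c\in(0,1)^m:\|\sqrt{C}\,T\sqrt{C}\|_{op}=1\}$ coincides with $\partial\mathcal{C}(\mathbf{X})\cap(0,1)^m$, which is the second equivalence claimed.

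Next I would establish $\mathcal{B}(\mathbf{X})=\partial\mathcal{C}(\mathbf{X})\cap(0,1)^m$. If $c\in(0,1)^m$ were interior to $\mathcal{C}(\mathbf{X})$, then for each $i$ we would have $c+\varepsilon e_i\in\mathcal{C}(\mathbf{X})$ for small $\varepsilon>0$, so by the first step \eqref{main1} (equivalently \eqref{eq.C(X)}) would hold for all admissible $f$'s with $p_i$ replaced by $q:=(c_i+\varepsilon)^{-1}<p_i$, contradicting the defining property of $\mathcal{B}(\mathbf{X})$; thus $\mathcal{B}(\mathbf{X})\subseteq\partial\mathcal{C}(\mathbf{X})\cap(0,1)^m$. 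For the reverse inclusion and the final ``moreover'' clause simultaneously, let $c\in\partial\mathcal{C}(\mathbf{X})\cap(0,1)^m$, so $\lambda_1(\sqrt{C}\,T\sqrt{C})=1$ with a unit eigenvector $v_1$, and set $\alpha:=\sqrt{C}\,v_1=(\alpha_1,\ldots,\alpha_m)$. Then $\langle T\alpha,\alpha\rangle=\langle\sqrt{C}\,T\sqrt{C}\,v_1,v_1\rangle=1=\langle v_1,v_1\rangle=\langle C^{-1}\alpha,\alpha\rangle$, i.e. $\alpha\in\mathrm{Ker}(T-C^{-1})=\mathrm{Ker}(T-P)$, so by Remark \ref{rmk3} the functions $f_i(x_i)=e^{\langle\alpha_i,x_i\rangle}$ give equality in \eqref{main1}; this is exactly the ``moreover'' statement. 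To see $c\in\mathcal{B}(\mathbf{X})$, fix $i$ and $q<p_i$: by the Gaussian moment formula \eqref{rmrkMulti:eq1}, replacing $p_i$ by $q$ multiplies the right-hand side of \eqref{main1} by $e^{(q-p_i)|\alpha_i|^2/2}<1$ provided $\alpha_i\neq 0$, so these $f_i$'s exhibit the strict inequality required.

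The main obstacle is the case $\alpha_i=0$ in the last step --- equivalently, the case in which every top eigenvector of $\sqrt{C}\,T\sqrt{C}$ vanishes on the $i$-th block, so increasing $c_i$ alone does not immediately push $\|\sqrt{C}\,T\sqrt{C}\|_{op}$ above $1$ --- since then the exponential functions above do not see the $i$-th direction. I would handle it either by choosing, when the $\lambda_1$-eigenspace is large enough, an eigenvector with nonzero $i$-th block and repeating the computation, or by invoking the marginal inclusion Proposition \ref{lemma.C(T)}(ii) to pass to the sub-family indexed by $\{1,\ldots,m\}\setminus\{i\}$ and then pasting in a one-variable example on the $i$-th factor to violate \eqref{eq.C(X)} at the perturbed exponent $q$. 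Everything else amounts to bookkeeping around \eqref{arg}, Remark \ref{rmk3}, and the properties of $\mathcal{C}(\mathbf{X})$ gathered in Proposition \ref{lemma.C(T)}.
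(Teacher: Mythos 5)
Your chain of equivalences $c\in\partial\mathcal{C}(\mathbf{X})\cap(0,1)^m\Leftrightarrow\norm{\sqrt{C}T\sqrt{C}}_{op}=1$ via \eqref{arg}, the ``moreover'' clause (taking $\alpha=\sqrt{C}v_1$ with $v_1$ the unit top eigenvector so that $\inner{(P-T)\alpha}{\alpha}=0$ and hence $\alpha\in\mbox{Ker}(P-T)$ since $P-T\geq 0$), and the inclusion $\mathcal{B}(\mathbf{X})\subseteq\partial\mathcal{C}(\mathbf{X})\cap(0,1)^m$ all follow the paper's own route and are fine. You have also correctly put your finger on the gap in the inclusion $\partial\mathcal{C}(\mathbf{X})\cap(0,1)^m\subseteq\mathcal{B}(\mathbf{X})$: when the $i$-th block $\alpha_i$ of the constructed eigenvector vanishes, the exponential test functions give only equality after perturbing $p_i$, not the strict violation required by the definition of $\mathcal{B}(\mathbf{X})$ for that index $i$. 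The paper's own argument glosses over precisely this point, concluding the inclusion from the existence of an equality case alone.

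However, neither of your two proposed repairs can close the gap, because the gap is genuine. Reading the definition of $\mathcal{B}(\mathbf{X})$ with the quantifier ``for every $i$'' (and the evident sign correction $q<p_i$), the inclusion $\partial\mathcal{C}(\mathbf{X})\cap(0,1)^m\subseteq\mathcal{B}(\mathbf{X})$ fails once $m\geq 3$. Take $n=N=3$, $m=3$, $n_1=n_2=n_3=1$, with $U_1=(1,0,0)$, $U_2=(1/\sqrt{2},1/\sqrt{2},0)$, $U_3=(0,0,1)$; then $T=UU^*$ is positive definite, $T\neq I_3$, and $T_{ii}=1$. One computes that $U^*CU$ is block diagonal with a $2\times2$ block in the variables $c_1,c_2$ and a $1\times1$ block equal to $c_3$, so $\norm{\sqrt{C}T\sqrt{C}}_{op}=\max\{\lambda_+(c_1,c_2),\,c_3\}$. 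Choosing $c_1=c_2=2-\sqrt{2}$ forces $\lambda_+=1$, and any $c_3\in(0,1)$ then gives $c\in\partial\mathcal{C}(\mathbf{X})\cap(0,1)^3$; yet $c+\varepsilon e_3\in\mathcal{C}(\mathbf{X})$ for all small $\varepsilon>0$, so by Theorem \ref{main}(i) the inequality still holds at the perturbed exponent and no counterexample $f_i$'s exist for $i=3$, i.e.\ $c\notin\mathcal{B}(\mathbf{X})$. Here the top eigenspace of $\sqrt{C}T\sqrt{C}$ is one-dimensional and its eigenvector has a vanishing third block, so your first repair (picking another top eigenvector) is unavailable, and passing to the sub-family $\{1,2\}$ via Proposition \ref{lemma.C(T)}(ii) says nothing about the unconstrained direction $i=3$, so the second repair fails as well. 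The statement would go through if the quantifier were weakened to ``for \emph{some} $i$'': a supporting hyperplane of the convex down-set $\mathcal{C}(\mathbf{X})$ at $c$ has a nonnegative coefficient vector with some $\ell_{i_0}>0$, forcing $c+\varepsilon e_{i_0}\notin\mathcal{C}(\mathbf{X})$ for all $\varepsilon>0$, and Remark \ref{rmrk2} then supplies the required counterexample functions for that $i_0$ -- an argument that does not rely on the equality-case eigenvector at all.
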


 \medskip

 Let us close the discussion, considering again, the simplest case where $m=2$ and
 $n_1=n_2=1$, i.e. $\mathbf{X}=(X_1,X_2)$, where $X_1$ and $X_2$ are two standard Gaussian
 random variables with $\IE X_1X_2=t\in[0,1]$. A direct computation shows that the
 set of all eligible exponents is
 $$
 \mathcal{C}(\mathbf{X})=
 \Big\{(x,y)\in[0,1]^2:\,\Big(\frac1x-1\Big)\Big(\frac1y-1\Big)\geq t^2\Big\}
 $$
and
 $$
 \norm{(x,y)}_{\tilde{\mathcal{C}}(X)}=
 \frac{\sqrt{(|x|+|y|)^2-4(1-t^2)\,|xy|\,}\,+\,|x|+|y|}{2} .
 $$
 Moreover, the couple of exponents $(p_1,p_2)$ with $p_1,p_2\geq 1$ is best possible
 in \eqref{main1} if and only if $(1/p_1,1/p_2)$ lies on
 $\mathcal{B}(\mathbf{X})=\partial\,\mathcal{C}(\mathbf{X})\cap (0,1)^2$ or equivalently, 
 if and only if
 \begin{eqnarray}
 \label{last-eq}
  (p_1-1)(p_2-1) = t^2p_1p_2.
 \end{eqnarray}
 
 
\subsection{Gaussian hypercontractivity inequalities}\label{sec:hyper} 

Recall the Ornstein-Uhlenbeck semi-group operators $(P_t)_{t\geq 0}$ form 
\eqref{ou}. The Gaussian hypercontractivity, discovered by Nelson \cite{Nel2}, 
states that if $p,q>1$ and $t\geq 0$ satisfy $(p-1)(q-1)^{-1}\geq e^{-2t},$ 
then 
\begin{equation}\label{eq.Hyper}
 \norm{{P}_tf}_{L_q(\gamma_n)} \leq \norm{f}_{L_p(\gamma_n)}
\end{equation}
for any measurable $f:\mathbb{R}^n\rightarrow\mathbb{R}$. See also 
\cite{Bec,Bon,BL,Gross} for various approaches to this inequality.
Later, Borell \cite{Bor} proved a reverse hypercontractivity inequality 
for the Bernoulli probability measure. His result was recently extended 
by Mossel, Oleszkiewicz and Sen in \cite{MOS} to a more general class of 
probability measures satisfying log-Sobolev inequalities of certain type. 
In the special case of the Gaussian measure, their result states that if 
$p,q<1$ and $t\geq 0$ with $(1-p)(1-q)^{-1}\geq e^{-2t}$, then
\begin{equation}\label{eq.Rev-Hyper}
\norm{{P}_tf}_{L_q(\gamma_n)} \geq \norm{f}_{L_p(\gamma_n)}
\end{equation}
for any measurable $f$ on $\mathbb{R}^n$. 

\smallskip

In this subsection we show that Theorem \ref{main} generalizes those two 
results. 
To recover \eqref{eq.Hyper} and \eqref{eq.Rev-Hyper} from Theorem \ref{main}, 
consider two $n$-dimensional standard Gaussian random vectors $X$ and $Y$ such 
that their joint law has the $2n\times 2n$ covariance matrix
\begin{equation}\label{eq.T-Hyper}
 T=\left(\begin{array}{cc}
I_n & e^{-t}I_n\\
e^{-t}I_n & I_n
\end{array}\right),\,\,t\geq 0.
\end{equation}
For arbitrary measurable functions $f,g:\mathbb{R}^n\rightarrow\mathbb{R},$
\begin{equation}\label{eq.Gaussian-Perturbation}
 \IE g(X) f(Y)=\IE g(X){P}_tf(X).
\end{equation}
Indeed, note that since Gaussian random vector is characterized by its mean 
and  covariance, $(X,Y)$ has the same joint distribution as 
$(X,e^{-t}X+\sqrt{1-e^{-2t}}Z)$, where $Z$ is an independent copy of $X$. 
A standard computation using conditional expectation yields
\begin{align*}
 \IE g(X) f(Y)&= 
 \IE\biggl(\IE\left.\Bigl(g(X)f(e^{-t}X+\sqrt{1-e^{-2t}}Z)\right|X\Bigr)\biggr)
 = \IE g(X){P}_tf(X).
\end{align*}
For a real number $r\neq 1,$ let $r'$ be the H\"{o}lder conjugate exponent of $r$. 
Let $p,q\in \IR$ and $q\neq 1$, consider the $2n\times 2n$ diagonal matrix,
\begin{equation*}\label{eq.Hyper-diagonal}
 P_o=\left(\begin{array}{cc}
q'I_n      & \mathbb{O}\\
\mathbb{O} & pI_n
\end{array}\right).
\end{equation*}
A direct computation shows that, for any $p,q>1$, 
\begin{equation}\label{eq.UpEquivalence}
 T\leq P_o \Leftrightarrow \frac{p-1}{q-1}\geq e^{-2t}
\end{equation}
and for any $p,q<1$, 
\begin{equation}\label{eq.DownEquivalence}
 T\geq P_o\Leftrightarrow \frac{1-p}{1-q}\geq e^{-2t}.
\end{equation}
Recall the duality relations
\begin{equation}\label{eq.Up-Dual}
\norm{f}_{L_r(\gamma_n)}=\sup_{\norm{g}_{L_{r'}(\gamma_n)}\leq 1}\IE f(X)g(X),\,\,r>1
\end{equation}
and for $f$ nonnegative,
\begin{equation}\label{eq.Down-Dual}
\norm{f}_{L_r(\gamma_n)}=\inf_{g>0,\norm{g}_{L_{r'}(\gamma_n)}\geq 1}\IE f(X)g(X),\,\,r<1.
\end{equation}
Suppose that $f$ is a measurable function on $\mathbb{R}^n.$ If $p,q>1$
with $(p-1)/(q-1)\geq e^{-2t},$ then \eqref{eq.UpEquivalence} implies $T\leq P_o$
and thus from \eqref{eq.Up-Dual} and Theorem \ref{main} (i),
\begin{align*}
 \norm{{ P}_tf}_{L_q(\gamma_n)}
 &= \sup_{\norm{g}_{L_{q'}(\gamma_n)}\leq 1} \IE g(X){P}_tf(X)\\
 &= \sup_{\norm{g}_{L_{q'}(\gamma_n)}\leq 1} \IE g(X)f(Y) \\
 &\leq \sup_{\norm{g}_{L_{q'}(\gamma_n)}\leq 1}
        \norm{g}_{L_{q'}(\gamma_n)}\,\norm{f}_{L_{p}(\gamma_n)} \\
 &= \norm{f}_{L_{p}(\gamma_n)},
\end{align*}
which gives \eqref{eq.Hyper}. Proceeding similarly by using \eqref{eq.DownEquivalence},
\eqref{eq.Down-Dual} and Theorem \ref{main} (ii) yields the reverse inequality
\eqref{eq.Rev-Hyper}

\smallskip

Conversely, one may retrieve the special case of Theorem \ref{main}, for $m=2$, 
$n_1=n_2=n$ and the $2n\times 2n$ covariance matrix $T$ is as in \eqref{eq.T-Hyper}, 
using the hypercontractivity and reverse hypercontractivity inequalities 
\eqref{eq.Hyper} and \eqref{eq.Rev-Hyper}. 

Indeed, suppose for example, that $T\geq P={\rm diag}(qI_n,pI_n)$. Thus $q,p<1$ and 
applying \eqref{eq.DownEquivalence} to $T\geq P$, one sees that $(1-p)/(1-q') \geq e^{-2t}$. 
This allows us to use the pair $p,q'$ in \eqref{eq.Rev-Hyper} and combining this with 
reverse H\"{o}lder's inequality yields
\begin{align*}
 \IE g(X) f(Y) &= \IE g(X){P}_tf(X)
 \geq \|g\|_{L_q(\gamma_n)}\|{P}_tf\|_{L_{q'}(\gamma_n)}
 \geq \|g\|_{L_q(\gamma_n)}\|f\|_{L_{p}(\gamma_n)}.
\end{align*}
This gives Theorem \ref{main} (i). A similar argument, using \eqref{eq.Up-Dual}
instead of \eqref{eq.Down-Dual} also shows $(ii)$ of Theorem \ref{main}.

\smallskip

We note here that the connection between Theorem \ref{main}(i) and the Gaussian 
hypercontractivity is rather classical. We refer to \cite{BL} and to Theorem 13.8.1
in \cite{Garl}.

\section{Theorem 2: A Generalization of the 
         sharp Young  and reverse Young inequalities.} \label{sec5}

\subsection{The sharp Young and reverse Young inequalities}
The sharp Young and reverse Young inequalities states that for nonnegative 
measurable functions $f_1$ and $f_2$ on $\mathbb{R}^n,$ if $p,q,r>0$ satisfy 
$p^{-1}+ q^{-1}= 1 + r^{-1}$, then we have respectively,
\begin{align}\label{sy}
\|f_1*f_2\|_r\leq C^n\|f_1\|_{p}\|f_2\|_{q}\,\,\mbox{for $p,q,r\geq 1$}
\end{align}
and
\begin{align}\label{rsy}
\|f_1*f_2\|_r\geq C^n\|f_1\|_{p}\|f_2\|_{q}\,\,\mbox{for $p,q,r\leq 1$},
\end{align}
where, $C:=C_pC_q/C_r$, where $C_u^2=|u|^{1/u}/|u'|^{1/u'}$ for $1/u+1/u'=1$.

\smallskip

The sharp Young inequality \eqref{sy} was proved by Beckner in \cite{Bec} and shortly 
after, by Brascamp and Lieb in \cite{BL}. In the late paper Brascamp and Lieb proved a
generalization of \eqref{sy}, the so-called Brascamp-Lieb inequality. In addition they
introduced, the reverse inequality \eqref{rsy}. In this section, we prove 
Theorem \ref{Gen-Rev-Young} and we show how it generalizes both sharp Young and reverse 
sharp Young inequalities. 
These fundamental inequalities have many applications in analysis. As it was noticed by 
Brascamp and Lieb in \cite{BL}, from the sharp reverse Young inequality one can retrieve 
the Pr\'{e}kopa-Leindler inequality \cite{Lei,Pre}. On the other hand, Lieb in \cite{Lieb} 
showed that sharp Young inequality implies Shannon's entropy power inequality. Furthermore, 
an argument that the sharp reverse Young inequality interpolates between the Shannon entropy 
power inequality $(r\rightarrow 1-)$ and the Brunn-Minkowski inequality $(r\rightarrow 0+)$, 
is presented in Chapter 17 of the book \cite{CT}.

\smallskip

 Let us first explain how \eqref{sy} and \eqref{rsy} can be recovered, directlly from our 
theorem. Set the matrices
\begin{align*}
 U&=
 \left(\begin{array}{cc}
 I_n & -I_n \\
 \IO &  I_n \\
 I_n &  \IO
 \end{array}\right),\\
\\ 
 B_1&=\left(\begin{array}{cc}
c_{3}(1-c_{3})I_n & (1-c_{2})(1-c_{3})I_n\\
(1-c_{2})(1-c_{3})I_n & c_{2}(1-c_{2})I_n
\end{array}\right),\\
\\
B_2&=\left(\begin{array}{cc}
{{c}}_{3}(1+{{c}}_{3})I_n & (c_{2}-1)(1+c_{3})I_n\\
(c_2-1)(1+c_{3})I_n & c_{2}(c_{2}-1)I_n
\end{array}\right),
\end{align*}
where $c_{1}= p^{-1}, c_{2}= q^{-1}$ and $c_{3}= |r^{\prime}|^{-1}$, and let
\[
P:= {\rm diag}( pI_n, qI_n, r'I_n).
\]
One can check that if $p,q,r\geq 1$, then
$ U  {B_1} U^{\ast} \leq P D_{U {B_1} U^{\ast}}$
and if $0<p,q,r\leq 1$,
$ U B_2U^{\ast} \geq P D_{UB_2U^{\ast}}.
$
In either case, a direct computation gives
\[
\left( \frac{{\rm det} B_i}{({\rm det }U_{1}B_iU_{1}^{\ast})^{\frac{1}{p}} 
({\rm det }U_{2}B_iU_{2}^{\ast})^{\frac{1}{q}} 
({\rm det }U_{3} 
B_i U_{3}^{\ast})^{\frac{1}{r^{\prime}}}}\right)^{\frac{1}{2}} = C^{n} .
\]
Then, for any nonnegative measurable functions $f_1,f_2,g$ on $\mathbb{R}^n$, we
apply Theorem \ref{Gen-Rev-Young}(i) with $U,B_1,P$ and Theorem \ref{Gen-Rev-Young}(ii) 
with $U,B_2,P,$. Finally, the duality relations \eqref{eq.Up-Dual} and \eqref{eq.Down-Dual} 
lead to \eqref{sy} and \eqref{rsy}, respectively. The proceeding argument can be found in
\cite{Le}, see also \cite{BCLM}.

\subsection{Proof of Theorem \ref{Gen-Rev-Young}}
Before we give the proof of Theorem \ref{Gen-Rev-Young}, let us comment on 
its assumptions. Note first that the additional assumption \eqref{eq.homogen} that appears 
in Theorem \ref{Gen-Rev-Young} is actually a necessary condition due to the homogeneity 
of the Lebesgue measure. Moreover, in the following lemma, we shall see that under this 
homogeneity condition, the assumption \eqref{eq.A-R-Y-Up} is equivalent with 
\eqref{eq.App3}.

\begin{lemma}\label{prop1}
In the setting of Theorem \ref{Gen-Rev-Young}, the following are equivalent
 \begin{equation} \label{eq.App1}
\sum_{i=1}^n\frac{n_i}{p_i}=n\;\;\mbox{and}\;\;UBU^* \leq P D_{UBU^*},
 \end{equation}
 \begin{equation}\label{eq.App2}
  B^{-1} = U^{\ast} (P D_{UBU^*})^{-1} U.
 \end{equation}
 \begin{equation} \label{eq.App3}
  B^{-1} = \sum_{i=1}^{m} \frac{1}{p_{i}} U_i^{\ast}
  \left(U_i B U_i^{\ast}\right)^{-1} U_i
 \end{equation}
\end{lemma}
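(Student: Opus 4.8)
The plan is to prove the cycle \eqref{eq.App3} $\Leftrightarrow$ \eqref{eq.App2} $\Rightarrow$ \eqref{eq.App1} $\Rightarrow$ \eqref{eq.App2}, treating the homogeneity condition \eqref{eq.homogen} as the bookkeeping device that makes the trace arguments close. First I would observe that the equivalence \eqref{eq.App2} $\Leftrightarrow$ \eqref{eq.App3} is purely formal: writing $M := PD_{UBU^*} = \mathrm{diag}(p_1 U_1BU_1^*,\ldots,p_m U_mBU_m^*)$, this is a block diagonal matrix whose $i$-th block is $p_i U_iBU_i^*$, so $M^{-1}$ is block diagonal with blocks $p_i^{-1}(U_iBU_i^*)^{-1}$, and then $U^*M^{-1}U = \sum_{i=1}^m p_i^{-1} U_i^*(U_iBU_i^*)^{-1}U_i$ by the block-row structure $U^* = (U_1^*,\ldots,U_m^*)$. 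So these two lines are literally the same identity.

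Next, for \eqref{eq.App2} $\Rightarrow$ \eqref{eq.App1}, I would first recover the homogeneity condition by taking traces: from $B^{-1} = U^*M^{-1}U$ one gets $n = \mathrm{tr}(I_n) = \mathrm{tr}(B\,U^*M^{-1}U) = \mathrm{tr}(M^{-1}UBU^*)$, and since $M^{-1}UBU^*$ is block-structured with $i$-th diagonal block $p_i^{-1}(U_iBU_i^*)^{-1}(U_iBU_i^*) = p_i^{-1} I_{n_i}$, this trace equals $\sum_{i=1}^m n_i/p_i$, giving \eqref{eq.homogen}. For the inequality $UBU^* \leq PD_{UBU^*}$, I would conjugate: $UBU^* \leq M$ is equivalent to $M^{-1/2}UBU^*M^{-1/2} \leq I_N$, i.e. $\|M^{-1/2}UBU^*M^{-1/2}\|_{OP}\leq 1$. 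But $M^{-1/2}UBU^*M^{-1/2} = (M^{-1/2}UB^{1/2})(M^{-1/2}UB^{1/2})^*$ has the same nonzero eigenvalues as $(M^{-1/2}UB^{1/2})^*(M^{-1/2}UB^{1/2}) = B^{1/2}U^*M^{-1}UB^{1/2} = B^{1/2}B^{-1}B^{1/2} = I_n$ by \eqref{eq.App2}; hence all eigenvalues are $0$ or $1$ and the operator norm is $\leq 1$, as needed.

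For the remaining implication \eqref{eq.App1} $\Rightarrow$ \eqref{eq.App2} — which I expect to be the main obstacle — the point is to upgrade the inequality $UBU^*\leq M$ to an equality after the symmetrizing conjugation. Set $S := B^{1/2}U^*M^{-1}UB^{1/2}$, a symmetric positive semidefinite $n\times n$ matrix. By the eigenvalue computation above, $S$ has the same nonzero eigenvalues as $M^{-1/2}UBU^*M^{-1/2}$, whose eigenvalues all lie in $[0,1]$ by \eqref{eq.A-R-Y-Up}; moreover $S = (M^{-1/2}UB^{1/2})^*(M^{-1/2}UB^{1/2})$ with $M^{-1/2}UB^{1/2}$ an $N\times n$ matrix of rank $n$ (since $B$ is invertible and $U$ has rank $n$), so $S$ is invertible, hence all its eigenvalues lie in $(0,1]$. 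Now the homogeneity hypothesis enters: $\mathrm{tr}(S) = \mathrm{tr}(M^{-1}UBU^*) = \sum_{i=1}^m n_i/p_i = n$ by the block computation from the previous paragraph and \eqref{eq.homogen}. A positive definite $n\times n$ matrix all of whose eigenvalues lie in $(0,1]$ and whose trace is $n$ must have every eigenvalue equal to $1$, hence $S = I_n$; that is, $B^{1/2}U^*M^{-1}UB^{1/2} = I_n$, and multiplying by $B^{-1/2}$ on both sides gives \eqref{eq.App2}. The only delicate point is confirming that $\mathrm{tr}(M^{-1}UBU^*)$ really equals $\sum n_i/p_i$ without already assuming \eqref{eq.App2}: this follows because $M^{-1}UBU^*$, while not symmetric, is block-structured with $i$-th diagonal block $p_i^{-1}(U_iBU_i^*)^{-1}(U_iBU_i^*) = p_i^{-1}I_{n_i}$ regardless of the off-diagonal blocks, so its trace is $\sum_{i=1}^m \mathrm{tr}(p_i^{-1}I_{n_i}) = \sum_{i=1}^m n_i/p_i$.
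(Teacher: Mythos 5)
Your proof is correct and follows essentially the same approach as the paper: factor $B$ as a square (you use $B^{1/2}$, the paper uses $\Sigma$ with $B=\Sigma\Sigma^*$), conjugate to rewrite \eqref{eq.App2} as $(M^{-1/2}UB^{1/2})^*(M^{-1/2}UB^{1/2})=I_n$, exploit the fact that $XX^*$ and $X^*X$ share nonzero eigenvalues, and use the trace identity $\mathrm{tr}(M^{-1}UBU^*)=\sum_i n_i/p_i$ together with homogeneity to upgrade $\leq$ to equality. The only cosmetic differences are that you read off the trace from the diagonal blocks of $M^{-1}UBU^*$ rather than via the paper's normalized matrices $\widetilde{U_{i\Sigma}}$, and you close the last step with a direct eigenvalue count instead of the paper's general fact that $A_1\leq A_2$ with $\mathrm{tr}(A_1)=\mathrm{tr}(A_2)$ forces $A_1=A_2$.
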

\begin{proof}  
 Clearly \eqref{eq.App2} and \eqref{eq.App3} are equivalent.
 Let's see first how \eqref{eq.App2} implies \eqref{eq.App1}. 
 Write $B=\Sigma\Sigma^*$ and $C_B^{-1}:=PD_{UBU^*}$. Then \eqref{eq.App2} 
 can be written equivalently as
 \begin{equation} \label{eq.lehec}
  \big(U\Sigma\big)^*C_B\big(U\Sigma\big) 
  =\left(\sqrt{C_B}U\Sigma\right)^*
   \left(\sqrt{C_B}U\Sigma\right) = I_n
 \end{equation}
 and so $\left(\sqrt{C_B}U\Sigma\right) \left(\sqrt{C_B}U\Sigma\right)^* \leq I_N$
 or equivalently $UBU^* \leq P D_{UBU^*}$.
 The homogeneity condition $\sum_{i=1}^mn_i/p_i=n$ follows by taking trace in 
 \eqref{eq.lehec}. Indeed, if we set $U_{i\Sigma}:=U_i\Sigma$, a direct computation 
 shows that
 \begin{eqnarray*}
 (U\Sigma)^*\,C_B\,(U\Sigma) &=& 
     \sum_{i=1}^m c_i\,(U_i\Sigma)^*\,
     \big(U_i B U_i^*\big)^{-1}\,(U_i\Sigma) \\
 &=& \sum_{i=1}^m c_i\;U_{i\Sigma}^*\,\big(U_{i\Sigma}
     \,U_{i\Sigma}^*\big)^{-1}\, U_{i\Sigma} 
 = \sum_{i=1}^m c_i\;\big({\widetilde U_{i\Sigma}}\big)^*\;{\widetilde U_{i\Sigma}}
\end{eqnarray*}
where, ${\widetilde U_{i\Sigma}}:=\big(U_{i\Sigma}\,U_{i\Sigma}^*\big)^{-1/2}\,U_{i\Sigma}$. 
Note that ${\widetilde U_{i\Sigma}}\,\big({\widetilde U_{i\Sigma}}\big)^* =I_{n_i}$, and thus,
\begin{equation}\label{trace}
 n={\rm tr}(I_n)=
 {\rm tr}\Big(\big(U\Sigma\big)^*\,C_B\,\big(U\Sigma\big)\Big) =
 {\rm tr}\Big(\sum_{i=1}^m c_i
 \;\big({\widetilde U_{i\Sigma}}\big)^*\;{\widetilde U_{i\Sigma}}\Big)
 = \sum_{i=1}^m c_in_i=\sum_{i=1}^m {n_i}{p_i}.
\end{equation}
To see why \eqref{eq.App1} implies \eqref{eq.App2} recall that $UBU^* \leq P D_{UBU^*}$
can be written equivalently as 
$\left(\sqrt{C_B}U\Sigma\right)\left(\sqrt{C_B}U\Sigma\right)^* \leq I_N$ which implies
that 
\begin{equation}\label{eq.leq}
 \big(U\Sigma\big)^*C_B\big(U\Sigma\big) \leq I_n
\end{equation}
To complete the proof we have to show that equality holds in \eqref{eq.leq}. Indeed,
note that if $A_{1}, A_{2}$ are two positive definite matrices with $A_{1} \leq A_{2}$
and ${\rm tr} (A_{1} )= {\rm tr}(A_{2})$ then $A_{1}= A_{2}$. Thus, under the homogeneity 
condition, $\sum_{i=1}^mn_i/p_i=n$ we get that
\[
 {\rm tr}\Big(\big(U\Sigma\big)^*\,C_B\,\big(U\Sigma\big)\Big) 
 = \sum_{i=1}^m {n_i}{p_i} = n ={\rm tr}(I_n),
\]
and so equality holds in \eqref{eq.leq}. 
\end{proof}

\begin{remark}\rm
In \cite{Le}, Lehec proved a reformulation of the Brascamp-Lieb 
inequality, which states that \eqref{eq.R-Y-Up-intro} holds true under the 
assumption \eqref{eq.App3}. As an immediate consequence of Lemma \ref{prop1}, 
Theorem \ref{Gen-Rev-Young}(i) is exactly the Brascamp-Lieb inequality. We refer
to \cite{Le} for more details.
\end{remark}

\smallskip

We close this section with the proof of Theorem \ref{Gen-Rev-Young}.

\begin{proof}[Proof of Theorem \ref{Gen-Rev-Young}]
We prove (i) first. Note that under the given assumptions, we have that
 ${\rm rank}({ U}_i)=n_i\leq n={\rm rank}(B)$, for every $i\leq m$. 
 Thus, the
 $n_i\times n_i$ matrix $B_i:={ U}_iB{ U}_i^*$ has full rank $n_i$. 
 Consider a Gaussian random vector
 $$
 X=(X_i,\ldots,X_m)\sim N({ 0},UBU^*),
 $$
 where $X_i \sim N({ 0}, B_i)$. Using assumption \eqref{eq.A-R-Y-Up}, 
 we apply Theorem \ref{main}(i) to get that
 \begin{equation}\label{eq.Thrm1ii}
  \IE \prod_{i=1}^m f_i(X_i)\leq
  \prod_{i=1}^m \Big(\IE f_i(X_i)^{p_i}\Big)^{1/p_i}.
 \end{equation}
 Write $B=\Sigma\Sigma^*$ for some nonsingular matrix $\Sigma$. 
 Then the covariance matrix of $X_i$ can be written as
 $
 B_i={U}_iB{ U}_i^*=({U}_i\Sigma)({U}_i\Sigma)^*.
 $
 Thus, by the change of variables $y=\Sigma x$, we have that
 \begin{align*}
  \IE \prod_{i=1}^m f_i(X_i) &=
  \int_{\IR^n} \prod_{i=1}^m f_i \big({ U}_i \Sigma x \big)d\gamma_n(x) \\
  & =  \frac{1}{(2 \pi)^{\frac{n}{2}}\det(B)^{\frac{1}{2}}}
        \int_{\IR^n}\prod_{i=1}^m f_i\big({U}_i x\big)\exp\left(-\frac{1}{2} 
        \inner{x}{B^{-1}x}\right)dx.
 \end{align*}
 On the other hand,
 $$
 \IE f_i(X_i)^{p_i}= \frac{1}{(2 \pi)^{\frac{n_i}{2}}\det(B_i)^{\frac{1}{2}}}
        \int_{\IR^{n_i}}f_i(x_i)^{p_i}
        \exp\left(-\frac{1}{2} \inner{x_i}{B_i^{-1}x_i}\right)dx_i.
 $$
Finally, taking $\sigma B$ instead of $B$ for $\sigma>0$ and using the homogeneity 
condition \eqref{eq.homogen}, we can write \eqref{eq.Thrm1ii} equivalently as
 \begin{align*}
   &\int_{\IR^n} \prod_{i=1}^m f_i\big({ U}_i x\big)
    \exp\left(-\frac{1}{2\sigma} \inner{x}{B^{-1}x}\right)dx\\
   &\leq \left(\frac{\det(B)}{\prod_{i=1}^m 
    \det(B_i)^{\frac{1}{p_i}}}\right)^{\frac{1}{2}}
    \prod_{i=1}^m \left(\int_{\IR^{n_i}} f_i(x_i)^{p_i}
    \exp\left(-\frac{1}{2\sigma}
    \inner{x_i}{B_i^{-1}x_i}\right)dx_i\right)^{\frac{1}{p_i}}.
 \end{align*}
Letting $\sigma\rightarrow+\infty$, we get \eqref{eq.R-Y-Up-intro}. 

For the equality case, using lemma \ref{prop1}, and taking the functions 
\[
 f_i(x_i)=\exp(-p_i^{-1}\left<B_i^{-1}x_i,x_i\right>), 
\]
a direct computation  gives the equality in \eqref{eq.R-Y-Up-intro}. 
To prove (ii), one may proceed similarly by using Theorem \ref{main}(ii).
\end{proof}

\section{
         A generalization of Barthe's lemma. }\label{sec6}

We begin this section by recalling the following result of F. Barthe from \cite{BAYoung}.

\begin{proposition}
\label{Barthe's Lemma}
Assume that $p,q,r>1$ with $1/p+ 1/q = 1+ 1/r$ and set $c=\sqrt\frac{r^\prime}{q^\prime}$ 
and $s=\sqrt\frac{r^\prime}{p^\prime}$. For any $f,g,F, G$ continuous and positive 
functions in $L^{1}(\mathbb R)$ satisfying $\int f = \int F$ and $\int g = \int G$, we 
have that
\begin{align}
\begin{split}\label{BartheLemma-1}
&\left( \int \left( \int f^{\frac{1}{p}} (c x -s y ) \,
 g^{\frac{1}{q}} ( s x + c y ) \,dx \right)^{r} d y \right)^{\frac{1}{r}} \\
& \qquad \qquad \leq \int \left( \int F^{\frac{r}{p}} (c X - s Y) \,
  G^{\frac{r}{q}} (s X + c Y) \,dY \right)^{\frac{1}{r}} dX .
\end{split}
\end{align}
\end{proposition}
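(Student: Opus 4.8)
The plan is to bound the left-hand side of \eqref{BartheLemma-1} from above and the right-hand side from below by the \emph{same} explicit constant, using the sharp Young inequality \eqref{sy} for the former and the sharp reverse Young inequality \eqref{rsy} for the latter (both of which were obtained from Theorem \ref{Gen-Rev-Young} in Section \ref{sec5}). The structural observation is that, after the rotation $(x,y)\mapsto(cx-sy,\,sx+cy)$, each of the two inner integrals becomes a dilated one-dimensional convolution, so that exactly one of these two sharp inequalities applies to it, and the two extremal constants turn out to coincide.

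First I would normalize. Both sides of \eqref{BartheLemma-1} scale like $(\int f)^{1/p}$ under $(f,F)\mapsto(\lambda f,\lambda F)$ and like $(\int g)^{1/q}$ under $(g,G)\mapsto(\mu g,\mu G)$, while the hypotheses $\int f=\int F$ and $\int g=\int G$ are preserved, so we may assume $\int f=\int F=\int g=\int G=1$. Note that $1/p'+1/q'=2-(1/p+1/q)=1-1/r=1/r'$, whence $c^2+s^2=r'(1/p'+1/q')=1$ and the above change of variables is orthogonal on $\IR^2$; moreover $p\le r$ and $q\le r$, since e.g. $1/p=1+1/r-1/q\ge 1/r$.

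For the left-hand side, the substitution $a=cx-sy$ (using $s^2+c^2=1$) gives $\int f^{1/p}(cx-sy)\,g^{1/q}(sx+cy)\,dx=(cs)^{-1}(\phi*\gamma)(y)$, where $\phi(b)=f^{1/p}(-b/s)$ and $\gamma(t)=g^{1/q}(t/c)$, so that $\|\phi\|_p=(\int f)^{1/p}s^{1/p}=s^{1/p}$ and $\|\gamma\|_q=c^{1/q}$. Since $p,q,r\ge1$ with $1/p+1/q=1+1/r$, the sharp Young inequality \eqref{sy} in dimension one applies, so the left-hand side of \eqref{BartheLemma-1} equals $(cs)^{-1}\|\phi*\gamma\|_r\le (cs)^{-1}\kappa\,\|\phi\|_p\|\gamma\|_q=\kappa\,s^{-1/p'}c^{-1/q'}$, with $\kappa:=C_pC_q/C_r$. (If one prefers to use Theorem \ref{Gen-Rev-Young} itself here, one may instead dualize the outer $L^r$-norm and apply Theorem \ref{Gen-Rev-Young}(i) to the resulting integral over $\IR^2$ against $f^{1/p}(U_1\cdot)$, $g^{1/q}(U_2\cdot)$, $k(U_3\cdot)$, with $U_1=(c,-s)$, $U_2=(s,c)$, $U_3=(0,1)$ and exponents $p,q,r'$, the matrix $B$ being supplied by Lemma \ref{prop1}; this reaches the same bound.) Symmetrically, the substitution $a=cX-sY$ turns the inner integral on the right-hand side into $(cs)^{-1}(\Phi*\Gamma)(X)$, where $\Phi(t)=F^{r/p}(t/c)$ and $\Gamma(t)=G^{r/q}(t/s)$, so $\|\Phi\|_{p/r}=c^{r/p}$ and $\|\Gamma\|_{q/r}=s^{r/q}$, and the right-hand side of \eqref{BartheLemma-1} equals $(cs)^{-1/r}\int(\Phi*\Gamma)^{1/r}=(cs)^{-1/r}\,\|\Phi*\Gamma\|_{1/r}^{1/r}$. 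Because $p/r,q/r,1/r\le1$ with $(p/r)^{-1}+(q/r)^{-1}=r+1=1+(1/r)^{-1}$, the sharp reverse Young inequality \eqref{rsy} in dimension one gives $\|\Phi*\Gamma\|_{1/r}\ge \kappa'\,\|\Phi\|_{p/r}\|\Gamma\|_{q/r}$ with $\kappa':=C_{p/r}C_{q/r}/C_{1/r}$, and hence the right-hand side is at least $(\kappa')^{1/r}(cs)^{-1/r}(c^{r/p}s^{r/q})^{1/r}=(\kappa')^{1/r}c^{1/q'}s^{1/p'}$, using $1/p-1/r=1/q'$ and $1/q-1/r=1/p'$.

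It therefore remains only to compare the two constants, i.e.\ to check that $\kappa\,s^{-1/p'}c^{-1/q'}\le(\kappa')^{1/r}c^{1/q'}s^{1/p'}$, equivalently (substituting $c^2=r'/q'$, $s^2=r'/p'$) that $\kappa\le(\kappa')^{1/r}(r'/p')^{1/p'}(r'/q')^{1/q'}$; in fact equality holds here. I expect this constant identity to be the only genuine obstacle: it is the classical duality between the sharp Young and sharp reverse Young constants, and with $C_u^2=|u|^{1/u}/|u'|^{1/u'}$ together with $1/p+1/q=1+1/r$ and the induced relations $(p/r)'=p/(p-r)$, $(q/r)'=q/(q-r)$, $(1/r)'=1/(1-r)$, it reduces to a (somewhat tedious) bookkeeping of fractional powers. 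The remaining steps — the homogeneity reduction, the two changes of variables, and the two invocations of \eqref{sy} and \eqref{rsy} — are routine.
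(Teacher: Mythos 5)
Your proof is correct but takes a genuinely different route from the paper. The paper recalls Proposition~\ref{Barthe's Lemma} without proof (citing \cite{BAYoung}) and instead proves the far more general Theorem~\ref{THE prop}, of which Barthe's lemma is essentially the scalar case $m=2$, $n=1$, $\rho=r$, $\lambda=r'/q'$; that proof dualizes the outer $L^\rho$-norm and applies Theorem~\ref{Gen-Rev-Young} to an $(N+n)\times N$ block matrix $V$ built from $\sqrt{C}U$ and $W$. You instead argue directly: after the orthogonal change of variables each inner integral becomes a dilated one-dimensional convolution, the left-hand side of \eqref{BartheLemma-1} is bounded above by the sharp Young inequality~\eqref{sy} and the right-hand side is bounded below by the reverse Young inequality~\eqref{rsy}, and the argument closes once the two resulting constants are seen to coincide. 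This is shorter and more transparent for the two-function scalar case, at the cost of not scaling to the higher-rank statements Theorem~\ref{THE prop} is designed to reach; it also reverses Barthe's own logical order (he used this lemma to prove \eqref{sy} and \eqref{rsy}), which causes no circularity in the present paper because Section~\ref{sec5} derives those inequalities independently from Theorem~\ref{Gen-Rev-Young}. The one step you leave open, the constant identity $\kappa=(\kappa')^{1/r}(r'/p')^{1/p'}(r'/q')^{1/q'}$, does hold: using $r/p-1=r/q'$, $r/q-1=r/p'$ and $r-1=r/r'$ one finds $C_{p/r}^2=(p/r)^{r/p}(r/q')^{-r/q'}$, $C_{q/r}^2=(q/r)^{r/q}(r/p')^{-r/p'}$ and $C_{1/r}^{-2}=r^{r}(r-1)^{r-1}$, and a direct computation of $(\kappa')^{2/r}(r'/p')^{2/p'}(r'/q')^{2/q'}$ then collapses to $p^{1/p}q^{1/q}\,p'^{-1/p'}q'^{-1/q'}\,r'^{1/r'}r^{-1/r}=\kappa^2$, confirming the equality you anticipated.
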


As we have mentioned in the introduction, starting from this lemma, Barthe 
presented a simplified proof for both the sharp Young and reverse Young 
inequalities (\cite{BAYoung}). Later he further showed in \cite{BaArxiv}, 
that a generalization of this lemma to more than two functions, can been 
used to prove the rank 1 case of both, the Brascamp-Lieb and Barthe inequalities. 
In this section, using Theorem \ref{Gen-Rev-Young}, we will derive a more general 
form of his lemma, from wich one can retrieve the general case (${\rm rank} > 1$) 
of the Brascamp-Lieb and Barthe inequalities. 

For notational convenience, we set the following two conditions, that we will use 
throughout this section. 
\begin{itemize}
\item[$(A1)$] Let $m,n,n_{1},\ldots,n_{m}$ be positive integers. Denote by ${U}_{i}$ 
              a $n_{i}\times n$ matrix with $\mbox{rank}({U}_i)=n_i$ for $i\leq m$.
              Set $N=\sum_{i=1}^mn_i$, and let ${U}$ be the $N\times n$ matrix with 
              block rows ${U}_1,\ldots,{U}_m$.
\item[$(A2)$] Let $c_1,\ldots,c_m$ be positive numbers and $A$ be a 
              $n\times n$-dimensional real symmetric and positive definite matrix. 
              Set $A_i={U}_iA{U}_i^*$ for $i\leq m$ and suppose that
              \begin{align}\label{sec6.1:add:eq1}
               {U}^*C_A{U}=A^{-1},
              \end{align}
              where $C_{A}:={\rm diag}\bigl(c_{1} A_{1}^{-1},\ldots,c_{m} A_{m}^{-1}\bigr)$.
\end{itemize}

\begin{remark}\rm 
 Since $\mbox{rank}(U_i)=n_i$ and ${A}$ is symmetric positive definite, 
 it implies that $n_i\leq n$, $A_i^{-1}$ exists and $C_A$ is well-defined. Moreover, 
 from \eqref{sec6.1:add:eq1}, Lemma \ref{prop1} implies the homogeneity condition  
 \begin{equation*} \label{Leh-1}
  \sum_{i=1}^{m} c_{i} n_{i} = n
 \end{equation*}
 together with $UAU^*\leq C_A^{-1}$. Thus, $N\geq n$.
\end{remark}

\begin{remark}\rm 
 The assumption \eqref{sec6.1:add:eq1} ensures that there exists a $N\times(N-n)$ 
 matrix $W$ with $\mbox{rank}(W)=N-n$ such that 
 \begin{equation} \label{THEprop-Leh-2}
  \sqrt{C_{A}} {{U}} A {{U}}^{\ast} \sqrt{C_{A}} + W W^{\ast} = I_{N}.
 \end{equation}
 In other words, the row vectors of the $N\times N$ matrix $(\sqrt{C_A}{U}\sqrt{A},W)$ 
 form an orthonormal basis of $\mathbb{R}^N.$ To see this, recall that for any real 
 matrix $M$, $M^*M$ and $MM^*$ have the same non-zero eigenvalues with the same algebraic 
 multiplicities. If we set $M= \sqrt{C_A}U\sqrt{A}$, then \eqref{sec6.1:add:eq1} reads 
 $M^*M=I_n$ and thus, $I_N-\sqrt{C_{A}} {{U}} A {{U}}^{\ast} \sqrt{C_{A}}=I_N-MM^*$ is 
 symmetric positive semi-definite and has rank $N-n$. Lemma \ref{lemma.Umatrix} guarantees 
 then, the existence of $W$.
\end{remark}

\begin{theorem}\label{THE prop}
 Assume that $(A1)$ and $(A2)$ hold and $W$ satisfies \eqref{THEprop-Leh-2}. 
 For $\rho>0$, set
 \begin{align}\label{add:eq3}
  \Gamma_\rho&=\frac{(\det(A))^{\frac{1}{2}}}{\rho^{\frac{N-n}{2\rho}}}
  \prod_{i=1}^m\frac{(c_ip_i)^{\frac{n_i}{2p_i}}}{(\det(A_i))^{\frac{1}{2p_{i}}}},
 \end{align}
 where
 \begin{equation} \label{THEprop-3}
  p_{i}:=\frac{1}{c_{i} ( 1+ \frac{1-c_{i}}{\rho c_{i}})}.
 \end{equation}
 Let $f_i$ be nonnegative measurable function on $\mathbb{R}^{n_i}$ for $i\leq m$ 
 and define 
 \[
 F(x,y)=\prod_{i=1}^m f_i\biggl({U}_ix+\frac{1}{\sqrt{c_i}}
 \sqrt{A_i}W_iy\biggr),\quad (x,y)\in\mathbb{R}^{n}\times\mathbb{R}^{N-n}.
 \]
 $(i)$ For $\rho\geq 1$,
  \begin{equation}\label{THEprop-Leh-4}
   \int_{\mathbb R^{n}} \left(\int_{\IR^{N-n}} F^\rho(x,y)\,dy\right)^{\frac{1}{\rho}}dx
   \geq \Gamma_\rho\prod_{i=1}^m\|f_{i}\|_{p_{i}} \geq 
   \left( \int_{\mathbb R^{N-n}} 
   \left( \int_{\mathbb R^{n}}F(x,y)\,dx \right)^{\rho} d y \right)^{\frac{1}{\rho}}.
  \end{equation}
 $(ii)$ For $0<\rho\leq 1$,
  \begin{equation}\label{THEprop-Leh-5}
   \int_{\mathbb R^{n}} \left(\int_{\IR^{N-n}} F^\rho(x,y)\,dy\right)^{\frac{1}{\rho}}dx
   \leq \Gamma_\rho\prod_{i=1}^m\|f_{i}\|_{p_{i}} \leq 
   \left( \int_{\mathbb R^{N-n}} 
   \left( \int_{\mathbb R^{n}}F(x,y)\,dx \right)^{\rho} d y \right)^{\frac{1}{\rho}}.
  \end{equation}
 Moreover, one has equality in \eqref{THEprop-Leh-4} and \eqref{THEprop-Leh-5} if 
 $\rho=1$ ( for any choice of the $f_i$'s) or if 
 \[
 f_{i}(x)=\exp\bigl(-c_{i}\left<x,A_i^{-1}x\right>\bigr),
 \;\;x\in\IR^{n_i} \qquad 1\leq i\leq m
 \]
 (for any choice of $\rho >0$).
\end{theorem}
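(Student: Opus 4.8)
The plan is to derive all four inequalities from Theorem~\ref{Gen-Rev-Young} by lifting the $f_i$ to $\mathbb{R}^N=\mathbb{R}^n\times\mathbb{R}^{N-n}$ and adjoining a single auxiliary function that encodes the outer integral through (reverse) H\"older duality. For $1\le i\le m$ set the $n_i\times N$ matrix $V_i=(U_i,\;c_i^{-1/2}A_i^{1/2}W_i)$, so that $F(x,y)=\prod_{i=1}^m f_i(V_i(x,y))$, and let $B=\mathrm{diag}(A,\rho^{-1}I_{N-n})$ on $\mathbb{R}^N$. The computation driving everything is linear-algebraic: by \eqref{THEprop-Leh-2} the $i$-th diagonal block of $WW^*=I_N-\sqrt{C_A}\,UAU^*\sqrt{C_A}$ is $(1-c_i)I_{n_i}$, whence $V_iBV_i^*=(1+\tfrac{1-c_i}{\rho c_i})A_i=(c_ip_i)^{-1}A_i$; moreover $V_{m+1}BV_{m+1}^*=\rho^{-1}I_{N-n}$ for $V_{m+1}:=(0,I_{N-n})$ and $V_0BV_0^*=A$ for $V_0:=(I_n,0)$. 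Since $(\sqrt{C_A}\,U\sqrt{A},\,W)$ is orthogonal, its columns are orthonormal, so $W^*W=I_{N-n}$ and $U^*\sqrt{C_A}\,W=0$; together with $U^*C_AU=A^{-1}$ this gives $\sum_{i\le m}c_i\,V_i^*A_i^{-1}V_i=\mathrm{diag}(A^{-1},I_{N-n})$, from which one reads off the decomposition of identity
$$
B^{-1}=\sum_{i=1}^m\tfrac1{p_i}V_i^*(V_iBV_i^*)^{-1}V_i+\tfrac1{\rho'}V_{m+1}^*(V_{m+1}BV_{m+1}^*)^{-1}V_{m+1},
$$
and, likewise, $B^{-1}=\sum_{i=1}^m(\rho/p_i)V_i^*(V_iBV_i^*)^{-1}V_i+(1-\rho)V_0^*(V_0BV_0^*)^{-1}V_0$. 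Using the relation $\sum_i c_in_i=n$ that comes with $(A2)$, the homogeneity condition \eqref{eq.homogen} holds for the exponents $(p_1,\dots,p_m,\rho')$ on $(V_1,\dots,V_{m+1})$ and for $(p_1/\rho,\dots,p_m/\rho,(1-\rho)^{-1})$ on $(V_1,\dots,V_m,V_0)$, and in each case the constant in Theorem~\ref{Gen-Rev-Young} equals $\Gamma_\rho$, resp.\ $\Gamma_\rho^{\rho}$, after a short determinant computation.

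The four inequalities then split into two symmetric pairs. Put $\Phi(y)=\int_{\mathbb{R}^n}F(x,y)\,dx$ and $\Psi(x)=\int_{\mathbb{R}^{N-n}}F(x,y)^\rho\,dy$, so that $(\int(\int F\,dx)^\rho dy)^{1/\rho}=\|\Phi\|_{L^\rho(\mathbb{R}^{N-n})}$ and $\int(\int F^\rho dy)^{1/\rho}dx=\|\Psi\|_{L^{1/\rho}(\mathbb{R}^n)}^{1/\rho}$. For the $\Phi$-pair: for every $h$ on $\mathbb{R}^{N-n}$ one has $\int_{\mathbb{R}^N}\prod_i f_i(V_iz)\,h(y)\,dz=\int\Phi h$, so applying Theorem~\ref{Gen-Rev-Young} to $f_1,\dots,f_m,h$ with maps $V_1,\dots,V_{m+1}$ and exponents $p_1,\dots,p_m,\rho'$ gives, by ordinary H\"older duality when $\rho\ge1$ (then $\rho'\ge1$, part~(i) applies, its hypothesis supplied by Lemma~\ref{prop1} and the first decomposition of identity), $\|\Phi\|_{L^\rho}\le\Gamma_\rho\prod_i\|f_i\|_{p_i}$; and by reverse H\"older duality when $0<\rho<1$ (then $\rho'<0$, part~(ii) applies), $\|\Phi\|_{L^\rho}\ge\Gamma_\rho\prod_i\|f_i\|_{p_i}$ — these are the right-hand inequalities of \eqref{THEprop-Leh-4} and \eqref{THEprop-Leh-5}. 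For the $\Psi$-pair: for every $g$ on $\mathbb{R}^n$ one has $\int_{\mathbb{R}^N}\prod_i f_i^\rho(V_iz)\,g(x)\,dz=\int\Psi g$, so Theorem~\ref{Gen-Rev-Young} applied to $f_1^\rho,\dots,f_m^\rho,g$ with maps $V_1,\dots,V_m,V_0$ and exponents $p_1/\rho,\dots,p_m/\rho,(1-\rho)^{-1}$, together with $\|f_i^\rho\|_{p_i/\rho}=\|f_i\|_{p_i}^\rho$ and the duality relation for $\|\cdot\|_{L^{1/\rho}}$ (H\"older when $0<\rho<1$, so $1/\rho>1$ and part~(i) applies; reverse H\"older when $\rho\ge1$, where the exponent $(1-\rho)^{-1}$ on $g$ is negative and part~(ii) applies), yields the left-hand inequalities of \eqref{THEprop-Leh-4} and \eqref{THEprop-Leh-5}.

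The equality cases come for free from this set-up: when $\rho=1$ every inner integral is just $\int dy$, so both chains reduce to Fubini's identity $\int_{\mathbb{R}^N}F=\int_{\mathbb{R}^N}F$; when $f_i(x)=\exp(-c_i\langle x,A_i^{-1}x\rangle)$, the decomposition of identity gives $\prod_i f_i(V_iz)=\exp(-\langle x,A^{-1}x\rangle-|y|^2)$, a separable Gaussian, and all integrals are then computed explicitly and match $\Gamma_\rho\prod_i\|f_i\|_{p_i}$.

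I expect the main obstacle to be in two places. First, the linear-algebra bookkeeping of the first paragraph — the identity $W_iW_i^*=(1-c_i)I_{n_i}$, the two decompositions of identity, and the evaluation of the Theorem~\ref{Gen-Rev-Young} constant as $\Gamma_\rho$ (resp.\ $\Gamma_\rho^\rho$) — is where the full force of $(A1)$, $(A2)$ and \eqref{THEprop-Leh-2} gets used and must be carried through with care. Second, the two inequalities that invoke part~(ii) of Theorem~\ref{Gen-Rev-Young} do so with a \emph{negative} exponent on the auxiliary function, so the matrix $P$ there is not positive definite and Lemma~\ref{prop1} — whose proof passes through $\sqrt{P\,D_{UBU^*}}$ — does not apply verbatim; in those cases one must instead verify the required matrix inequality $VBV^*\ge P\,D_{VBV^*}$ (resp.\ the analogous one with $V_0$) directly from the explicit block form of $VBV^*$, tracking signs, which is the genuinely delicate step.
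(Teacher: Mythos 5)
Your proposal is correct and follows the same overall scheme as the paper's own proof: express each mixed norm by (reverse) H\"older duality, append one auxiliary function $h$ or $g$ living on the complementary coordinate block via $V_{m+1}=(\mathbb{O},I_{N-n})$ or $V_0=(I_n,\mathbb{O})$, and apply Theorem~\ref{Gen-Rev-Young} to the $(m+1)$-tuple. The one genuine departure is that you keep $A$ general from the outset and take $B=\mathrm{diag}(A,\rho^{-1}I_{N-n})$, whereas the paper first normalizes to $A=I_n$, $U_iU_i^*=I_{n_i}$ (Part~I) and then undoes the normalization by the substitution $h_i(x)=f_i(\sqrt{A_i}x)$, $\widetilde U_i=A_i^{-1/2}U_i\sqrt A$ (Part~II). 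Your version absorbs that reduction into the decomposition-of-$B^{-1}$ identity, using exactly the three consequences of \eqref{THEprop-Leh-2} you isolate ($W_iW_i^*=(1-c_i)I_{n_i}$, $W^*W=I_{N-n}$, $U^*\sqrt{C_A}\,W=0$), and the constant $\Gamma_\rho$ (resp.\ $\Gamma_\rho^\rho$) falls out directly from the determinant bookkeeping without a change of variables; the paper otherwise uses the same $V_i$ (your $V_i=(U_i,c_i^{-1/2}A_i^{1/2}W_i)$ specializes to its $(\sqrt{c_i}\,U_i,W_i)$) and the same exponents, with its $\tilde B=\rho B$ being an irrelevant rescaling. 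Finally, your caution that Lemma~\ref{prop1} does not apply verbatim when the auxiliary exponent $\rho'$ or $(1-\rho)^{-1}$ is negative is exactly right and matches the paper's handling: it likewise bypasses Lemma~\ref{prop1} in those two cases and verifies $VBV^*\gtrless QD_{VBV^*}$ directly by a Schur complement (Lemma~\ref{Lemma-Bha}); in your formulation the Schur complement again collapses to zero, so the step you flag as delicate does go through — you simply stopped short of writing it out. In short: same argument, leaner bookkeeping that merges the paper's two parts into one.
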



The verification of the equality cases in \eqref{THEprop-Leh-4} and \eqref{THEprop-Leh-5} 
can be easily carried out with a routine computation and will be omitted. To show 
\eqref{THEprop-Leh-4} and \eqref{THEprop-Leh-5}, we will apply Theorem \ref{Gen-Rev-Young} 
with suitable chosen matrices. The proof consists in two parts. In the first part 
we consider the ``geometric" form, where ${A=I_n}$ and ${{U}_i{U}_i^*=I_{n_i}}$ for 
${i\leq m}$, while in the second part we deal with the general form of the theorem. 

The core of our argument is based on a idea of Brascamp and Lieb from \cite{BL}, where
the authors proved that, the Precopa-Leindler inequality can be retrieved from their
reverse sharp Young inequality. Theorem \ref{THE prop} is the result of our effort to 
generalize this proof of Brascamp and Lieb, in order to retrieve Barthe's inequality, 
from our Theorem \ref{Gen-Rev-Young}(ii).

First, we recall a standard result for positive definite matrices, for the proof of 
which we refer to Theorem 1.3.3 in the book \cite{Bha} 

\begin{lemma}\label{Lemma-Bha}
Let $k,d$ be positive integers, $A$ and $B$ be two $k\times k$ and $d\times d$ real 
symmetric and positive definite matrices respectively and $X$ be a $d\times k$ matrix. 
Then
\begin{equation}
\left(\begin{array}{cc}
A & X^* \\
X & B
\end{array}\right) \geq 0
\;\Leftrightarrow\;
B - XA^{-1}X^* \geq 0.
 \end{equation}
\end{lemma}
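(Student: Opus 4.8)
The statement is the standard Schur-complement characterization, so the plan is to exhibit the block congruence that diagonalizes the matrix and then use that congruence by an invertible matrix preserves positive semi-definiteness. Since $A$ is positive definite, $A^{-1}$ exists, and a direct multiplication verifies the identity
\[
 \left(\begin{array}{cc} A & X^* \\ X & B \end{array}\right)
 = \left(\begin{array}{cc} I_k & 0 \\ XA^{-1} & I_d \end{array}\right)
   \left(\begin{array}{cc} A & 0 \\ 0 & B-XA^{-1}X^* \end{array}\right)
   \left(\begin{array}{cc} I_k & A^{-1}X^* \\ 0 & I_d \end{array}\right).
\]
Because $A^*=A$, the leftmost factor is the adjoint of the rightmost; writing $M:=\left(\begin{array}{cc} I_k & A^{-1}X^* \\ 0 & I_d \end{array}\right)$, which is invertible (indeed $\det M=1$), this reads
\[
 \left(\begin{array}{cc} A & X^* \\ X & B \end{array}\right)
 = M^*\,{\rm diag}\bigl(A,\ B-XA^{-1}X^*\bigr)\,M.
\]

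Next I would invoke the elementary fact that for an invertible matrix $M$ and a real symmetric matrix $N$ one has $N\geq 0$ if and only if $M^*NM\geq 0$; this follows from $\inner{M^*NM\,v}{v}=\inner{N(Mv)}{Mv}$ together with the bijectivity of $v\mapsto Mv$. Applying this with $N={\rm diag}(A,\ B-XA^{-1}X^*)$ shows that the block matrix is positive semi-definite precisely when this block-diagonal matrix is, i.e.\ precisely when both $A\geq 0$ and $B-XA^{-1}X^*\geq 0$. Since $A>0$ by hypothesis the first condition holds automatically, leaving exactly the condition $B-XA^{-1}X^*\geq 0$, which is the claimed equivalence.

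There is no substantive obstacle in this argument; the only point requiring a little care is the adjoint bookkeeping in the factorization --- with $X$ of size $d\times k$ one has $A^{-1}X^*$ of size $k\times d$ and $XA^{-1}$ of size $d\times k$, and $(A^{-1}X^*)^*=XA^{-1}$ holds exactly because $A$ is symmetric, which is what makes the two outer factors genuine adjoints of one another and hence the decomposition a congruence rather than an arbitrary similarity.
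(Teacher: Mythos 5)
Your proof is correct and is the standard Schur-complement argument. The paper itself gives no proof of this lemma; it simply cites Theorem~1.3.3 of Bhatia's \emph{Positive Definite Matrices}, whose proof is precisely the block congruence you exhibit. The factorization checks out, the adjoint bookkeeping is right (using $(A^{-1}X^*)^*=XA^{-1}$ via symmetry of $A$), and the reduction to the block-diagonal case via invertible congruence, followed by discarding the automatically satisfied condition $A\geq 0$, is exactly what is needed.
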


\begin{proof}[Proof of Theorem \ref{THE prop}]
 We recall some standard notations. For any positive integers $k,d,$ we denote the 
 $k\times d$-dimensional zero matrix by $\mathbb{O}_{k\times d}$ or simply $\mathbb{O}$ 
 whenever there is no ambiguity. For any real number $r,$ $r'$ stands for the H\"{o}lder 
 conjugate exponent. Since for $\rho=1$, Theorem \ref{THE prop} holds trivially true by 
 Fubini's Theorem, we may assume without loss of generality that $\rho\neq 1$.
 The first part of our argument runs as follows.

 \smallskip

 \noindent {\bf Part I} : ${A=I_n}$ and ${{U}_i{U}_i^*=I_{n_i}}$ for ${i\leq m}$.

 In this case, we can rewrite  
 \eqref{sec6.1:add:eq1} and \eqref{THEprop-Leh-2} as
 \begin{align}
 \begin{split}
 \label{add:sc:eq2}
 {U}^*C{U}&=I_n,
 \end{split}\\
 \begin{split}
 \label{add:sc:eq3}
 \sqrt{C}{U}{U}^*\sqrt{C}+WW^*&=I_N,
 \end{split}
 \end{align}
 where $C:=C_{I_n}={\rm diag}(c_1I_{n_1},\ldots,c_mI_{n_m})$. 
 Set $g_i$ by $g_{i}(\sqrt{c_i}x_i)=f_i(x_i)$ for $x_i\in\mathbb{R}^{n_i}$ and define
 \begin{align*}
  {{G}}(x)&= \int_{\IR^{N-n}} \prod_{i=1}^m
  g_{i}^{\rho}  \big( \sqrt{c_i}{U}_{i}x + W_{i}y\big)dy,\qquad x\in\IR^n,
 \end{align*}
 and
 \begin{align*}
  \tilde{{G}}(y)&= \int_{\IR^{n}} \prod_{i=1}^m
   g_{i}\big( \sqrt{c_i}{U}_{i}x + W_{i}y\big)\,dx,\qquad y\in\IR^{N-n}.
 \end{align*}
 First, we prove the left-hand side of \eqref{THEprop-Leh-4}. 
 We set $r:=1/\rho$, and using the duality relation \eqref{eq.Down-Dual}
 we write
 \begin{align*}
  \int_{\mathbb R^{n}} \left(\int_{\IR^{N-n}} F^\rho(x,y)\,dy\right)^{\frac{1}{\rho}}\,dx
   &=\int_{\mathbb R^n} 
     \left(\int_{\IR^{N-n}} \prod_{i=1}^m
     g_{i}^{\rho}  \big( \sqrt{c_i}U_ix + W_iy\big)\,dy\right)^{\frac{1}{\rho}}\,dx \\
   &=\norm{G}_r^r \\
   &=\left( \inf_{\norm{H}_{r^\prime}=1}
     \int_{\mathbb R^n} H(x)G(x)\,dx\right)^r \\
   &=\left( \inf_{\norm{H}_{r^\prime}=1}
     \int_{\mathbb R^N} H(V_0 z) \prod_{i=1}^m
     g_{i}^\rho\big( V_i z\big)\,dz\right)^{1/\rho}
 \end{align*}
 where $V_0=\bigl(I_n\;\,\mathbb{O}_{n\times(N-n)}\bigr)$, and 
 $V_i=\bigl(\sqrt{c_i}U_i\;\,W_i\bigr)$, $1\leq i\leq m$.
 
 In what follows, we prove that the following inequality holds true
 \begin{align} \label{eq.ProofTh5.01}
  \int_{\mathbb R^N} H(V_0 z) \prod_{i=1}^m g_i^\rho\big( V_i z\big)\,dz
  \,\geq\, \Gamma_\rho^\rho \,\prod_{i=1}^m\|f_i\|_{p_i}^\rho,
 \end{align}
 for every nonnegative $H$ such that $\norm{H}_{r^\prime}=1$, and this will 
 prove the left hand side of \eqref{THEprop-Leh-4}. To do so, we use Theorem 
 \ref{Gen-Rev-Young}(ii) for the following choice of matrices:
 \begin{align}
  V = \left(\begin{array}{c}
  V_0\\
  V_1\\
  \vdots\\
  V_m \end{array}\right)
  = \left(\begin{array}{cc}
     I_n    & {\mathbb O}_{n\times (N-n)} \\
  \sqrt{C}U &           W                 \\
  \end{array}\right)
  \end{align}
  and
  \begin{align}
  B=\left(\begin{array}{cc}
  I_{n} & {\mathbb O} \\
  \mathbb O & \frac{1}{\rho} I_{N-n}
  \end{array}\right),\;\; 
  Q={\rm diag}\big(r'I_n,q_1 I_{n_{1}},\ldots,q_m I_{n_{m}}\big),
 \end{align}
 where $q_{i}:= p_i/\rho$. 
 
 \smallskip
 
 A straightforward computation gives that
 \begin{equation*}
  VBV*=\left(\begin{array}{cc}
     I_n    & U^*\sqrt{C} \\
  \sqrt{C}U &    R_\rho   \\
  \end{array}\right)
 \end{equation*}
 where $R_\rho=\sqrt{C}UU^*\sqrt{C} +\frac1\rho WW^*$. So, using the identity 
 \eqref{add:sc:eq3} and the assumption that $U_iU_i^*=I_{n_i}$, we get
 \begin{align*}
  D_{VBV^*}&:={\rm diag}\Big({V}_0B{V}_0^*,{V}_{1} 
      B{V}_{1}^{\ast},\ldots, {V}_{m}B{V}_{m}^{\ast}\Big)\nonumber \\
   & ={\rm diag}\biggl(I_{n},
      \Bigl(c_1+\frac{1-c_1}{\rho}\Bigr)I_{n_{1}},
      \ldots,\Bigl(c_m+\frac{1-c_m}{\rho}\Bigr)I_{n_{m}}\biggr) \nonumber\\
   & ={\rm diag}\biggl(I_{n},
      \frac{1}{p_1}I_{n_{1}},\ldots,\frac{1}{p_m}I_{n_{m}}\biggr)
 \end{align*}

 In order to apply Theorem \ref{Gen-Rev-Young}(ii), for this set of matrices, we 
 need to check its assumptions. Recall first that $\sum_{i=1}^{m} c_{i} n_{i}= n$ 
 and $\sum_{i=1}^{m} n_{i} = N$, and so 
 \[ 
 \frac{n}{r^\prime} + \sum_{i=1}^m \frac{n_i}{q_i}
 = n(1-\rho) + \rho \sum_{i=1}^m n_ic_i + \sum_{i=1}^m n_i (1-c_i) = N,
 \]
 and thus the homogeneity condition \eqref{eq.homogen} holds true. Moreover we 
 need to check that
 \begin{align}\label{add:eq1}
  VBV^*-QD_{VBV^*}=
  \left(\begin{array}{cc}
   (1-r^\prime) I_{n} & {U}^{\ast} \sqrt{C} \\
   \sqrt{C} {U} & \Delta_\rho
 \end{array}\right) \geq 0,
 \end{align}
 where 
 \begin{align*}
  \Delta_\rho 
   &= \sqrt{C}UU^*\sqrt{C}+\frac{1}{\rho}\,WW^* - 
      {\rm diag}\biggl(\frac{q_1}{p_1}I_{n_{1}},\ldots,\frac{q_m}{p_m}I_{n_{m}}\biggr) \\
   &= \sqrt{C}UU^*\sqrt{C} + WW^* - \left(1-\frac{1}{\rho}\right)WW^* - \frac1\rho\,I_N \\
   &= \left(1-\frac{1}{\rho}\right)\bigl(I_N-WW^*\bigr) 
    = \left(1-\frac{1}{\rho}\right) \sqrt{C}UU^*\sqrt{C}
 \end{align*}
 Note finally, that using the identity \eqref{add:sc:eq3} again, we get
 \begin{align*}
  \Delta_\rho - \sqrt{C}U \Bigl((1-r^\prime)\,I_n\Bigr)^{-1} U^*\sqrt{C} =
  \Delta_\rho - \left(1-\frac{1}{\rho}\right)\sqrt{C}UU^*\sqrt{C} = 0.
 \end{align*}
 and so by lemma \ref{Lemma-Bha} we have that \eqref{add:eq1} holds also true. 

 So by applying Theorem \ref{Gen-Rev-Young}(ii), we get that 
 \begin{align*} 
  \int_{\mathbb R^N} H(V_0 z) \prod_{i=1}^m
      g_{i}^\rho\big( V_i z\big)\,dz 
  &\geq \left(\frac{\det(B)}{\det(V_0BV_0^*)^{\frac{1}{r'}}
        \prod_{i=1}^m\det(V_iBV_i^*)^{\frac{1}{q_i}}}\right)^{\frac{1}{2}}
        \prod_{i=1}^m\|g_i^\rho\|_{q_i}\\
  &= \left(\frac{\rho^{-{(N-n)}}}{1^{\frac{1}{r'}}
     \prod_{i=1}^mp_i^{-\frac{\rho n_i}{p_i}}}\right)^{\frac{1}{2}}
     \prod_{i=1}^mc_i^{\frac{\rho n_i}{2p_i}}\|f_i\|_{p_i}^\rho\\
  &= \Gamma_\rho^\rho\prod_{i=1}^m\|f_i\|_{p_i}^\rho,
 \end{align*}
 where in the last equality we used that $\sum_{i=1}^mc_in_i=n$, $\sum_{i=1}^mn_i=N$ 
 and the change of variables $\|g_i\|_{p_i/\rho}=c_i^{\rho n_i/2p_i}\|f_i\|_{p_i}^\rho$. 
 This proves \eqref{eq.ProofTh5.01}, and thus the left-hand side of \eqref{THEprop-Leh-4}.

 \smallskip

 We now turn to the right-hand side of \eqref{THEprop-Leh-4}. Using the duality relation 
 \eqref{eq.Up-Dual} we have that
 \begin{align*}
  \left( \int_{\mathbb R^{N-n}} 
  \left( \int_{\mathbb R^{n}}F(x,y) \,dx \right)^{\rho} d y \right)^{\frac{1}{\rho}}
   &= \left(\int_{\mathbb R^{N-n}} 
      \left(\int_{\IR^n} \prod_{i=1}^m
      g_i\big(\sqrt{c_i}U_ix + W_iy\big)\,dx\right)^\rho dy\right)^{\frac{1}{\rho}} \\
   &= \norm{\tilde G}_\rho \\
   &= \sup_{\norm{H}_{\rho^\prime}=1}
      \int_{\mathbb R^{N-n}} H(y){\tilde G}(y)\,dx \\
   &= \sup_{\norm{H}_{\rho^\prime}=1}
      \int_{\mathbb R^N} H({\tilde V}_0 z) \prod_{i=1}^m
      g_{i}\big( V_i z\big)\,dz
 \end{align*}
 where ${\tilde V}_0=\bigl(\mathbb{O}_{(N-n)\times n}\;\,I_{N-n}\bigr)$, and 
 $V_i=\bigl(\sqrt{c_i}U_i\;\,W_i\bigr)$, $1\leq i\leq m$, as before. Similarly 
 to the previous case, we will show that the following inequality holds true
 \begin{align} \label{eq.ProofTh5.02}
  \int_{\mathbb R^N} H({\tilde V}_0 z) \prod_{i=1}^m g_{i}\big( V_i z\big)\,dz
  \,\leq\, \Gamma_\rho \,\prod_{i=1}^m\|f_i\|_{p_i},
 \end{align}
 for every $H$ such that $\norm{H}_{\rho^\prime}=1$, and this will prove the 
 right-hand side of \eqref{THEprop-Leh-4}. Now we will use Theorem 
 \ref{Gen-Rev-Young}(i) for the matrices:
 \begin{align}
  {\tilde V} =
  \left(\begin{array}{c}
  {\tilde V}_0\\
  V_1\\
  \vdots\\
  V_m
  \end{array}\right)
  = \left(\begin{array}{cc}
  {\mathbb O}_{(N-n)\times n}  &  I_{N-n} \\
            \sqrt{C}U          &    W     \\
  \end{array}\right)
  \end{align}
  and
  \begin{align}
  {\tilde B} 
  =\left(\begin{array}{cc}
  \rho\,I_{n} & {\mathbb O} \\
   \mathbb O  &  I_{N-n}
  \end{array}\right),\;\; 
  P={\rm diag}\big(\rho^\prime\,I_{N-n}\,,\,
  p_1 I_{n_{1}}\,,\,\ldots\,,\,p_m I_{n_{m}}\big).
 \end{align}
 
 \smallskip
 
 This time we have that
 \begin{equation*}
  {\tilde V}{\tilde B}{\tilde V}^*
  =\left(\begin{array}{cc}
    I_{N-n} & W^* \\
       W    &    {\tilde R}_\rho   \\
  \end{array}\right)
 \end{equation*}
 where ${\tilde R}_\rho=\rho \sqrt{C}UU^*\sqrt{C} + WW^*$. Using the identity 
 \eqref{add:sc:eq3} and the fact $U_iU_i^*=I_{n_i}$, we get
 \begin{align*}
  D_{{\tilde V}{\tilde B}{\tilde V}^*}
   &:={\rm diag}\Big({\tilde V}_0{\tilde B}{\tilde V}_0^*,
     {\tilde V}_1{\tilde B}{\tilde V}_1^*,\ldots, 
     {\tilde V}_m{\tilde B}{\tilde V}_m^*\Big)\nonumber \\
   & ={\rm diag}\biggl(I_{N-n},
      \rho\Bigl(c_1+\frac{1-c_1}{\rho}\Bigr)I_{n_{1}},
      \ldots,\rho\Bigl(c_m+\frac{1-c_m}{\rho}\Bigr)I_{n_{m}}\biggr) \nonumber\\
   & ={\rm diag}\biggl(I_{N-n},
      \frac{\rho}{p_1}I_{n_{1}},\ldots,\frac{\rho}{p_m}I_{n_{m}}\biggr)
 \end{align*}

 To apply Theorem \ref{Gen-Rev-Young}(i), for this set of matrices, we check its 
 assumptions. Note first that
 \[ 
 \frac{N-n}{\rho^\prime} + \sum_{i=1}^m \frac{n_i}{p_i}
 = (N-n)\frac{\rho-1}{\rho} + 
 \sum_{i=1}^m n_ic_i + \sum_{i=1}^m n_i\frac{1-c_i}{\rho} = N,
 \]
 and thus the homogeneity condition \eqref{eq.homogen} holds true. We need also to 
 check that
 \begin{align}\label{add:eq2}
  {\tilde V}{\tilde B}{\tilde V}^*-PD_{{\tilde V}{\tilde B}{\tilde V}^*}
  =\left(\begin{array}{cc}
   (1-\rho^\prime) I_{N-n} & W^* \\
   W    &   {\tilde \Delta}_\rho
  \end{array}\right) \leq 0,
 \end{align}
 where 
 \begin{align*}
 {\tilde\Delta}_\rho 
  &= \rho\,\sqrt{C}UU^*\sqrt{C} + WW^* - 
     {\rm diag}\biggl(\rho\,I_{n_{1}},\ldots,\rho\,I_{n_{m}}\biggr) \\
  &= \sqrt{C}UU^*\sqrt{C} + 
     WW^* - \left(1-\rho\right)\sqrt{C}UU^*\sqrt{C} - \rho\,I_N \\
  &= \left(1-\rho\right)\bigl(I_N-\sqrt{C}UU^*\sqrt{C}\bigr)
   = \left(1-\rho\right)\,WW^*.
 \end{align*}
 Note finally, that using the identity \eqref{add:sc:eq3} again, we get
 \begin{align*}
  {\tilde\Delta}_\rho - W \Bigl((1-\rho^\prime)\,I_n\Bigr)^{-1} W^* =
  {\tilde\Delta}_\rho - (1-\rho)\,WW^*= 0.
 \end{align*}
 and so by lemma \ref{Lemma-Bha} we have that \eqref{add:eq2} holds also true. 
 Applying now Theorem \ref{Gen-Rev-Young}(i), we get that 
 \begin{align*}
  \int_{\mathbb R^N} H({\tilde V}_0 z) \prod_{i=1}^m g_{i}\big( V_i z\big)\,dz
  &\leq \left(\frac{\det({\tilde B})}{\det({\tilde V}_0
        {\tilde B}\,{\tilde V}_0^*)^{\frac{1}{\rho'}}
        \prod_{i=1}^m\det(V_i{\tilde B}\,V_i^*)^{\frac{1}{p_i}}}
        \right)^{\frac{1}{2}}\prod_{i=1}^m\|g_i\|_{{p_i}}\\
  &= \left(\frac{\rho^{n}}{\prod_{i=1}^m 
     \left(\frac{\rho}{p_i}\right)^{\frac{n_i}{p_i}}}\right)^{\frac{1}{2}}
     \prod_{i=1}^mc_i^{\frac{ n_i}{2p_i}}\|f_i\|_{p_i}\\
  &= \Gamma_\rho\prod_{i=1}^m\|f_i\|_{p_i},
 \end{align*}
 where in the last equality we used that $\sum_{i=1}^mc_in_i=n$, 
 $\sum_{i=1}^mn_i=N$ and the change of variables 
 $\|g_i\|_{p_i}=c_i^{n_i/2p_i}\|f_i\|_{p_i}$. This proves \eqref{eq.ProofTh5.02}, 
 and thus the right-hand side of \eqref{THEprop-Leh-4}.

 \smallskip

 The proof of \eqref{THEprop-Leh-5} for $0<\rho\leq 1$ is identical and is omitted.

\smallskip

\noindent {\bf Part II} : The general case.

 Now we are not assuming that $A=I_n$ and $U_iU_i^*=I_{n_i}$. In order to reduce the 
 general case to the previous case, we set ${\widetilde U}_i=\sqrt{A_i} U_i \sqrt{A}$ 
 and let ${\widetilde U}$ be the $N\times n$ matrix with block rows 
 ${\widetilde U}_1,\ldots,{\widetilde U}_m$. Then, by the assumptions $(A1)$ and 
 $(A2)$, one sees that 
 \begin{align*}
  {\widetilde{{U}}}_{i}{\widetilde{{U}}}_{i}^*&=I_{n_i},\,\,\forall i\leq m,\\
  \widetilde{U}^*C\widetilde{U}&=I_n,\\
  \sqrt{C}{\widetilde{{U}}} {\widetilde{{U}}}^{\ast} \sqrt{C} + WW^{\ast} &= I_{N},
 \end{align*}
 where $C:={\rm diag}(c_1I_{n_1},\ldots,c_nI_{n_m})$. 
 We define $h_{i}(x):= f_i( \sqrt{A_i} x)$, $x\in\IR^{n_i}$, and using the
first case, we can apply the left-hand side of \eqref{THEprop-Leh-4}, to 
 $h_i's$ and $\widetilde{U}$, and get that if for example $\rho\geq 1$,
 \begin{align} \label{THEprop-Leh-pr-3}
  \int_{\mathbb R^{n}} \left(\int_{\IR^{N-n}} \prod_{i=1}^m
  h_{i}^{\rho}\biggl( {\widetilde{{U}}}_{i} x +
  \frac{1}{\sqrt{c_{i}}} W_{i} y \biggr)dy\right)^{\frac{1}{\rho}}dx  
  \geq \frac{1}{ \rho^{\frac{N-n}{2\rho}}} \prod_{i=1}^{m}
  (c_{i} p_{i})^{\frac{n_{i}}{2p_{i}}}\|h_{i}\|_{p_{i}}.
 \end{align}
 The change of variables $x\mapsto \sqrt{A}x$ leads to
 \begin{align} \label{THEprop-Leh-pr-4}
    \int_{\mathbb R^{n}} \left(\int_{\IR^{N-n}} \prod_{i=1}^m
     h_{i}^{\rho}\biggl( {\widetilde{{U}}}_{i} x +
     \frac{1}{\sqrt{c_{i}}} W_{i} y \biggr)dy\right)^{\frac{1}{\rho}}dx 
    =\frac{1}{ {\rm det}(A)^{\frac{1}{2}}} \int_{\mathbb R^{n}} 
     \left(\int_{\IR^{N-n}} F^\rho \,dy\right)^{\frac{1}{\rho}}d{x}
 \end{align}
 and again, $x_i\mapsto \sqrt{A_i}\,x_i$ gives
 \begin{equation} \label{THEprop-Leh-pr-5}
  \|h_{i}\|_{p_{i}} = 
  \left(\int_{\mathbb R^{n_i}}f_i^{p_i}
  \bigl(\sqrt{A_i}\,x_i\bigr)\,dx_i\right)^{\frac1{p_i}} 
  = \frac1{{\rm det}(A_i)^{\frac1{2p_i}}} \|f_i \|_{p_i} .
 \end{equation}
 Combining \eqref{THEprop-Leh-pr-3}, \eqref{THEprop-Leh-pr-4} and 
 \eqref{THEprop-Leh-pr-5} implies 
 \begin{align*} \label{THEprop-Leh-pr-6}
   \int_{\mathbb R^{n}} \left(\int_{\IR^{N-n}} F^\rho(x,y)\,dy\right)^{\frac{1}{\rho}}d{x}  
   \geq \frac{{\rm det}( A)^{\frac{1}{2}}}{ \rho^{\frac{N-n}{2\rho}}} \prod_{i=1}^{m}
   \left(\frac{(c_i p_i)^{{n_i}}}{ {\rm det}( A_i)}\right)^{\frac1{2p_i}}\|f_i\|_{p_i}
   =\Gamma_\rho\prod_{i=1}^m\|f_i\|_{p_i}.
 \end{align*}
 The proof for the other inequalities is identical and will be omitted.
\end{proof}

\section{Applications of Theorem \ref{THE prop}}\label{Sec6}


\subsection{Convolution inequalities}\label{sec6.2}

Recall the notations from $(A1)$ and $(A2)$. Assume that $m=2$, $n_{1}=n_{2}=n$, 
$N=2n$ and for any $\lambda \in [0,1]$, consider the following trivial 
decomposition of the identity in $\IR^n$
\[
 \lambda\,I_n + (1-\lambda)\,I_n = I_n.
\]
Set  $c_1=\lambda$, $c_2=1-\lambda$, $U_1=U_2=A=I_n$ and 
$W_1=\sqrt{1-\lambda}I_n$, $W_2=-\sqrt{\lambda} I_{n}$. Then 
a direct computation shows that \eqref{sec6.1:add:eq1} and 
\eqref{THEprop-Leh-2} hold and thus, Theorem \ref{THE prop} reads

\begin{proposition}\label{THEprop-m2} 
 Let $f_1,f_2$ be nonnegative measurable functions on $\mathbb R^{n}$ and 
 $\lambda\in [0,1]$. Define the function 
 \[
 F_\lambda(x,y)=f_1\biggl(x+\sqrt{\frac{1-\lambda}{\lambda}}y\biggr)f_2\,
 \biggl(x-\sqrt{\frac{\lambda}{1-\lambda}}y\biggr), \quad (x,y)\in
 \mathbb{R}^n\times\mathbb{R}^n
 \]
 and for $\rho>0,$ set
 \begin{align*}
  p_1=\frac{\rho}{(\rho-1)\lambda+1},\qquad 
  p_2=\frac{\rho}{(\rho-1)(1-\lambda)+1},
 \end{align*}
 and
 \begin{align*}
  \Im_{\rho}=
  \left( \frac{\lambda^{ \frac{1}{p_1}}(1-\lambda)^{\frac{1}{p_2}}\;\;
  p_1^{\frac{1}{p_1}} p_2^{\frac{1}{p_2}} }{ \rho^{\frac{2}{\rho}} } 
  \right)^{\frac{n}{2}}.
 \end{align*}
 $(i)$ If $\rho \geq 1,$ then
 \begin{equation} \label{sec6.2:eq1}
  \int_{\mathbb{R}^{n}} \left( \int_{\mathbb{R}^n} 
  F_\lambda^\rho(x,y) \,dy\right)^{\frac{1}{\rho}} dx 
  \,\geq\, \Im_\rho \, \|f_1\|_{p_1} \, \|f_2\|_{p_2} \,\geq\,
  \left( \int_{\mathbb{R}^n} \left( \int_{\mathbb{R}^n} 
  F_\lambda(x,y) \,dx \right)^{\rho} dy \right)^{\frac1\rho}.
 \end{equation}
 $(ii)$ If $0\leq \rho\leq 1,$ then
 \begin{equation} \label{sec6.2:eq2}
  \int_{\mathbb{R}^{n}} \left( \int_{\mathbb{R}^n} 
  F_\lambda^\rho(x,y) \,dy\right)^{\frac{1}{\rho}} dx 
  \,\leq\, \Im_\rho \, \|f_1\|_{p_1} \, \|f_2\|_{p_2} \,\leq\,
  \left( \int_{\mathbb{R}^n} \left( \int_{\mathbb{R}^n} 
  F_\lambda(x,y) \,dx \right)^{\rho} dy \right)^{\frac1\rho}.
 \end{equation}
\end{proposition}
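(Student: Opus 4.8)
The plan is to obtain Proposition~\ref{THEprop-m2} directly from Theorem~\ref{THE prop}, specialized to the matrices fixed in the paragraph preceding the statement, so that the only real work is to check that hypotheses $(A1)$ and $(A2)$ are met and then to translate notation. I would take $m=2$, $n_1=n_2=n$, $N=2n$, $U_1=U_2=A=I_n$, $c_1=\lambda$, $c_2=1-\lambda$, and $W_1=\sqrt{1-\lambda}\,I_n$, $W_2=-\sqrt{\lambda}\,I_n$, so that $U$ is the $2n\times n$ matrix stacking $I_n$ over $I_n$ and $W$ the $2n\times n$ matrix stacking $\sqrt{1-\lambda}\,I_n$ over $-\sqrt{\lambda}\,I_n$. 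Then $A_i=U_iAU_i^*=I_n$ and $C_A={\rm diag}(\lambda I_n,(1-\lambda)I_n)$, so $(A1)$ is immediate and $(A2)$ reduces to $U^*C_AU=\lambda I_n+(1-\lambda)I_n=I_n=A^{-1}$, which is exactly the trivial convex decomposition of the identity written at the start of the subsection.

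Next I would check that this $W$ is an admissible choice in \eqref{THEprop-Leh-2}. Working in $2\times2$ blocks over $\IR^n$: the matrix $\sqrt{C_A}\,UU^*\sqrt{C_A}$ has diagonal blocks $\lambda I_n$ and $(1-\lambda)I_n$ and both off-diagonal blocks equal to $\sqrt{\lambda(1-\lambda)}\,I_n$, while $WW^*$ has diagonal blocks $(1-\lambda)I_n$ and $\lambda I_n$ and both off-diagonal blocks equal to $-\sqrt{\lambda(1-\lambda)}\,I_n$; adding them gives $I_{2n}$, which is \eqref{THEprop-Leh-2}. Since $W^*W=(1-\lambda)I_n+\lambda I_n=I_n$, we also get $\mathrm{rank}(W)=n=N-n$. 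With $(A1)$, $(A2)$ and \eqref{THEprop-Leh-2} verified, Theorem~\ref{THE prop} applies verbatim.

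It then remains to rewrite the conclusion. From \eqref{THEprop-3}, $p_i=\bigl(c_i+\tfrac{1-c_i}{\rho}\bigr)^{-1}=\rho/((\rho-1)c_i+1)$, which produces the stated $p_1$ and $p_2$. From the definition of $F$ in Theorem~\ref{THE prop}, $F(x,y)=f_1\bigl(x+\tfrac{1}{\sqrt{c_1}}W_1y\bigr)f_2\bigl(x+\tfrac{1}{\sqrt{c_2}}W_2y\bigr)=f_1\bigl(x+\sqrt{\tfrac{1-\lambda}{\lambda}}\,y\bigr)f_2\bigl(x-\sqrt{\tfrac{\lambda}{1-\lambda}}\,y\bigr)=F_\lambda(x,y)$. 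Finally, substituting $\det A=\det A_i=1$, $n_i=n$ and $N-n=n$ into \eqref{add:eq3} collapses $\Gamma_\rho$ to the explicit constant $\Im_\rho$; this is the one genuinely computational step, and tracking the exponents $\tfrac{n_i}{2p_i}$ together with the power of $\rho$ there is the place I would be most careful. The inequalities (i) for $\rho\geq1$ and (ii) for $0<\rho\leq1$ are then precisely \eqref{THEprop-Leh-4} and \eqref{THEprop-Leh-5}.

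The only point needing a separate word is the degenerate endpoints $\lambda\in\{0,1\}$, where $c_1$ or $c_2$ vanishes so that $(A2)$ fails (indeed $F_\lambda$ itself is not defined there); these can simply be excluded, or recovered by letting $\lambda\to0^+$ or $\lambda\to1^-$, and they carry no content. Beyond the constant-matching, I do not anticipate any real obstacle: once the two block identities above are checked, the proposition is Theorem~\ref{THE prop} read off with $c_1=\lambda$ and $c_2=1-\lambda$.
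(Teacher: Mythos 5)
Your proposal is exactly the paper's own argument: the paragraph immediately preceding Proposition~\ref{THEprop-m2} fixes the very same specialization $m=2$, $n_1=n_2=n$, $U_1=U_2=A=I_n$, $c_1=\lambda$, $c_2=1-\lambda$, $W_1=\sqrt{1-\lambda}\,I_n$, $W_2=-\sqrt{\lambda}\,I_n$ and asserts that the proposition ``reads off'' from Theorem~\ref{THE prop}. Your verifications of $(A1)$, $(A2)$ and \eqref{THEprop-Leh-2}, and the translations of $p_i$ and of $F$ into $F_\lambda$, are correct and actually more explicit than the paper's one-line claim.

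One caveat at the step you flagged as the one to be careful with: carrying out the substitution into \eqref{add:eq3} with $\det A=\det A_i=1$, $n_i=n$, $N-n=n$ gives
\begin{equation*}
\Gamma_\rho=\left(\frac{(\lambda p_1)^{1/p_1}\,\big((1-\lambda)p_2\big)^{1/p_2}}{\rho^{1/\rho}}\right)^{\frac n2},
\end{equation*}
which differs from the displayed $\Im_\rho$ by a factor $\rho^{n/(2\rho)}$, since the latter carries $\rho^{2/\rho}$ rather than $\rho^{1/\rho}$ inside the parentheses. The $2/\rho$ in the statement appears to be a misprint for $1/\rho$: a quick check with $\lambda=\tfrac12$, $\rho=2$ (so $p_1=p_2=\tfrac43$) shows that it is the value above, not the printed $\Im_\rho$, whose normalized form $\Gamma_\rho/(\lambda(1-\lambda))^{n/(2\rho)}$ reproduces the sharp Young constant $C^n$ used after Proposition~\ref{THEprop-conv}. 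So your derivation is sound; had you performed the promised computation you would have found $\rho^{1/\rho}$ rather than $\rho^{2/\rho}$.
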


\medskip

\noindent \textbf{Sharp Young and reverse Young inequalities}. 
By a change of variables, Proposition \ref{THEprop-m2} can also be read as

\begin{proposition} \label{THEprop-conv}
 Let $f_1,f_2$ be non-negative measurable functions on $\mathbb R^{n}$.
 For any $\lambda\in(0,1)$, let $p_1$, $p_2$ and $\Im_\rho$ be as in 
 Proposition \ref{THEprop-m2} and set 
 \[
 \Im_\rho'=\frac{\Im_\rho}{\bigl(\lambda(1-\lambda)\bigr)^{\frac{n}{2\rho}}}.
 \]
 $(i)$ If $\rho\geq 1$ then
 \begin{equation} \label{THEprop-conv-2} 
  \| f_1\ast f_2\|_{\rho} 
  \,\leq\, \Im_\rho'\,\|f_1\|_{p_1}\;\|f_2\|_{p_2} \,\leq\,
  \| (f_1^{\rho} \ast f_2^{\rho})^{\frac1\rho}\|_1.
 \end{equation}
 $(ii)$ If $0\leq \rho\leq 1,$ then
 \begin{equation} \label{THEprop-conv-3}
 \| f_1\ast f_2\|_{\rho} 
  \,\geq\, \Im_\rho'\,\|f_1\|_{p_1}\;\|f_2\|_{p_2} \,\geq\,
  \| (f_1^{\rho} \ast f_2^{\rho})^{\frac1\rho}\|_1.
 \end{equation}
\end{proposition}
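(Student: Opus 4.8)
The plan is to derive Proposition~\ref{THEprop-conv} from Proposition~\ref{THEprop-m2} by changes of variables that turn the two $F_\lambda$-integrals in \eqref{sec6.2:eq1}--\eqref{sec6.2:eq2} into convolution integrals; after that one merely divides the inequality chains of Proposition~\ref{THEprop-m2} by explicit Jacobian constants. Throughout I write $a=\sqrt{(1-\lambda)/\lambda}$ and $b=\sqrt{\lambda/(1-\lambda)}$, so that $a+b=(\lambda(1-\lambda))^{-1/2}$.

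For the convolution $f_1\ast f_2$ on the left of \eqref{THEprop-conv-2}--\eqref{THEprop-conv-3} I would start from the right-most term of \eqref{sec6.2:eq1}. In the inner integral over $x$ the substitution $x\mapsto x+ay$ removes the shift from $f_1$ and evaluates $f_2$ at $x-(a+b)y$, so that
\[
\int_{\mathbb{R}^n}F_\lambda(x,y)\,dx=\int_{\mathbb{R}^n}f_1(u)\,f_2\bigl(u-(a+b)y\bigr)\,du=(f_1\ast\check f_2)\bigl((a+b)y\bigr),
\]
where $\check f_2(z):=f_2(-z)$. Rescaling $y$ by $a+b$ then identifies the right-most term of \eqref{sec6.2:eq1} with $(\lambda(1-\lambda))^{n/(2\rho)}\,\|f_1\ast\check f_2\|_\rho$. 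Inserting this into Proposition~\ref{THEprop-m2} and dividing by $(\lambda(1-\lambda))^{n/(2\rho)}$ gives $\|f_1\ast\check f_2\|_\rho\le\Im_\rho'\,\|f_1\|_{p_1}\|f_2\|_{p_2}$ with $\Im_\rho'=\Im_\rho/(\lambda(1-\lambda))^{n/(2\rho)}$; since this holds for every $f_2$ and $\|\check f_2\|_{p_2}=\|f_2\|_{p_2}$, replacing $f_2$ by $\check f_2$ turns $f_1\ast\check f_2$ into $f_1\ast f_2$ and yields the left inequalities of \eqref{THEprop-conv-2} and \eqref{THEprop-conv-3}.

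For the power convolution $f_1^\rho\ast f_2^\rho$ on the right I would start from the left-most term of \eqref{sec6.2:eq1}. A linear change of variables in the inner integral over $y$ (first $y=\sqrt{\lambda(1-\lambda)}\,w$, then an affine substitution) shows that, for any nonnegative $g_1,g_2$, the quantity $\int_{\mathbb{R}^n}g_1^{\rho}(x+ay)\,g_2^{\rho}(x-by)\,dy$ equals $(\lambda(1-\lambda))^{-n/2}$ times the convolution of $z\mapsto g_1^{\rho}(z/\lambda)$ with $z\mapsto g_2^{\rho}(z/(1-\lambda))$, evaluated at $x$. Since these two dilation scales differ, I would apply Proposition~\ref{THEprop-m2} not to $f_1,f_2$ but to the dilates $g_1(z)=f_1(z/(1-\lambda))$, $g_2(z)=f_2(z/\lambda)$ (for which both scales equal $\lambda(1-\lambda)$), so that the convolution above becomes an explicit dilate of $f_1^{\rho}\ast f_2^{\rho}$. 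Integrating in $x$ then identifies the left-most term of \eqref{sec6.2:eq1} for the $g_i$ with a fixed multiple of $\|(f_1^{\rho}\ast f_2^{\rho})^{1/\rho}\|_1$; expressing $\|g_i\|_{p_i}$ through $\|f_i\|_{p_i}$ produces powers of $\lambda$ and $1-\lambda$ which, once the homogeneity relation $1/p_1+1/p_2=1+1/\rho$ satisfied by the exponents of Proposition~\ref{THEprop-m2} is used, collapse so as to leave the constant $\Im_\rho'$ in front of $\|(f_1^{\rho}\ast f_2^{\rho})^{1/\rho}\|_1$. This gives the right inequalities of \eqref{THEprop-conv-2}--\eqref{THEprop-conv-3}, and the range $0\le\rho\le1$ is handled by the same computation starting from Proposition~\ref{THEprop-m2}(ii), with all inequalities reversed.

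The hard part is the bookkeeping of the Jacobian factors. Because the integrals in Proposition~\ref{THEprop-m2} are nested rather than products, each substitution must be carried out on the inner integral with the outer variable frozen and then followed, where necessary, by a rescaling of the outer variable; I would have to check that the accumulated powers of $\lambda(1-\lambda)$ assemble into precisely $(\lambda(1-\lambda))^{\pm n/(2\rho)}$ in front of the convolutions, and that the auxiliary dilation parameters of the $g_i$ genuinely disappear once $1/p_1+1/p_2=1+1/\rho$ is invoked. Everything else is a routine computation, and no ingredient beyond Proposition~\ref{THEprop-m2} enters.
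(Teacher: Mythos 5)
Your general strategy—recasting the nested integrals of Proposition~\ref{THEprop-m2} as convolutions by substituting in the inner variable and, for the $G_1$-side, by first dilating the input functions—is in line with what the paper intends, and your treatment of the left inequality (deriving $G_3[f_1,\check f_2]=(\lambda(1-\lambda))^{n/(2\rho)}\|f_1\ast\check f_2\|_\rho$ and then replacing $f_2$ by $\check f_2$) is correct as written.

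The right-hand inequality, however, does not come out the way you claim. If you carry out the bookkeeping you deferred, you will find that the overall dilation scale $\sigma$ does cancel thanks to $1/p_1+1/p_2=1+1/\rho$, but the \emph{asymmetric} dilation factors $\alpha_1=\sigma/\lambda$, $\alpha_2=\sigma/(1-\lambda)$ do not: the inequality you obtain is
\[
\|(f_1^\rho\ast f_2^\rho)^{1/\rho}\|_1 \;\geq\; (1-\lambda)^{n(1/p_1-1)}\lambda^{n(1/p_2-1)}\,\Im_\rho'\,\|f_1\|_{p_1}\|f_2\|_{p_2}
\]
for $\rho\geq 1$ (and with the inequality reversed for $\rho\leq 1$). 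Since $1/p_i-1=-(1-1/\rho)(1-\lambda)$ resp.\ $-(1-1/\rho)\lambda$, the prefactor $(1-\lambda)^{n(1/p_1-1)}\lambda^{n(1/p_2-1)}$ equals $1$ only when $\rho=1$; it does \emph{not} ``collapse so as to leave the constant $\Im_\rho'$.'' (A Gaussian test, e.g.\ $n=1$, $\lambda=1/2$, $\rho=2$, $f_1=f_2=e^{-x^2/2}$, shows the ratio $\|(f_1^\rho\ast f_2^\rho)^{1/\rho}\|_1/(\|f_1\|_{p_1}\|f_2\|_{p_2})$ is $2^{3/2}3^{-3/4}$, while $\Im_\rho'=2\cdot3^{-3/4}$; your derivation produces the former as a constant, not the latter.)

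The discrepancy happens to be in a favourable direction: the prefactor is $\geq 1$ exactly when $\rho\geq 1$ and $\leq 1$ exactly when $\rho\leq 1$, so your stronger (in fact sharp) inequality \emph{implies} the one stated in Proposition~\ref{THEprop-conv}. But as written, your argument asserts an identity of constants that is false, and you should replace that assertion with the short verification that the residual factor sits on the correct side of $1$.
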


This is indeed, a reformulation of the sharp Young and reverse Young inequalities. 
To see this, suppose that $p,q,r>0$ satisfy $p^{-1}+q^{-1}=1+r^{-1}$. Choose 
$\rho=r$ and $\lambda=r'/q'$ in Proposition \ref{THEprop-conv}, where $r',q'$ 
are conjugate exponents of $r,q$, respectively. Then $p_1= p$, $p_2= q$ and 
$\Im_\rho'=C^n$, where $C$ is defined in \eqref{sy} and \eqref{rsy}. 
If $p,q,r\geq 1,$ the left-hand side of \eqref{THEprop-conv-2} gives \eqref{sy}, 
while if $0<p,q,r<1,$ the left-hand side of \eqref{THEprop-conv-3} gives \eqref{rsy}.

\medskip

\noindent \textbf{Prekopa-Leindler inequality}. Letting $\rho\rightarrow\infty$ 
in Proposition \ref{THEprop-m2}, the right-hand side of \eqref{sec6.2:eq1} gives 
H\"{o}lder's inequality. As for the left-hand side, Pr\'ekopa-Leindler inequality 
\cite{Lei,Pre}, which is the functional form of the Brunn-Minkowski inequality, 
the cornerstone of the Brunn-Minkowski theory. For more information on this subject, 
we refer the reader to the book  \cite{Sch} and the survey paper of Gardner \cite{Gardner}.

\begin{theorem}[Pr\'ekopa-Leindler's inequality]\label{add:thm1} 
 Let $f,g,h$ be three nonnegative measurable functions in $\mathbb
 R^{n}$ and $\lambda \in [0,1]$ such that
 \begin{equation}
 \label{Pre-Lei-1} h(\lambda x + (1-\lambda) y) \geq f^{\lambda}(x)
 g^{1-\lambda}(y), \ \forall x,y \in \mathbb R^{n} .
 \end{equation}
 Then
 \begin{equation} \label{Pre-Lei-2} 
  \int_{\mathbb R^n} h(x) dx  \geq
  \left(\int_{\mathbb R^{n}} f(x) d x \right)^{\lambda}
  \left(\int_{\mathbb R^{n}} g (x) dx\right)^{1-\lambda} .
 \end{equation}
\end{theorem}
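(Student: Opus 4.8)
The plan is to obtain \eqref{Pre-Lei-2} by letting $\rho\to\infty$ in the \emph{left-most} inequality of \eqref{sec6.2:eq1} in Proposition~\ref{THEprop-m2}, applied to $f_1=f^{\lambda}$ and $f_2=g^{1-\lambda}$. The cases $\lambda\in\{0,1\}$ are trivial, so assume $\lambda\in(0,1)$; likewise, if $\int f=0$ or $\int g=0$ the right-hand side of \eqref{Pre-Lei-2} vanishes and there is nothing to prove, so assume both integrals are positive. By a routine truncation (replace $f,g$ by $f\wedge\nu$, $g\wedge\nu$ restricted to $[-\nu,\nu]^n$; these still satisfy \eqref{Pre-Lei-1}, and their integrals increase to $\int f,\int g$ by monotone convergence), it suffices to treat the case in which $f$ and $g$, hence $f_1$ and $f_2$, are bounded and compactly supported.

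The geometric input is an elementary change of variables. With $c=\sqrt{(1-\lambda)/\lambda}$ and $c'=\sqrt{\lambda/(1-\lambda)}$, one checks directly that $\lambda(x+cy)+(1-\lambda)(x-c'y)=x$ for all $x,y\in\IR^n$, and that for fixed $x$ the affine map $y\mapsto(x+cy,\,x-c'y)$ is a bijection of $\IR^n$ onto $\{(u,v):\lambda u+(1-\lambda)v=x\}$. Hence, for the function $F_\lambda$ of Proposition~\ref{THEprop-m2} built from $f_1=f^\lambda$ and $f_2=g^{1-\lambda}$, hypothesis \eqref{Pre-Lei-1} gives
\[
\sup_{y\in\IR^n}F_\lambda(x,y)=\sup_{\lambda u+(1-\lambda)v=x}f(u)^{\lambda}g(v)^{1-\lambda}\ \le\ h(x),\qquad x\in\IR^n .
\]

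Now I would pass to the limit $\rho\to\infty$ on both sides of the inequality $\int_{\IR^n}\big(\int_{\IR^n}F_\lambda(x,y)^{\rho}\,dy\big)^{1/\rho}dx\ \ge\ \Im_\rho\,\|f_1\|_{p_1}\,\|f_2\|_{p_2}$ supplied by Proposition~\ref{THEprop-m2}(i). For the left-hand side, for each fixed $x$ the inner quantity is an $L^\rho(dy)$–norm, so it converges to $\sup_y F_\lambda(x,y)\le h(x)$; since $F_\lambda$ is bounded with compact support, this convergence is dominated in $x$, so the left-hand side tends to $\int_{\IR^n}\sup_y F_\lambda(x,y)\,dx\le\int_{\IR^n}h(x)\,dx$. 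For the right-hand side, as $\rho\to\infty$ one has $p_1\to1/\lambda$, $p_2\to1/(1-\lambda)$, and $\Im_\rho\to\lambda^\lambda(1-\lambda)^{1-\lambda}\lambda^{-\lambda}(1-\lambda)^{-(1-\lambda)}=1$, while $\|f_1\|_{p_1}=\big(\int f^{\lambda p_1}\big)^{1/p_1}\to(\int f)^{\lambda}$ and $\|f_2\|_{p_2}\to(\int g)^{1-\lambda}$ (here $0<\lambda p_1<1$ with $\lambda p_1\uparrow1$, so this is another dominated/monotone convergence for the bounded, compactly supported $f,g$). Combining the two limits yields $\int h\ \ge\ (\int f)^{\lambda}(\int g)^{1-\lambda}$, which is \eqref{Pre-Lei-2}, and undoing the truncation gives the general case.

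The one step I expect to be the real obstacle is the interchange of $\lim_{\rho\to\infty}$ with the outer integral $\int dx$ on the left of \eqref{sec6.2:eq1}: without first reducing to bounded, compactly supported $f$ and $g$ there is no uniform-in-$\rho$ control of $\big(\int F_\lambda^\rho\,dy\big)^{1/\rho}$ (and $\int h$ or $\iint F_\lambda$ may well be infinite), so that reduction is genuinely needed. Everything else is bookkeeping: the identity $\lambda(x+cy)+(1-\lambda)(x-c'y)=x$, the elementary fact that $L^\rho$–norms increase to the $L^\infty$–norm, and the stated limits of $\Im_\rho,p_1,p_2$.
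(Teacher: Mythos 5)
Your proof is correct and follows essentially the same route as the paper: apply the left-most inequality of \eqref{sec6.2:eq1} in Proposition~\ref{THEprop-m2} to $f_1=f^\lambda$, $f_2=g^{1-\lambda}$, send $\rho\to\infty$, and use the identity $\lambda(x+cy)+(1-\lambda)(x-c'y)=x$ together with \eqref{Pre-Lei-1} to dominate the limit of the inner integral by $h(x)$. You additionally spell out the truncation to bounded, compactly supported $f,g$ and the domination needed to justify interchanging $\lim_{\rho\to\infty}$ with the outer $x$-integral, which the paper leaves implicit; the only small inaccuracy is that the $L^\rho(dy)$-norm converges to the \emph{essential} supremum (and is not monotone in $\rho$ on an infinite-measure space), but since $F_\lambda(x,y)\le h(x)$ holds for \emph{every} $y$ by \eqref{Pre-Lei-1} this does not affect the conclusion.
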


\begin{proof}
 Applying the left-hand side inequality of \eqref{sec6.2:eq1} to $f_1:=f^{\lambda}$ 
 and $f_2:=g^{1-\lambda}$ and sending $\rho$ to $\infty$, we get that
 \begin{align*}
  &\int_{\mathbb R^{n}} {\rm essup}_{y\in \mathbb R^{n}}
   f^{\lambda} \left(x+ \sqrt{\frac{1-\lambda}{\lambda}} y \right)
   g^{1-\lambda} \left(x- \sqrt{\frac{\lambda}{1-\lambda}} y \right)dx 
  &\geq  \left(\int f \right)^{\lambda} \left(\int g \right)^{1-\lambda},
 \end{align*}
 Notice that from \eqref{Pre-Lei-1},
 \begin{align*}
  &{\rm essup}_{y\in \mathbb R^{n}} 
   f^{\lambda} \left(x+\sqrt{\frac{1-\lambda}{\lambda}} y \right) 
    g^{1-\lambda} \left(x-\sqrt{\frac{\lambda}{1-\lambda}} y\right)  \\
  &\leq {\rm essup}_{y\in \mathbb R^n} 
        h\left( 
        \lambda\, \Bigl(x+\sqrt{\frac{1-\lambda}{\lambda}} y \Bigr)
        +(1-\lambda) \Bigl(x-\sqrt{\frac{\lambda}{1-\lambda}} y\Bigr) 
        \right)  = h(x).
 \end{align*}
 This gives \eqref{Pre-Lei-2}.
\end{proof}

\begin{remark}\rm
 The proof of Theorem \ref{add:thm1} actually gives the essential supremum
 strengthened version of the Pr\'ekopa-Leindler's inequality, proved by 
 Brascamp and Lieb in \cite{BL2}, which also avoids problems of measurability. 
 We refer to the Appendix of \cite{BL2} and Section 9 in \cite{Gardner} for 
 more details.
\end{remark}

\subsection{Brascamp-Lieb and Barthe inequalities}\label{sec6.3}

Theorem \ref{THE prop}, without the restriction $m=2$, leads to the Brascamp-Lieb 
and Barthe inequalities by letting again $\rho$ in \eqref{THEprop-Leh-4} tend to 
infinity. 

\begin{theorem} \label{Leh} 
 Assume that $(A1)$ and $(A2)$ in Section \ref{sec6} hold. Then
 \begin{itemize}
  \item[$(i)$]  {\rm(Brascamp-Lieb's inequality)}. 
                For any nonnegative measurable functions $f_{i}$ on 
                $\mathbb{R}^{n_i}$, $i\leq m$, we have that
                \begin{equation} \label{Leh-3}
                 \int_{\mathbb R^{n} }\prod_{i=1}^{m} f_{i}(U_{i}x)\,dx 
                 \,\leq \left(\frac{{\rm det}(A)}{\prod_{i=1}^m
                 {\rm det}(A_i)^{c_i}}\right)^{\frac12}  
                 \prod_{i=1}^m \|f_i \|_{\frac{1}{c_i}}.
                \end{equation}
                The equality holds if  
                \[
                f_{i}(x_i)=\exp(-c_i\langle A_i^{-1} x_i,x_i \rangle),
                \quad i\leq m
                \]                 
  \item[$(ii)$] {\rm(Barthe's inequality)}.
                For any nonnegative measurable functions $f_i$ on 
                $\mathbb{R}^{n_i}$, $i\leq m$ and $f$ on $\mathbb{R}^n$ 
                that satisfy
                \begin{equation} \label{Leh-4}
                 f\left( \sum_{i=1}^{m} c_{i} U_{i}^{\ast} x_{i}\right) 
                 \,\geq \prod_{i=1}^{m} f_{i}(x_{i}), \quad  x_i\in\IR^{n_i}
                \end{equation}
                 we have that
                \begin{equation} \label{Leh-5}
                 \prod_{i=1}^{m} \|f_{i} \|_{\frac{1}{c_{i}}} 
                 \leq \left( \frac{ {\rm det}(A)}{ 
                 \prod_{i=1}^{m} {\rm det} ( A_i)^{c_{i}}}\right)^{\frac{1}{2}}  
                 \int_{\mathbb R^{n}} f (x)  d x.
                \end{equation}
                The equality holds if 
                \[
                f_{i}(x_i)= \exp(-c_i\langle A_i x_i,x_i \rangle/2),\quad i\leq m,
                \]
                and
                \[
                f(x)= \exp(-\langle Ax, x\rangle/2).
                \]
 \end{itemize}
\end{theorem}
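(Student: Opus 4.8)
The plan is to deduce both parts from Theorem \ref{THE prop} in the regime $\rho\geq 1$ by letting $\rho\to\infty$ in \eqref{THEprop-Leh-4}. First I record the relevant limits. By \eqref{THEprop-3}, $p_i=\bigl(c_i(1+\tfrac{1-c_i}{\rho c_i})\bigr)^{-1}\to 1/c_i$ as $\rho\to\infty$, so $\|f_i\|_{p_i}\to\|f_i\|_{1/c_i}$; by a standard truncation and mollification (as in the proof of Theorem \ref{THE prop}, or \cite{BC}) one may assume throughout that the $f_i$ are bounded, continuous and compactly supported, which legitimizes these limits and the manipulations below and makes $y\mapsto\int F(x,y)\,dx$ continuous. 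From \eqref{add:eq3}, using $\rho^{(N-n)/(2\rho)}\to 1$, $(c_ip_i)^{n_i/2p_i}\to 1$ and $(\det A_i)^{1/2p_i}\to(\det A_i)^{c_i/2}$, one gets
\[
 \Gamma_\rho\ \longrightarrow\ \Bigl(\frac{\det(A)}{\prod_{i=1}^m\det(A_i)^{c_i}}\Bigr)^{1/2},
\]
which is precisely the constant in \eqref{Leh-3} and \eqref{Leh-5}. Recall also from the remark following $(A2)$ that $\sum_{i=1}^m c_in_i=n$ and $c_i\leq 1$.

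For part $(i)$ I would use the right-hand inequality of \eqref{THEprop-Leh-4}. Putting $y=0$ in the definition of $F$ gives $F(x,0)=\prod_{i=1}^m f_i(U_ix)$, hence, by continuity of $y\mapsto\int_{\IR^n}F(x,y)\,dx$,
\[
 \liminf_{\rho\to\infty}\Bigl(\int_{\IR^{N-n}}\Bigl(\int_{\IR^n}F(x,y)\,dx\Bigr)^{\rho}dy\Bigr)^{1/\rho}
 \ \geq\ \int_{\IR^n}F(x,0)\,dx\ =\ \int_{\IR^n}\prod_{i=1}^m f_i(U_ix)\,dx .
\]
Combining this with the right-hand inequality of \eqref{THEprop-Leh-4} and letting $\rho\to\infty$ yields \eqref{Leh-3}.

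For part $(ii)$ I would use the left-hand inequality of \eqref{THEprop-Leh-4}, and the crux is the pointwise bound $F(x,y)\leq f(x)$. Writing $x_i:=U_ix+\tfrac{1}{\sqrt{c_i}}\sqrt{A_i}W_iy$ for the argument of $f_i$ in $F$, hypothesis \eqref{Leh-4} gives $F(x,y)=\prod_{i=1}^m f_i(x_i)\leq f\bigl(\sum_{i=1}^m c_iU_i^*x_i\bigr)$, so it suffices that $\sum_i c_iU_i^*x_i=x$ independently of $y$; this I would verify after passing to the geometric normalization. Replace $U_i$ by $\widetilde U_i:=A_i^{-1/2}U_iA^{1/2}$ (so $\widetilde U_i\widetilde U_i^*=I_{n_i}$ and, by \eqref{sec6.1:add:eq1}, $\sum_i c_i\widetilde U_i^*\widetilde U_i=A^{1/2}\bigl(\sum_i c_iU_i^*A_i^{-1}U_i\bigr)A^{1/2}=I_n$), $f_i$ by $h_i:=f_i\circ A_i^{-1/2}$ and $f$ by $\widetilde f:=f\circ A^{-1/2}$, and check that $\widetilde f\bigl(\sum_i c_i\widetilde U_i^*z_i\bigr)\geq\prod_i h_i(z_i)$, i.e.\ the analogue of \eqref{Leh-4} for the new data. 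Since $\sqrt{C}\,\widetilde U=\sqrt{C_A}\,U\sqrt{A}$ with $C:={\rm diag}(c_1I_{n_1},\ldots,c_mI_{n_m})$, the $N\times N$ matrix $(\sqrt{C}\,\widetilde U\,,\,W)$ is, by \eqref{THEprop-Leh-2}, orthogonal, hence has orthonormal columns, so $\sum_i\sqrt{c_i}\widetilde U_i^*W_i=\widetilde U^*\sqrt{C}\,W=0$; consequently, for the normalized data, $\sum_i c_i\widetilde U_i^*\bigl(\widetilde U_ix+\tfrac{1}{\sqrt{c_i}}W_iy\bigr)=x$, and the corresponding function $F$ satisfies $F(x,y)\leq\widetilde f(x)$. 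Then $\bigl(\int_{\IR^{N-n}}F^\rho(x,y)\,dy\bigr)^{1/\rho}\to{\rm ess\,sup}_y F(x,y)\leq\widetilde f(x)$, and integrating in $x$ (dominated convergence, using that $F$ has compact support and $\int\widetilde f<\infty$, the case $\int\widetilde f=\infty$ being trivial) the left-hand side of \eqref{THEprop-Leh-4} tends to $\int_{\IR^n}{\rm ess\,sup}_y F(x,y)\,dx\leq\int\widetilde f$; since $\Gamma_\rho\to 1$ for the geometric data, comparison with the left-hand inequality of \eqref{THEprop-Leh-4} gives $\prod_i\|h_i\|_{1/c_i}\leq\int\widetilde f$. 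Finally the changes of variables $z\mapsto A_i^{1/2}z$ and $w\mapsto A^{1/2}w$ yield $\|h_i\|_{1/c_i}=\det(A_i)^{c_i/2}\|f_i\|_{1/c_i}$ and $\int\widetilde f=\det(A)^{1/2}\int f$, turning the last inequality into \eqref{Leh-5}.

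The main obstacle is the bookkeeping in $(ii)$: one must see that the $y$-variable genuinely drops out of $\sum_i c_iU_i^*x_i$, which is why the detour through the normalization $\widetilde U_i\widetilde U_i^*=I_{n_i}$ is needed (the form of $F$ in Theorem \ref{THE prop} uses $\sqrt{A_i}W_i$, and the orthogonality $\sum_i\sqrt{c_i}\widetilde U_i^*W_i=0$ only becomes available after this normalization), and then one must carry the determinant factors through two changes of variables so that the constant ends up as $\bigl(\det A/\prod_i\det(A_i)^{c_i}\bigr)^{1/2}$ and not its reciprocal. The stated equality cases are checked by substituting the prescribed Gaussians into \eqref{Leh-3} and \eqref{Leh-5} and computing directly (for \eqref{Leh-3}, using Lemma \ref{prop1} exactly as in the equality discussion for Theorem \ref{Gen-Rev-Young}(i), together with the observation that $y\mapsto\int F(x,y)\,dx$ is then maximized at $y=0$); these computations are routine and I would omit them.
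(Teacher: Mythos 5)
Your argument is correct and follows essentially the same strategy as the paper: send $\rho\to\infty$ in \eqref{THEprop-Leh-4}, using $y=0$ for part $(i)$ and the orthogonality coming from \eqref{THEprop-Leh-2} to eliminate the $y$-dependence for part $(ii)$. The one place you genuinely streamline is part $(ii)$: the paper establishes the full \emph{equality} \eqref{proof-Ba-4}, $\sup_y \prod_i f_i(\cdot)=\sup_{\sum c_iU_i^*\xi_i=z}\prod_i f_i(\xi_i)$, and then invokes \eqref{Leh-4}; you instead observe directly that for every fixed $y$ the arguments $x_i=\widetilde U_i x+\tfrac{1}{\sqrt{c_i}}W_iy$ satisfy $\sum_i c_i\widetilde U_i^*x_i=x$ (since $\widetilde U^*\sqrt{C}\,W=0$), so $F(x,y)\leq\widetilde f(x)$ pointwise — this is exactly the ``$\leq$'' half of \eqref{proof-Ba-5} that the paper proves but you skip the unneeded ``$\geq$'' half. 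You also reduce to the geometric normalization $\widetilde U_i=A_i^{-1/2}U_iA^{1/2}$, $A=I_n$, before applying Theorem \ref{THE prop}, whereas the paper applies the general form of Theorem \ref{THE prop} with the substitution $g_i=f_i\circ A_i^{-1}$ and only later does the change of variable $x=Az$; the bookkeeping of determinant factors works out the same either way. Both limit passages ($\|g\|_\rho\to\|g\|_\infty$ on the right and $\liminf\geq g(0)$ on the left) are legitimate under your standing reduction to bounded, continuous, compactly supported $f_i$.
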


\begin{remark}\rm 
 Theorem \ref{Leh} actually retrieves the recent work of Lehec in \cite{Le} 
 and differs from the initial statements of the Brascamp-Lieb and Barthe inequalities. 
 However, the author in \cite{Le} provides an argument on how the initial statements
 can always be recovered from Theorem \ref{Leh}.
 \end{remark}

\begin{proof}[Proof of Theorem \ref{Leh}] 
 The statements that equalities can be realized by the given functions in both cases 
 can be easily verified by direct computations. We will omit this part of the argument. 
 Recall $W$ from \eqref{THEprop-Leh-2}. To show \eqref{Leh-3}, sending $\rho$ in the 
 right-hand side of \eqref{THEprop-Leh-4} to infinity, one gets
 \begin{align*}
 &\sup_{y\in \mathbb R^{N-n}}  \int_{\mathbb R^{n}} 
  \prod_{i=1}^{m} f_{i} \biggl(  {{U}}_{i}x +
  \frac{1}{\sqrt{c_{i}}} \sqrt{A_i}W_{i} y \biggr) dx\\
 &\leq \left( \frac{ {\rm det}(A)}{ \prod_{i=1}^{m} 
  {\rm det} ( A_i)^{c_{i}}}\right)^{\frac{1}{2}}  
  \prod_{i=1}^{m} \|f_{i} \|_{\frac{1}{c_{i}}}.
 \end{align*}
 Note that here we have used the condition \eqref{Leh-1} in the limit 
 $\lim_{\rho\rightarrow\infty}\Gamma_\rho$.
 The inequality \eqref{Leh-3} then follows by observing that
 \[ 
 \int_{\mathbb R^{n}} \prod_{i=1}^{m} f_{i} 
 \big(  {{U}}_{i}x \big) dx \leq  \sup_{y\in \mathbb R^{N-n}}  
 \int_{\mathbb R^{n}} \prod_{i=1}^{m} f_{i}\biggl({{U}}_{i}x +
 \frac{1}{\sqrt{c_{i}}}\sqrt{A_i}W_{i} y \biggr) dx.
 \]
 As for \eqref{Leh-5}, define $g_{i}:\mathbb R^{n_{i}} \rightarrow \mathbb R_{+}$ 
 by $g_{i}(x_i)= f_{i}( A_{i}^{-1} x_i)$. Note 
 $\|g_{i}\|_{p}={\rm det} (A_{i})^{1/p}\|f_{i}\|_{p}$. Applying the left-hand side 
 of \eqref{THEprop-Leh-4} for the functions $g_{i}$'s and then passing to the limit 
 $\rho\rightarrow\infty$ by using \eqref{Leh-1}, we obtain
 \begin{align*}
  &\int_{\mathbb R^{n}} \sup_{y\in \mathbb R^{N-n}}  
   \prod_{i=1}^{m} f_{i}\left( \frac{1}{ \sqrt{c_{i}}}A_{i}^{-\frac{1}{2}} 
   (\sqrt{c_{i}} A_{i}^{-\frac{1}{2}} U_{i} x + W_{i} y ) \right) d x\\
  &\geq{\rm det} (A)^{\frac{1}{2}}  \prod_{i=1}^{m}
   {\rm det}(A_{i})^{\frac{1}{2p_{i}}} \prod_{i=1}^{m} \|f_{i}\|_{p_{i}}
 \end{align*}
 and by the change of variable $x= Az$,
 \begin{align*}
  &\int_{\mathbb R^{n}} \sup_{y\in \mathbb R^{N-n}}  
   \prod_{i=1}^{m} f_{i}\left(\frac{1}{\sqrt{c_{i}}}A_{i}^{-\frac{1}{2}} 
   ( \overline{U}_{i} \sqrt{A} z + W_{i} y ) \right)d z\\
  &\geq    \frac{ \prod_{i=1}^{m}   
   {\rm det} (A_{i})^{\frac{1}{2p_{i}}} }{{\rm det} (A)^{\frac{1}{2}}}
   \prod_{i=1}^{m} \|f_{i}\|_{p_{i}},
 \end{align*}
 where ${\overline{U}}_{i} := \sqrt{c_{i}} A_{i}^{-1/2} U_{i} \sqrt{A} $. 
 From \eqref{Leh-4}, \eqref{Leh-5} will be valid if the following holds
 \begin{equation} \label{proof-Ba-4}
 \sup_{y\in \mathbb R^{N-n}}  \prod_{i=1}^{m} f_{i}
 \left( \frac{1}{ \sqrt{c_{i}}}A_{i}^{-\frac{1}{2}} 
 (\overline{U}_{i} \sqrt{A} z + W_{i} y ) \right) = 
 \sup_{(\xi_1,\ldots,\xi_m):\sum_{i=1}^mc_iU_i^*\xi_i=z} 
 \prod_{i=1}^{m} f_{i} (\xi_{i}).
 \end{equation}
 To show this identity, we first claim that for functions 
 $F_{i}:\mathbb R^{n_{i}}\rightarrow \mathbb R_{+}$ for $i\leq m$, we have
 \begin{equation} \label{proof-Ba-5}
 \sup_{y\in \mathbb R^{N-n}} \prod_{i=1}^{m} F_{i} 
 \left( V_{i} ( x,y)\right) = 
 \sup_{(a_1,\ldots,a_m):\sum_{i=1}^m\overline{U}_i^*a_i=x } 
 \prod_{i=1}^{m} F_{i} (a_{i}).
 \end{equation}
 Recalling \eqref{THEprop-Leh-2}, if we set $V_{i}=( {\overline{U}}_{i},W_{i})$ 
 for $i\leq m$, then the rows of $V_1,\ldots,V_m$ form an orthonormal basis of 
 $\mathbb R^{N}$. Suppose that 
 $a=(a_{1}, \ldots, a_{m})\in\mathbb{R}^{n_1}\times\cdots\times \mathbb{R}^{n_m}$ 
 satisfies $\sum_{i=1}^{m} \overline{U}_{i}^{\ast} a_{i}= x $. If we set 
 $y= \sum_{i=1}^{m} W_{i} a_{i}\in \mathbb R^{N-n}$, then $V_{i}( x,y)= a_{i}$. 
 This proves $\geq$ in \eqref{proof-Ba-5}. The proof of $\leq$ is similar and this 
 completes the proof of \eqref{proof-Ba-5}. Finally, applying \eqref{proof-Ba-5} to 
 \[
 F_i(x_i)=f_i\biggl(\frac{1}{\sqrt{c_i}}A_i^{-1}x_i\biggr)
 \]
 for $x_i\in\mathbb{R}^{n_i},$ we have that
 \begin{align*}
  &\sup_{y\in \mathbb R^{N-n}}  \prod_{i=1}^{m} f_{i}
   \left( \frac{1}{ \sqrt{c_{i}}}A_{i}^{-\frac{1}{2}} 
   (\overline{U}_{i} \sqrt{A} z + W_{i} y ) \right)\\
  & =\sup_{y\in \mathbb R^{N-n}}  \prod_{i=1}^{m} f_{i}
     \left(\frac1{\sqrt{c_i}}A_i^{-\frac12} V_i(\sqrt A z,y)\right)\\
  & =\sup_{(a_1,\ldots,a_m):\sum_{i=1}^m\overline{U}_i^*a_i=
     \sqrt{A}z} \prod_{i=1}^{m}f_{i}\left( \frac{1}
     {\sqrt{c_{i}}}A_{i}^{-\frac{1}{2}}a_{i} \right)\\
  & =\sup_{(\xi_1,\ldots,\xi_m):\sum_{i=1}^mc_iU_i^*\xi_i=z} 
     \prod_{i=1}^{m} f_{i} (\xi_{i}),
 \end{align*}
 where the last equality used change of variables $\xi_i=c_i^{-1/2}A_i^{-1/2}a_i$. 
 This gives \eqref{proof-Ba-4} and we are done.
\end{proof}

\subsection{An entropy inequality}\label{sec6.4}

Finally we derive entropy inequalities for probability density functions. 
Let $f$ be a positive measurable function in $\mathbb R^n$. We define the entropy 
of $f$ by
\begin{equation*}
 {\rm Ent}(f):=
 \int_{\mathbb R^{n}} f(x)\log{f}(x)\,dx - 
 \left( \int f(x)dx \right) \log{\int f(x)dx},
\end{equation*}
whenever this quantity makes sense. Note that if $g(p):=\|f\|_{{p}},$ then
\begin{equation} \label{ent-der} 
 g^{\prime}(1) = {\rm Ent}(f) . 
\end{equation}
Let $F$ and $\Gamma_{\rho}$ be defined as in Theorem \ref{THE prop} and set the 
functions $G_{1}, G_{2}, G_{3}$ on $\mathbb [0,\infty)$ by
\begin{align*}
 G_1(\rho)&= \int_{\mathbb R^{n}} \left(\int_{\IR^{N-n}} 
              F^\rho(x,y)\,dy\right)^{\frac{1}{\rho}}dx, \\
 G_{2}(\rho)&=\Gamma_\rho\prod_{i=1}^m\|f_{i}\|_{p_{i}},\\
 G_{3}(\rho)&=\left( \int_{\mathbb R^{N-n}}\left(\int_{\mathbb R^{n}}
              F(x,y)\,dx \right)^{\rho} dy \right)^{\frac{1}{\rho}}.
\end{align*}
Note that Fubini's theorem implies that $G_{1}(1)= G_{2}(1)= G_{3}(1)$ and Theorem 
\ref{THE prop} states that 
\[
 G_{1}(\rho) \leq G_{2}(\rho) \leq G_{3}(\rho), \quad {\rm if} \ \rho\leq 1,
\]
and
\[
  G_{1}(\rho) \geq G_{2}(\rho) \geq G_{3}(\rho), \quad {\rm if} \ \rho\geq 1.
\]
Putting all these together gives
\begin{equation}
\label{ent-der-1}
G_{3}^{\prime} (1) \leq G_{2}^{\prime} (1) \leq G_{1}^{\prime} (1), 
\end{equation}
which leads to the following entropy inequalities.

\begin{proposition} \label{prop-der}
 Assume that $(A1)$ and $(A2)$ hold and $W$ satisfies \eqref{THEprop-Leh-2}. 
 For any probability density $g_i$ on $\mathbb{R}^{n_i}$, $i\leq m$, set
 \begin{equation} \label{ent-G}
  G(x,y)= \prod_{i=1}^{m} g_{i} \left( \sqrt{c_{i}}U_{i} x + W_{i}y\right)
 \end{equation}
 Then 
 \begin{equation} \label{ent-0}
  D_1\,{\rm Ent}\Bigl(\int_{\IR^n} G(x,\cdot)\,dx\Bigr)  
  \,\leq\, \sum_{i=1}^{m} (1-c_{i})\,{\rm Ent}(g_{i}) + D_2 \,\leq\, 
  D_1\int_{\IR^n} {\rm Ent}\bigl(G(x,\cdot)\bigr)\,dx, 
 \end{equation}
 where 
 \begin{align*}
  D_1:=\left( \frac{ \prod_{i=1}^{m}  
  {\rm det}(A_{i})}{{\rm det}(A)} \right)^{\frac{1}{2}}
  \quad{\rm and}\quad\;      
  D_2:=\frac{1}{2}\sum_{i=1}^m(1-c_i)\log\det(A_i).
 \end{align*}
\end{proposition}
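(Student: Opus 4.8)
The plan is to differentiate, at the crossover point $\rho=1$, the one--parameter family of inequalities in Theorem \ref{THE prop}. First I would apply Theorem \ref{THE prop} with the functions $f_i$ obtained from the given probability densities $g_i$ by a linear rescaling (the same device that passes from the geometric form to the general form in Part II of the proof of Theorem \ref{THE prop}), chosen so that its function $F$ becomes, after an affine change of variables in $(x,y)$, an affine reparametrization of the function $G$ of \eqref{ent-G} with a constant Jacobian. The hypotheses $(A1)$, $(A2)$ and \eqref{THEprop-Leh-2} needed to invoke Theorem \ref{THE prop} for this rescaled data follow from those assumed in the proposition by the linear algebra already recorded there, in particular the remark following $(A2)$, which yields $\sum_i c_in_i=n$ together with $UAU^*\le C_A^{-1}$. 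With $F$ and $\Gamma_\rho$ so identified, $G_1,G_2,G_3$ are exactly the three quantities introduced in subsection \ref{sec6.4}; by Fubini $G_1(1)=G_2(1)=G_3(1)$, and Theorem \ref{THE prop} gives $G_1\le G_2\le G_3$ on $(0,1]$ and the reverse chain on $[1,\infty)$.

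Consequently $G_1-G_2$ and $G_2-G_3$ vanish at $\rho=1$, are $\le0$ just to the left of $1$ and $\ge0$ just to the right; taking one--sided derivatives at $\rho=1$ gives $(G_1-G_2)'(1)\ge0$ and $(G_2-G_3)'(1)\ge0$, i.e. the inequality \eqref{ent-der-1}. To justify the differentiation I would first reduce, exactly as in the proof of Theorem \ref{THE prop}, to the case of $g_i$ smooth and bounded above and away from zero on compact sets, so that differentiating under the integral sign at $\rho=1$ is legitimate by dominated convergence; the general statement then follows by truncation and monotone limits, within the range where all the entropies are defined.

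It remains to evaluate the three derivatives. The key elementary fact is \eqref{ent-der}: $\frac{d}{dp}\|f\|_p\big|_{p=1}={\rm Ent}(f)$ for any positive integrable $f$. Writing $G_1(\rho)=\int_{\IR^n}\|F(x,\cdot)\|_{L^\rho(\IR^{N-n})}\,dx$ we get $G_1'(1)=\int_{\IR^n}{\rm Ent}\big(F(x,\cdot)\big)\,dx$, which after undoing the change of variables equals $D_1\int_{\IR^n}{\rm Ent}(G(x,\cdot))\,dx$; writing $G_3(\rho)=\big\|\,y\mapsto\int_{\IR^n}F(x,y)\,dx\,\big\|_{L^\rho}$ gives $G_3'(1)={\rm Ent}\big(y\mapsto\int_{\IR^n}F(x,y)\,dx\big)$, which equals $D_1\,{\rm Ent}\big(\int_{\IR^n}G(x,\cdot)\,dx\big)$ by the $1$--homogeneity of ${\rm Ent}$. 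For $G_2(\rho)=\Gamma_\rho\prod_i\|f_i\|_{p_i}$ with $p_i=p_i(\rho)$ as in \eqref{THEprop-3}, the chain rule, the computation $\frac{dp_i}{d\rho}\big|_{\rho=1}=1-c_i$, and a direct differentiation of $\log\Gamma_\rho$ in \eqref{add:eq3} produce $\sum_i(1-c_i){\rm Ent}(g_i)+D_2$; here the terms coming from $\rho^{-(N-n)/(2\rho)}$ and $(c_ip_i)^{n_i/(2p_i)}$ cancel precisely because $\sum_i c_in_i=n$ and $\sum_i n_i=N$, while the $\log\det A_i$ contributions assemble into $D_2$. Feeding these three evaluations into \eqref{ent-der-1} yields \eqref{ent-0}.

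The main obstacle is the bookkeeping in the first and third paragraphs rather than any conceptual point: one must fix the precise affine substitution relating $F$ to $G$, keep track of its Jacobian and of any $\rho$--dependence it introduces into $G_1$ and $G_3$, and verify that the Jacobians of the $f_i\leftrightarrow g_i$ rescalings, the $\log c_i$ terms from $\Gamma_\rho$, and the $\log\det A_i$ terms collapse to exactly $D_1$ and $D_2$ with no spurious surviving constants. The differentiability at $\rho=1$ is routine, but it should be invoked with the smoothing and truncation step already in place, since $F$ need not be integrable without it.
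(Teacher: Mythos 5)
Your plan follows the paper's proof exactly: identify the one-parameter family $G_1,G_2,G_3$ from subsection 6.4, exploit the fact that Theorem~\ref{THE prop} gives $G_1\le G_2\le G_3$ on $(0,1]$ and the reverse chain on $[1,\infty)$ with equality at $\rho=1$, deduce the derivative chain \eqref{ent-der-1}, and evaluate the three derivatives at $\rho=1$ via \eqref{ent-der}. This is the paper's argument.

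One caveat about the bookkeeping: the paper places the factor $D_1^{-1}$ entirely on $G_2'(1)$ (i.e.\ $G_2'(1)=D_1^{-1}\bigl(\sum_i(1-c_i){\rm Ent}(g_i)+D_2\bigr)$), with $G_1'(1)=\int{\rm Ent}\bigl(G(x,\cdot)\bigr)\,dx$ and $G_3'(1)={\rm Ent}\bigl(\int G(x,\cdot)\,dx\bigr)$ written with no constant, and then multiplies the resulting chain by $D_1$. Your proposal instead writes $G_1'(1)=D_1\int{\rm Ent}(G(x,\cdot))\,dx$ and $G_2'(1)=\sum_i(1-c_i){\rm Ent}(g_i)+D_2$, i.e.\ you shift a factor $D_1$ from $G_2$ onto $G_1$ and $G_3$. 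Since the shift is uniform, \eqref{ent-0} still comes out, but the intermediate identities you assert are off from the paper's by $D_1$, and the ``affine change of variables in $(x,y)$ with a constant Jacobian'' you invoke to move from $F$ to $G$ does not actually exist in general: replacing $\sqrt{A_i}W_i$ by $W_i$ blockwise is not a linear substitution that respects the $x$--$y$ splitting. The paper sidesteps this by tacitly taking $G$ to be the function $F$ itself (equivalently, including the $\sqrt{A_i}$ factors inside $G$); with that identification the $G_1'(1)$ and $G_3'(1)$ formulas are immediate and no extra Jacobian enters. You should either adopt that identification or verify that your proposed substitution genuinely produces the claimed $D_1$ factors and no additional $\log\det$ terms, since a naive $y\mapsto By$ substitution inside ${\rm Ent}$ generates an extra additive $\frac{\log|\det B|}{|\det B|}\int h$ term that your sketch does not track.
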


\begin{proof} The idea of the proof is to compute the derivatives of $G_1$, $G_2$ 
 and $G_3$ at $\rho=1$. One shall see that they lead to the three quantities in 
 \eqref{ent-0} and the inequalities are preserved through \eqref{ent-der-1}. 
 Note first that from \eqref{ent-der}, we have that 
 $G_1'(1)=\int_{\mathbb{R}^n}{\rm Ent}\bigl(G(x,\cdot)\bigr)\,dx$ 
 and $G_3'(1)={\rm Ent}\bigl(\int_{\IR^n} G(x,\cdot)\,dx\bigr)$. 
 As for $G_2'(1)$, recalling that $\Gamma_\rho$ from \eqref{THE prop} 
 and defining $\Omega_\rho= \prod_{i=1}^{m} \|f_{i}\|_{p_{i}}$, we get 
 directly by definition that
 \begin{align*}
  \Gamma_1&=\left(\det(A)\prod_{i=1}^m 
  \frac{c_i^{n_i}}{\det(A_i)}\right)^{\frac{1}{2}},
  \,\,\Omega_1=\prod_{i=1}^m\|f_i\|_1,
 \end{align*}
 and a quite tedious computation yields
 \begin{align*}
  \left.\frac{d\Gamma_\rho}{d\rho}\right|_{\rho=1}
   &={\det(A)}^{\frac{1}{2}}\left(\prod_{i=1}^m
     \frac{c_i^{n_i/2}}{\sqrt{\det(A_i)}}\right)
     \left(\sum_{i=1}^m\frac{(1-c_i)n_i}{2}
     \left[\frac{\log\det(A_i)}{n_i}-\log c_i\right]\right),\\
     \left.\frac{d\Omega_\rho}{d\rho}\right|_{\rho=1}
   &=\left( \prod_{i=1}^{m} \|f_{i}\|_{1}\right) 
     \left( \sum_{i=1}^{m} (1-c_{i}) 
     \frac{{\rm Ent} (f_{i})}{ \|f_{i}\|_{1}} \right).
 \end{align*}
 Combining these all together gives 
 \begin{align*}
  G_2'(1)&=\left( \prod_{i=1}^{m} \|f_{i}\|_{1}\right) 
           \left( {\rm det}(A) \prod_{i=1}^{m} 
           \frac{c_{i}^{n_{i}}}{  {\rm det}(A_{i})} 
           \right)^{\frac{1}{2}} \\
         &\times \left( \sum_{i=1}^{m} \frac{n_{i}(1-c_{i})}{2} 
         \left( \frac{ 2{\rm Ent}(f_{i})}{ n_{i} \|f_{i}\|_{1}}+
         \frac{\log{{\rm det}(A_{i})}}{n_{i}}-\log{c_{i}}\right) \right).
 \end{align*}
 Set $f_{i}(x):= g_{i}(\sqrt{c_{i}}x)$. Observe that 
 \begin{align*}
  c_{i}^{\frac{n_{i}}{2}}\int f_i&=\int g_i=1
  \quad{\rm and}\quad
  c_{i}^{\frac{n_{i}}{2}} {\rm Ent} (f_{i}) 
  = {\rm Ent}(g_{i}) + \frac{n_{i}}{2} \log{c_{i}}.
 \end{align*}
 So
 \begin{align*}
  \frac{2{\rm Ent}(f_i)}{ n_i \|f_{i}\|_1}
  = \log{c_i}+\frac{2}{n_i} {\rm Ent}(g_i)
 \end{align*}
 and thus,
 \begin{align*}
  G_{2}^{\prime} (1)
   &= \left( \frac{{\rm det}(A)}{ \prod_{i=1}^{m}  
      {\rm det}(A_{i})} \right)^{\frac{1}{2}}
      \left(\sum_{i=1}^m(1-c_i){\rm Ent}(g_i)+
      \frac{1}{2}\sum_{i=1}^m(1-c_i)\log \det(A_i)\right)\\
   &= D_1^{-1}\sum_{i=1}^m(1-c_i)\mbox{Ent}(g_i)+D_1^{-1}D_2.
 \end{align*}
 Using our computations for $G_1'(1),$ $G_2'(1)$ and $G_3'(1),$ \eqref{ent-der-1} completes our proof.
\end{proof}

\begin{Acknowledgements}
 It is our pleasure to thank Dario Cordero-Erausquin, Petros Valettas and Joel Zinn for
 useful discussions, Dmitry Panchenko for many valuable remarks and his encouragement
 to put forth this work and Assaf Naor for bringing the results of \cite{MOS} to
 our attention. Special thanks are due to Franck Barthe and Pawel Wolff for informing 
 us that a similar result as Theorem \ref{THE prop} have also been obtained in their 
 recent research \cite{BW}. 
 
 \smallskip
 
 The second author is supported by the action Supporting Postdoctoral Researchers 
 of the operational program Education and Lifelong Learning (Action's Beneficiary: 
 General Secretariat for Research and Technology) and is co-financed by the European 
 Social Fund (ESF) and the Greek State. 
 
 \smallskip
 
 The last author is supported by the A. Sloan foundation, BSF grant 2010288 and the 
 US NSF grant CAREER-1151711.
\end{Acknowledgements}




\begin{thebibliography}{100}
\footnotesize


\bibitem
{Ball1} {\rm K. M. Ball}. (1989) \textit{Volumes of Sections of Cubes and 
Related Problems}. Lecture Notes in Math., {\bf 1376}, pp. 251-260, Springer, 
Berlin.

\bibitem
{Ball2} {\rm K. M. Ball.} (1991) \textit{Volume ratio and a reverse isoperimetric inequality}.
J. Lond. Math. Soc., {\bf s2-44}, no. 2, pp. 351-359.

\bibitem
{B} {\rm F. Barthe}. (1998) \textit{On a reverse form of the Brascamp-Lieb inequality}. Invent.
Math., {\bf 134}, pp. 335-361.



\bibitem
{BAYoung}
{\rm F. Barthe}. (1998) \textit{Optimal Young's inequality and its converse: a simple proof}. Geom. Funct. Anal. {\bf 8}, no. 2, pp. 234-242.

\bibitem
{BaArxiv}
{\rm F. Barthe}. (1998) \textit{On a reverse form of the Brascamp-Lieb inequality}.  arXiv:math/9705210.



\bibitem
{BC} {\rm F. Barthe and D. Cordero-Erausquin}. (2004) \textit{Inverse Brascamp-Lieb inequalities
along the Heat equation}. Lecture Notes in Math., {\bf 1850}, pp. 65-71, Springer, Berlin.

\bibitem
{BCLM} {\rm F. Barthe, D. Cordero-Erausquin, M. Ledoux and B. Maurey}. (2011)
\textit{Corellation an d Brascamp-Lieb inequalities for Markov semigroups}. Int. Math. Res.
Not., {\bf 10}, pp. 2177-2216.

\bibitem
{BH} {\rm F. Barthe and N. Huet}. (2009) \textit{On Gaussian Brunn-Minkowski inequalities}.
Studia Math., {\bf 191}, no. 3, pp. 283-304.

\bibitem{BW} {\rm F. Barthe and P. Wolff}. (2013) \textit{Private communication}.

\bibitem{Bha} {\rm R. Bhatia} (2007) \textit{ Positive definite matrices}, Princeton University Press.

 
\bibitem
{Bec} {\rm W. Beckner}. (1975) \textit{Inequalities in Fourier analysis}. Ann. of Math.,
{\bf 102}, pp. 159-182.


\bibitem
{BBC} {\rm J. Bennett, N. Bez and A. Carbery}. (2009) \textit{Heat-flow monotonicity related
to the Hausdorff-Young inequality}. Bull. Lond. Math. Soc., {\bf 41}, no. 6, pp. 971-979.

\bibitem
{BCCT} {\rm J. Bennett, A. Carbery, M. Christ and T. Tao}. (2008) \textit{The Brascamp-Lieb
inequalities: Finiteness, structure and extremals}.
Geom. Funct. Anal., {\bf 17}, no. 5, pp. 1343-1415.

\bibitem
{Bon} {\rm A. Bonami}. (1970) \textit{\'{E}tude des coefficients de Fourier des functions de
$L^p(G)$}. Ann. Inst. Fourier (Grenoble), {\bf 20}, no. 2, pp. 335-402.

\bibitem
{Bor} {\rm C. Borell}. (1982) \textit{Positivity improving operators and hypercontractivity}.
Math. Z., {\bf 180}, pp. 225-234.

\bibitem
{BLL} {\rm H. J. Brascamp, E. H. Lieb, and J. M. Luttinger}. (1974) \textit{A general
rearrangement inequality for multiple integrals}. J. Funct. Anal., {\bf 17}, pp. 227-237.

\bibitem
{BL} {\rm H. J. Brascamp and E. H. Lieb}. (1976) \textit{Best constants in Young's
inequality, its converse, and its generalization to more than three functions}.
Adv. Math., {\bf 20}, pp. 151-173.

\bibitem{BL2} {\rm H. J. Brascamp and E. H. Lieb}. (1976) \textit{On Extensions of the Brunn-Minkowski and Prekopa-Leindler Theorem, Including Inequalities for Log Concave functions, and with an application to the diffuion equation}, Journal of functional analysis, {\bf 22}, 366-389.

\bibitem
{C} {\rm A. Carbery.} (2007) \textit{The Brascamp-Lieb inequalities: Recent Developments}.
Nonlinear Analysis, Function Spaces and Applications, {\bf 8}, pp. 9-34, Czech Academy of
Sciences, Mathematical Institute, Praha.

\bibitem
{CC} {\rm E. A. Carlen and D. Cordero-Erausquin}. (2009) \textit{Subadditivity of the entropy
and its relation to Brascamp-Lieb type inequalities}. Geom. Funct. Anal., {\bf 19}, no. 2, pp.
373-405.

\bibitem
{CLL} {\rm E. A. Carlen, E. H. Lieb and M. Loss}. (2004) \textit{A sharp analog of Young's
inequality on $S^N$ and related entropy inequalities}. J. Geom. Anal., {\bf 14}, no. 3, pp.
487-520.



\bibitem
{Chen11}
{\rm W.-K. Chen.} (2012) \textit{Disorder chaos in the Sherrington-Kirkpatrick model with
external field}. To appear in {\it Ann. Probab.}

\bibitem
{CT} {\rm T. Cover and J. Thomas} (2006) \textit{Elements of information theory}. Second Edition, Wiley \& sons.

\bibitem{Gardner} {\rm R. Gardner} (2002) \textit{The Brunn-Minkowski inequality}. Bull. Amer. Math. Soc. {\bf 39}, pp. 355-405

\bibitem
{Garl}{\rm D. J. Garling}. (2007) \textit{Inequalities. A journey into Linear Analysis}.
Cambridge University Press, Cambridge.

\bibitem
{Gross} {\rm L. Gross}. (1975) \textit{Logarithmic Sobolev inequalities}. Amer. J. Math.,
{\bf 97}, no. 4, pp. 1061-1083.


\bibitem
{MOS} {\rm E. Mossel, K. Oleszkiewicz and A. Sen}. (2013) \textit{On reverse Hypercontractivity}.
To appear in Geom. Funct. Anal.


\bibitem
{Nel2} {\rm E. Nelson}. (1973) \textit{The free Markov field}. J. Funct. Anal., {\bf 12}, pp. 211-227.

\bibitem
{Le} {\rm J. Lehec}. (2013) \textit{Short probabilistic proof of the Brascamp-Lieb and Barthe theorems}.
Preprint available at ArXiv:1302.2066.

\bibitem{Lei} {\rm L. Leindler}. (1972) \textit{On a certain converse of H\"older inequality}. II. Acta Sci. Math. Szeged, {\bf 33}, 217-223.

\bibitem{Lieb} {\rm E. H. Lieb}. (1978) \textit{Proof of an Entropy conjecture of Wehrl}. Commun. math. Phys. 62, pp. 35-41.

\bibitem
{Lieb1} {\rm E. H. Lieb}. (1990) {\textit{Gaussian kernels have only Gaussian maximizers}}. Invent.
Math., {\bf 102}, pp. 179-208.

 \bibitem{Pre} {\rm A. Prekopa}. (1973) \textit{ On logarithmic concave measures and functions}. Acta Scient. Math, {\bf 34}, pp. 335-343.
 
\bibitem{Sch} {\rm R. Schneider}. (1993) \textit{Convex Bodies: The Brunn-Minkowski Theory}, Cambridge University Press, Cambridge.

\bibitem
{Tal11}
M. Talagrand. (2011) {\it Mean Field Models for Spin Glasses.} Ergebnisse der Mathematik und ihrer
Grenzgebiete. 3. Folge. A Series of Modern Surveys in Mathematics, {\bf 54}, {\bf 55}, Springer, Berlin.

\end{thebibliography}
\end{document}